\newcommand{\BS}{\mathfrak S}
\let\Sym=\BS
\newcommand{\Z}{\mathbb Z}
\newcommand{\HH}{\mathscr{H}}
\let\H=\HH
\newcommand{\calD}{{\mathcal D}}
\renewcommand\O{{\mathcal{O}}}
\newcommand{\eps}{\varepsilon}
\newcommand\blam{{\boldsymbol\lambda}}
\newcommand\bmu{{\boldsymbol\mu}}
\DeclareMathOperator\comp{comp}
\DeclareMathOperator\Shape{Shape}
\DeclareMathOperator\Std{Std}
\newcommand\s{\mathfrak{s}}
\renewcommand\t{\mathfrak{t}}
\renewcommand\u{\mathfrak{u}}
\renewcommand\v{\mathfrak{v}}
\let\gedom=\trianglerighteq
\let\gdom=\vartriangleright
\DeclareMathOperator{\ind}{\!\uparrow}
\DeclareMathOperator{\res}{\!\downarrow}
\newcommand{\bQ}{\mathbf{Q}}
\renewcommand{\b}{\mathbf{b}}
\renewcommand{\c}{\mathbf{c}}
\newcommand{\bt}{\mathbf{t}}
\DeclareMathOperator{\Hom}{Hom}
\DeclareMathOperator{\Irr}{Irr}
\DeclareMathOperator{\End}{End}
\newcounter{main}
\theoremstyle{plain}
\newtheorem*{Notation}{Notation}
\newtheorem{Theorem}[main]{Theorem}
\numberwithin{equation}{section}
\newtheorem{prop}[equation]{Proposition}
\newtheorem{thm}[equation]{Theorem}
\newtheorem{cor}[equation]{Corollary}
\newtheorem{lem}[equation]{Lemma}
\newtheorem{dfn}[equation]{Definition}
\theoremstyle{remark}
\newtheorem{Remark}[equation]{Remark}
\def\map#1#2{\,{:}\,#1\!\longrightarrow\!#2}
\def\And{\text{ and }}
{\catcode`\|=\active
  \gdef\set#1{\mathinner{\lbrace\,{\mathcode`\|"8000%
                                   \let|\midvert #1}\,\rbrace}}
}
\def\midvert{\egroup\mid\bgroup}
\title[Morita equivalences of cyclotomic Hecke algebras]%
   {Morita equivalences of cyclotomic Hecke algebras of type $G(r,p,n)$}
\subjclass[2000]{20C08, 20C30}
\keywords{Cyclotomic Hecke algebras, Morita equivalence}
\author{Jun Hu}
\address{Department of Applied Mathematics,
Beijing Institute of Technology,
Beijing, 100081, P.R. China}
\email{junhu303@yahoo.com.cn}
\author{Andrew Mathas}
\address{School of Mathematics and Statistics F07,
University of Sydney, NSW 2006, Australia}
\email{a.mathas@usyd.edu.au}
\begin{document}
\bibliographystyle{andrew}

\begin{abstract}
We prove a Morita reduction theorem for the cyclotomic Hecke
algebras $\HH_{r,p,n}({q,\bQ})$ of type $G(r,p,n)$ with $p>1$ and
$n\geq 3$. As a consequence, we show that computing the
decomposition numbers of $\HH_{r,p,n}(\bQ)$ reduces to computing the
\textit{$p'$-splittable decomposition numbers} (see
Definition~\ref{split}) of the cyclotomic Hecke algebras
$\HH_{r',p',n'}(\bQ')$, where $1\leq r'\leq r$, $1\leq n'\leq n$, $
p'\mid p$ and where the parameters~$\bQ'$ are contained in a single
$(\varepsilon',q)$-orbit and $\varepsilon'$ is a primitive $p'$th root of
unity.
\end{abstract}

\maketitle

\begin{center}\it
  Dedicated to Toshiaki Shoji on the occasion of his sixtieth birthday.
\end{center}

\section{Introduction}
Motivated by ``generic features'' of the representation theory of
finite reductive groups Brou\'e and Malle~\cite{BM:cyc} attached a
\textit{cyclotomic Hecke algebra} to each complex reflection group. These
algebras have many good properties and, conjecturally, they arise as
the endomorphism algebras of Deligne--Lusztig representations.

This paper is concerned with the cyclotomic Hecke algebras of type
$G(r,p,n)$ with $p>1$ and $n\geq 3$. These algebras were first
considered by Brou\'e and Malle~\cite{BM:cyc} and by
Ariki~\cite{Ariki:hecke} in the semisimple case. These algebras have
been studied extensively in the non--semisimple case, notably by the
first author~\cite{Hu:ModGppn,Hu:simpleGrpn,Hu:CrystalGppn,Hu:Grpn}
and by Genet and Jacon~\cite{GenetJacon}. In particular, the simple
modules of these algebras have been classified over any field of
characteristic coprime to $p$~\cite{Hu:simpleGrpn}.

In the case $p=1$ the cyclotomic Hecke algebras of type $G(r,1,n)$ are
known as the Ariki--Koike algebras. These algebras are well
understood; see ~\cite{m:cyclosurv} and the references therein. The
highlight of this theory is Ariki's celebrated theorem which says that
the decomposition numbers of these algebras in characteristic zero can
be computed using the canonical bases of the higher level Fock spaces
for the quantized affine special linear groups. Another fundamental result for
the Ariki--Koike algebras is the Morita equivalence theorem of Dipper
and the second author~\cite{DM:Morita} which says that, up to Morita
equivalence, these algebras are determined by the $q$--orbits of their
parameters.

The first main result in this paper gives an analogue of the
Dipper--Mathas Morita equivalence theorem for the Hecke algebras of
type $G(r,p,n)$. To state this result explicitly, fix positive
integers $r$, $p$ and $n$ with $r=pt$, for some integer~$t$, and let
$K$ be an algebraically closed field of characteristic coprime to
$p$. Fix parameters $q,Q_1,\dots,Q_t\in K^\times$ and let
$\bQ=(Q_1,\dots,Q_t)$. Let $\HH_{r,n}(\bQ)$ be the Ariki--Koike
algebra and let $\HH_{r,p,n}(\bQ)$ be the Hecke algebra of type
$G(r,p,n)$ with parameters~$q$ and $\bQ$. The algebra
$\HH_{r,n}(\bQ)$ is equipped with an automorphism $\sigma$ of order
$p$ and $\HH_{r,p,n}(\bQ)$ is the fixed point subalgebra of
$\HH_{r,n}(\bQ)$ under $\sigma$. There is a second automorphism
$\tau$ on $\HH_{r,n}$ which fixes $\HH_{r,p,n}$ setwise. For the
precise definitions see the third paragraph in Section 2,
Definition~\ref{dfn21} and Definition~\ref{sigma tau}.

Fix a primitive $p$th root of unity $\eps$ in $K$ and say that $Q_i$
and $Q_j$ are in the \textbf{same $(\eps,q)$--orbit} if $Q_i=\eps^a
q^bQ_j$, for some integers $a,b\in\Z$. Given two ordered tuples
$\mathbf X=(X_1,\dots,X_s)$ and $\mathbf Y=(Y_1,\dots,Y_l)$ of
elements of $K^\times$ set $\mathbf X\vee\mathbf
Y=(X_1,\dots,X_s,Y_1,\dots,Y_l)$.

Suppose that $A$ is an algebra and that $\Z_p$ is a group which acts
on~$A$ as a group of algebra automorphisms. Let $A\rtimes \Z_p$ be
the algebra with elements $\set{ag|a\in A\text{ and }g\in \Z_p}$ and
with multiplication $$ag\cdot bh=ab^g\cdot gh,\qquad \text{for
}a,b\in A\text{ and }g,h\in \Z_p.$$

The first main result of this paper is the following.

\begin{Theorem}\label{main1}
Suppose that $\bQ=\bQ_1\vee\dots\vee\bQ_{\kappa}$, where
$Q_i\in\bQ_{\alpha}$ and $Q_j\in\bQ_{\beta}$ are in the same
$(\eps,q)$--orbit only if $\alpha=\beta$. Let
$t_\alpha=|\bQ_{\alpha}|$, for $1\le\alpha\le\kappa$. Then
$\HH_{r,p,n}(\bQ)$ is Morita equivalent to the algebra
$$\displaystyle\bigoplus_{\substack{b_1,\cdots,b_{\kappa}\geq 0\\
b_1+\cdots+b_{\kappa}=n}} \Big(\HH_{pt_1,b_1}(\bQ_1)\otimes\cdots
\otimes\HH_{pt_{\kappa},b_{\kappa}}(\bQ_{\kappa})\Big)\rtimes\Z_p.$$
\end{Theorem}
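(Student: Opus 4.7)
The plan is to deduce the theorem from the Dipper--Mathas Morita equivalence for the ambient Ariki--Koike algebra $\HH_{r,n}(\bQ)$ by transporting it through the ``$\sigma$--fixed points'' construction. Since $p$ is invertible in~$K$, a Clifford-theoretic argument should show that $\HH_{r,p,n}(\bQ)=\HH_{r,n}(\bQ)^{\sigma}$ is Morita equivalent to the skew group algebra $\HH_{r,n}(\bQ)\rtimes\Z_p$: one verifies that the extension $\HH_{r,n}(\bQ)/\HH_{r,p,n}(\bQ)$ is a rank--$p$ $\Z_p$--Galois extension, so that $\HH_{r,n}(\bQ)\rtimes\Z_p$ is isomorphic to $\End_{\HH_{r,p,n}(\bQ)}\bigl(\HH_{r,n}(\bQ)\bigr)$, and the latter is Morita equivalent to $\HH_{r,p,n}(\bQ)$. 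This reduces the theorem to producing a Morita equivalence between $\HH_{r,n}(\bQ)\rtimes\Z_p$ and the right-hand side of the statement.

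I would then invoke the Dipper--Mathas theorem, which furnishes a Morita equivalence $\HH_{r,n}(\bQ)\sim\bigoplus_{\b}\HH_{pt_1,b_1}(\bQ_1)\otimes\cdots\otimes\HH_{pt_\kappa,b_\kappa}(\bQ_\kappa)$, the sum being over compositions $\b=(b_1,\dots,b_\kappa)$ of~$n$. This equivalence is implemented by an explicit idempotent $e=\sum_{\b}e_{\b}\in\HH_{r,n}(\bQ)$ built from spectral projectors of the Jucys--Murphy elements onto eigenspaces whose eigenvalues lie in the multiplicative classes spanned by the various $\bQ_\alpha$. The hypothesis that each $\bQ_\alpha$ lies in a single $(\eps,q)$--orbit means that $\bQ_\alpha$ is stable under multiplication by~$\eps$, so the automorphism $\sigma$, which twists parameters by~$\eps$, permutes eigenvalues within each $\bQ_\alpha$--class but not between distinct~$\bQ_\alpha$; therefore $\sigma(e_{\b})=e_{\b}$ for every $\b$, and the Dipper--Mathas Morita equivalence upgrades to a $\sigma$--equivariant one.

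Taking crossed products on both sides of a $\sigma$--equivariant Morita equivalence with $\sigma$--stable idempotent then yields
\[
\HH_{r,n}(\bQ)\rtimes\Z_p\;\sim\;\bigoplus_{\b}\bigl(\HH_{pt_1,b_1}(\bQ_1)\otimes\cdots\otimes\HH_{pt_\kappa,b_\kappa}(\bQ_\kappa)\bigr)\rtimes\Z_p,
\]
which, combined with the first paragraph's reduction, proves the theorem. The main obstacle will be verifying that the $\sigma$--action transported through the Morita equivalence to each factor $\HH_{pt_\alpha,b_\alpha}(\bQ_\alpha)$ coincides with the intrinsic $\sigma$--automorphism of that smaller Ariki--Koike algebra, so that the crossed products on the right are really the expected ones; this will require careful bookkeeping of how $\sigma$ interacts with the Jucys--Murphy spectral data, and the auxiliary automorphism~$\tau$ from the introduction may well be needed to normalize the identification on each factor.
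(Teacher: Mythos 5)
Your route is genuinely different from the paper's, though both turn on the same key computation $\sigma(v_\b)=v_\b$. The paper does not pass through $\HH_{r,n}\rtimes\Z_p$: it restricts the Dipper--Mathas progenerator $V=\bigoplus_\b v_\b\HH_{r,n}$ directly to $\HH_{r,p,n}$, shows the summands stay mutually orthogonal over $\HH_{r,p,n}$ (a block argument combined with Frobenius reciprocity), and then identifies each $\End_{\HH_{r,p,n}}(V^\b)$ with $\HH_\b\rtimes\Z_p$ by exhibiting an embedding and counting dimensions. You instead first establish $\HH_{r,p,n}\sim\HH_{r,n}\rtimes\Z_p$ (the paper proves exactly this as Lemma~\ref{simple Morita}, by the same rank-$p$ free-extension count underlying your Galois-extension framing), then propose to take crossed products of a $\sigma$-equivariant Dipper--Mathas equivalence. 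This is conceptually cleaner, and the $\sigma$-stability you want is genuine: $v_\b$ is a product of the elements $L_k^p$ and $T_w$, all of which $\sigma$ fixes, and $\sigma$ also fixes each complementary summand $T_d^* V^\b$ (again since $\sigma(T_i)=T_i$), so the idempotent cutting out $V^\b$ inside $\HH_{r,n}$ commutes with $\sigma$. Note, however, that $v_\b$ itself is not an idempotent and the construction is not literally a spectral projection.

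The ``main obstacle'' you flag is real and cannot be left as a hedge, because the theorem asserts Morita equivalence to a crossed product by a \emph{specific} automorphism $\sigma_1\otimes\cdots\otimes\sigma_\kappa$, not merely some order-$p$ action. The verification is available once you have the explicit identification of $\HH_\b$ inside $\End_{\HH_{r,n}}(V^\b)$ established in the paper's Proposition~\ref{real Morita}: there, $T_0^{(\alpha)}$ acts on $V^\b$ as left multiplication by a Jucys--Murphy element $L_k$, and since $\sigma(L_k)=\eps L_k$ while $\sigma(T_i)=T_i$, conjugation by the endomorphism $\sigma$ of $v_\b\HH_{r,n}$ scales $T_0^{(\alpha)}$ by $\eps$ and fixes the other generators --- that is, it is exactly $\sigma_\alpha$ on the $\alpha$th factor. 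Finally, $\tau$ plays no role in Theorem~\ref{main1}: it enters only in the decomposition-number reduction of Theorem~\ref{main2} via the Clifford-theoretic twist on simple modules, so that part of your hedging can be dropped.
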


In the theorem, each of the algebras
$\HH_{pt_\alpha,b_\alpha}(\bQ_{\alpha})$ has an automorphism
$\sigma_\alpha$ of order~$p$ and, in the direct sum, the
automorphism $\sigma_1\otimes\dots\otimes\sigma_\kappa$ acts
diagonally on the algebra $\HH_{pt_1,b_1}(\bQ_1)\otimes\cdots
\otimes\HH_{pt_{\kappa},b_{\kappa}}(\bQ_{\kappa})$. Observe that
$\langle\sigma_1\otimes\dots\otimes\sigma_\kappa\rangle\cong\Z_p$.

The second result of this paper uses Theorem~\ref{main1} to prove a
reduction theorem for computing the decomposition numbers of
$\HH_{r,p,n}(\bQ)$. In order to state this result fix a modular
system $(F,\O,K)$ ``with parameters''. That is, we fix an
algebraically closed field~$F$ of characteristic zero, a discrete
valuation ring $\O$ with maximal ideal $\pi$ and residue field
$K\cong\O/\pi$, together with parameters
$\hat{q},\hat{Q}_1,\dots,\hat{Q}_t\in\O^\times$ such that
$q=\hat{q}+\pi$ and $Q_i=\hat{Q}_i+\pi$ for each $i$. Let
$\HH^F_{r,p,n}=\HH^F_{r,p,n}(\hat{\bQ})$ be the Hecke algebra of
type $G(r,p,n)$ over $F$ with parameters $\hat{q}$ and
$\hat{\bQ}=(\hat{Q}_1,\dots,\hat{Q}_t)$ and similarly let
$\HH_{r,p,n}^\O=\HH_{r,p,n}(\hat{\bQ})$ and write
$\HH_{r,p,n}^K=\HH_{r,p,n}(\bQ)$. We assume that $\HH_{r,p,n}^F$ is
semisimple. By freeness we have that
$\HH^F_{r,p,n}\cong\HH^\O_{r,p,n}\otimes_\O F$ and
$\HH^K_{r,p,n}\cong\HH^\O_{r,p,n}\otimes_\O K$. Thus, by choosing
$\O$--lattices we can talk of modular reduction from
$\HH_{r,p,n}^F$--Mod to $\HH_{r,p,n}^K$--Mod.

Using the definitions, it is straightforward to check that the
automorphisms $\sigma$ and $\tau$ commute with modular reduction.
Thus, we have compatible automorphisms $\sigma$ and $\tau$ on
$\H_{r,n}^F$ and on $\HH_{r,n}^K$.

Let $R\in\{F,K\}$ and let $M$ be an $\HH^R_{r,p,n}$--module.  Then we
define a new $\HH^R_{r,p,n}$--module $M^\tau$ by ``twisting'' the
action of $\HH^R_{r,p,n}$using the automorphism $\tau$.
Explicitly, $M^\tau=M$ as a vector space and the
$\HH^R_{r,p,n}$--action on $M^\tau$ is defined by
$$m\cdot h=m\tau(h),\qquad
                \text{for all }m\in M\text{ and }h\in\HH^R_{r,p,n}.$$
Since $\tau^{p}$ is an inner automorphism of the algebra
$\HH^R_{r,p,n}$, it follows that $M\cong M^{\tau^p}$ for any
$\HH^R_{r,p,n}$--module $M$. Therefore, there is a natural action of
the cyclic group $\Z_p$ on the set of isomorphism classes of
$\HH^R_{r,p,n}$--modules. We define the {\bf inertia group} of $M$
to be $G_M=\set{k|0\leq k<p, M\cong M^{\tau^k}}\le\Z_p$.

If $A$ is any algebra let $\Irr(A)$ be the complete set of
isomorphism classes of irreducible $A$--modules. We are interested
in the inertia group $G_S$ and $G_D$, for $S\in\Irr(\HH^F_{r,p,n})$
and $D\in\Irr(\HH^K_{r,p,n})$.

\begin{dfn}\label{split}
Suppose that $S\in\Irr(\HH^F_{r,p,n})$ and
$D\in\Irr(\HH^K_{r,p,n})$. The decomposition number $[S:D]$ is a
{\bf $p$-splittable} decomposition number of $\HH_{r,p,n}(\bQ)$ if
$G_{S}=\{0\}=G_{D}$.
\end{dfn}

The second main result of this paper is the following:

\begin{Theorem} \label{main2} Then the decomposition
    numbers of the cyclotomic Hecke algebras of type $G(r,p,n)$ are
    completely determined by the $p'$-splittable decomposition
    numbers of certain cyclotomic Hecke algebras
    $\HH_{r',p',n'}(\bQ')$, where~$p'$ divides $p$, $1\le r'\le r$,
    $1\le n'\le n$ and where the parameters $\bQ'$ are contained in a single
    $(\varepsilon',q)$--orbit and $\varepsilon'$ is a primitive $p'$th root of unity.
\end{Theorem}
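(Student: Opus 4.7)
The plan is to combine Theorem~\ref{main1}, Clifford theory for the cyclic group~$\Z_p$, and an induction on~$p$, with the Dipper--Mathas Morita theorem for Ariki--Koike algebras as the base case $p=1$.

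I would first apply Theorem~\ref{main1} over both $F$ and~$K$: because $\sigma$ and $\tau$ both commute with modular reduction, the two Morita equivalences are compatible and so preserve decomposition numbers. This reduces the problem to computing decomposition numbers of a single summand
$$A\rtimes\Z_p=\bigl(\HH_{pt_1,b_1}(\bQ_1)\otimes\cdots\otimes\HH_{pt_\kappa,b_\kappa}(\bQ_\kappa)\bigr)\rtimes\Z_p,$$
in which each~$\bQ_\alpha$ lies in a single $(\varepsilon,q)$-orbit.

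Next, since $p$ is invertible in~$K$, Clifford theory applies cleanly to the extension $A\hookrightarrow A\rtimes\Z_p$. The irreducibles of $A\rtimes\Z_p$ are parameterized by pairs $([T],\chi)$ with~$[T]$ a $\Z_p$-orbit on $\Irr(A)$ and $\chi$ a character of the stabilizer $G_T=\bigcap_\alpha G_{T_\alpha}$, and the decomposition numbers of the tensor product $A=A_1\otimes\cdots\otimes A_\kappa$ factor as $[T:E]_A=\prod_\alpha[T_\alpha:E_\alpha]_{A_\alpha}$. Combining these observations, each decomposition number of $A\rtimes\Z_p$ equals a product of decomposition numbers of the Ariki--Koike factors (each a $1$-splittable decomposition number of the single-orbit algebra $\HH_{pt_\alpha,1,b_\alpha}(\bQ_\alpha)$, i.e.\ the $p'=1$ case of the target class) times a character-theoretic correction determined by the inertia groups~$G_T$ and~$G_E$. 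When $G_T=G_E=\{0\}$ the correction is trivial and the contribution is a $p$-splittable decomposition number of $\HH_{r,p,n}$, i.e.\ the target class with $p'=p$.

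The complementary, non-$p$-splittable contributions arise when some non-trivial cyclic subgroup $\Z_{p''}\le\Z_p$ appears as an inertia group. Here I would descend: such a Clifford parameter is stable under a proper subgroup of~$\Z_p$ and so lives on the intermediate fixed subalgebra $A^{\sigma^{p''}}$, which, by a Galois-descent argument in the spirit of Theorem~\ref{main1}, is Morita equivalent to a cyclotomic Hecke algebra of type $G(r,p',n)$ for the proper divisor $p'=p/p''$ of~$p$. Reapplying Theorem~\ref{main1} at this smaller level to re-split parameters into $(\varepsilon',q)$-orbits for a primitive $p'$th root of unity~$\varepsilon'$, and then invoking the inductive hypothesis on~$p$, identifies every non-$p$-splittable decomposition number of $\HH_{r,p,n}$ with a $p'$-splittable decomposition number of some $\HH_{r',p',n'}(\bQ')$ with $p'\mid p$.

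The main obstacle will be the descent step and the bookkeeping it entails: one must check that the Clifford datum of a non-$p$-splittable simple of $A\rtimes\Z_p$ really is realised by a simple of the proposed $G(r,p',n)$-type intermediate algebra, and that the inertia groups in characteristic zero and in the residue field interact compatibly with the descent, so that at each stage we genuinely land in a $p'$-splittable piece and the induction terminates. The compatibility of~$\sigma$ and~$\tau$ with modular reduction keeps this essentially combinatorial, but the layered Clifford theory---first for $\rtimes\Z_p$, then along successive descents---requires careful organisation, particularly in tracking how the parameters $([T],\chi)$ transform under $\tau$ on the two sides of the integral form.
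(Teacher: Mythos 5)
Your proposal has the right skeleton — Theorem~\ref{main1} followed by Clifford theory for $\Z_p$ and a recursion on $p'$, which is exactly the paper's route — but it has genuine gaps.

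The most serious gap is the omission of the \emph{cyclic decomposition numbers} property. Clifford theory alone does not reduce decomposition numbers of $A\rtimes\Z_p$ to those of $A\rtimes G_0$ with $G_0=G_S\cap G_D$; you need Proposition~\ref{condtm}, whose hypothesis is that $(S,D)$ satisfy $[S_{s,i+1}\!\uparrow:D_{d,j+1}\!\uparrow]=[S_{s,i}\!\uparrow:D_{d,j}\!\uparrow]$. For the Hecke algebras this is Corollary~\ref{verifycyclic}, proved via Proposition~\ref{cyclic} and Lemma~\ref{satis}: the shift $i\mapsto i+1$ in the Clifford index corresponds to twisting the $\HH_{r,p,n}$-module by $\tau$, and $\tau$ commutes with modular reduction. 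You note that tracking how parameters transform under $\tau$ needs ``careful organisation,'' but this undersells it: Corollary~\ref{verifycyclic} is a theorem (it fails for a general $A$ with $\Z_p$-action), and without it the reduction to $G_0$ does not hold and the rest of the argument collapses.

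Second, your description of the reduction formula is off. You say each decomposition number of $A\rtimes\Z_p$ is ``a product of decomposition numbers of the Ariki--Koike factors ... times a character-theoretic correction.'' The actual formula (Corollary~\ref{orbitreduction}) is a \emph{sum} over tuples $(j_1,\dots,j_\kappa)$ satisfying a congruence mod $d_0$, of products of decomposition numbers of the algebras $A^{(\alpha)}\rtimes\Z_{d_0}$, \emph{not} of the plain Ariki--Koike algebras $A^{(\alpha)}$. Those factors $[S^{(\alpha)}_{d_0,i}:D^{(\alpha)}_{d_0,j_\alpha}]$ live on $G(pt_\alpha,d_0,b_\alpha)$-type algebras — which is precisely why the answer involves $p'$-splittable decomposition numbers with $p'=d_0$, and also why the recursion is genuinely needed: after Theorem~\ref{lastthm} rewrites $\HH_{pt_\alpha,b_\alpha}(\bQ_\alpha)\rtimes\Z_{d_0}$ as $\HH_{pt_\alpha,d_0,b_\alpha}(\bQ'_\alpha)$, the parameters $\bQ'_\alpha$ need not lie in a single $(\varepsilon',q)$-orbit for $\varepsilon'$ a primitive $d_0$th root of unity, so Theorem~\ref{main1} must be reapplied. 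Only when $G_S\cap G_D=\{0\}$ do you land directly on $1$-splittable Ariki--Koike numbers. Finally, your descent to the fixed subalgebra $A^{\sigma^{p''}}$ is not how the paper proceeds and is questionable: in Clifford theory the intermediate algebra is $A\rtimes G_L$, not a fixed subalgebra, and once $A$ is a tensor product $\HH_\b$ rather than $\HH_{r,n}$, taking fixed points under $\sigma^{p''}$ does not obviously yield a $G(r,p',n)$-type algebra. The paper's Lemma~\ref{simple Morita} and Theorem~\ref{lastthm} instead go through $\HH_{r,n}\rtimes\Z_{p'}$ and identify this with $\HH_{r,p',n}(\bQ')$ by unpacking the order relation for $T_0$; that identification is concrete and would need to be replaced by an argument you have not supplied.
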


The proof of Theorem~\ref{main2} explicitly describes the algebras
$\HH_{r',p',n'}(\bQ')$ and the parameters $\bQ'$ which appear in
this reduction. Thus, once the $p'$-splittable decomposition numbers
are known this result gives an algorithm for computing the
decomposition matrices of the cyclotomic Hecke algebras of type
$G(r,p,n)$.

This paper is organized as follows. In the next section we define
the cyclotomic Hecke algebras of type $G(r,p,n)$ and prove the
Morita equivalence result for the Hecke algebras of type $G(r,1,n)$
which underpins all of the results in this paper. In the third
section we apply the results for the algebras of type $G(r,1,n)$ to
prove Theorem~\ref{main1}. The fourth section of the paper uses
Clifford theory to show that if an algebra can be written as a
semidirect product then its decomposition numbers are determined by
a suitable family of $p'$-splittable decomposition numbers. This
result is then applied in section~5 to prove Theorem~\ref{main2}.

\section{Morita equivalence theorems for Hecke algebras of type $G(r,1,n)$}
In this section we define the cyclotomic Hecke algebras and set our
notation. We then recall and generalize the Morita equivalence results
that we need for the cyclotomic algebras of type $G(r,1,n)$.

Throughout this paper we fix positive integers $r$, $p$ and $n$ such
that $r=pt$ for some integer~$t$. Let $K$ be an algebraically closed
field which contains a primitive $p$th root of unity~$\eps$. In
particular, the characteristic of $K$ is coprime to $p$. {\it
Throughout this paper, we assume that $p>1$ and $n\geq 3$.} Fix
parameters $q,Q_1,\cdots,Q_t\in K^{\times}$ and, as in the
introduction, let $\bQ:=(Q_1,\cdots,Q_t)$ and write $|\bQ|=t$.

Let $\HH_{r,n}(\bQ)$ be the cyclotomic Hecke algebra of type
$G(r,1,n)$. As a $K$-algebra $\HH_{r,n}(\bQ)$ is generated by
$T_0,T_1,\cdots,T_{n-1}$ subject to
the relations:
$$\begin{aligned}
& (T_{0}^{p}-Q_1^p)
(T_0^p-Q_2^p)\cdots (T_0^p-Q_t^p)=0,\\
& T_0T_1T_0T_1=T_1T_0T_1T_0,\\
& (T_i+1)(T_i-q)=0,\quad 1\leq i\leq n-1,\\
& T_iT_{i+1}T_i=T_{i+1}T_iT_{i+1},\quad 1\leq i\leq n-2,\\
& T_iT_j=T_jT_i,\quad 0\leq i<j-1\leq n-2.
\end{aligned}
$$
That is, $\HH_{r,n}(\bQ)$ is the cyclotomic Hecke algebra of type
$G(r,1,n)$ with parameters $\{Q_1',\dots,Q_r'\}$, where
$Q_{i+pj}'=\eps^i Q_{j+1}$ for $0\le i<p$ and $0\le j<t$.

\begin{dfn} \label{dfn21}
The \textbf{cyclotomic Hecke algebra of type $G(r,p,n)$} is the
subalgebra $\HH_{r,p,n}(\bQ)$ of $\HH_{r,n}(\bQ)$ which is generated
by the elements $T_0^p,T_u=T_0^{-1}T_1T_0$ and
$T_1,T_2,\cdots,T_{n-1}$.
\end{dfn}

When the choice of parameters $q,\bQ$ is clear we write
$\HH_{r,p,n}=\HH_{r,p,n}(\bQ)$ and $\HH_{r,n}=\HH_{r,n}(\bQ)$.
When we want to emphasize the coefficient ring we write
$\HH_{r,p,n}^K=\HH^K_{r,p,n}(\bQ)$ and
$\HH_{r,n}^K=\HH^K_{r,n}(\bQ)$, respectively.

\begin{Remark}
  As noted by Malle~\cite[\S4.B]{Malle:fake}, if $n=2$ then the Hecke
  algebra of type $G(2m,2p,2)$ cannot be identified with a subalgebra of
  the Hecke algebra of type $G(2m,1,2)$. It is for this reason that we
  assume that $n\ge3$ in this paper (all of the results in this section are
  valid for $n\ge1$.
\end{Remark}

Let $\Sym_n$ be the symmetric group on $n$ letters. As the type $A$
braid relations hold in $\HH_{r,n}$ for each $w\in\Sym_n$ there is a
well--defined element $T_w\in\HH_{r,n}$, where $T_w=T_{i_1}\dots
T_{i_k}$ whenever $k$ is minimal such that
$w=(i_1,i_1+1)\dots(i_k,i_k+1)$. Set $L_1=T_0$ and
$L_{k+1}=q^{-1}T_kL_kT_k$, for $k=1,\dots,n-1$. Then Ariki and
Koike~\cite[Theorem~3.10]{AK} showed that
$$\Big\{L_1^{c_1}\dots L_n^{c_n}T_w\,\Big|\,%
   w\in\Sym_n\text{ and }0\le c_i<r\Big\}$$
is a basis of $\HH_{r,n}$.

The Morita equivalence in Theorem~\ref{main1} is a consequence of
the following result for the cyclotomic Hecke algebras~$\H_{r,n}$.

\begin{thm}[\protect{Dipper--Mathas~\cite[Theorem 1.1]{DM:Morita}}]
\label{DM:Morita} Suppose that $\bQ=\bQ_1\vee\dots\vee\bQ_{\kappa}$,
such that $\alpha=\beta$ whenever $Q_i\in\bQ_{\alpha}$,
$Q_j\in\bQ_{\beta}$ and $Q_i=q^a\eps^bQ_j$, for some $a,b\in\Z$. Let
$t_\alpha=|\bQ_{\alpha}|$. Then $\HH_{r,n}(\bQ)$ is Morita
equivalent to the algebra
$$\bigoplus_{\substack{b_1,\dots,b_\kappa\ge0\\b_1+\dots+b_\kappa=n}}
\HH_{pt_1,b_1}(\bQ_1)\otimes\dots\otimes
           \HH_{pt_\kappa,b_\kappa}(\bQ_{\kappa}).$$
\end{thm}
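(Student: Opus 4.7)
My plan is to construct an idempotent $F \in \HH_{r,n}(\bQ)$ which is \emph{full}, i.e.\ $\HH_{r,n}(\bQ) F \HH_{r,n}(\bQ) = \HH_{r,n}(\bQ)$, and for which the corner algebra $F\HH_{r,n}(\bQ)F$ is isomorphic to the displayed direct sum of tensor products. Standard Morita theory then yields the equivalence via $M \mapsto FM$. The idempotent will decompose as $F = \sum_\b F_\b$ indexed by compositions $\b = (b_1,\ldots,b_\kappa)$ of $n$ with non-negative parts, where $F_\b$ projects onto modules whose Jucys--Murphy eigenvalues fall into the pattern ``$L_1,\ldots,L_{b_1}$ have eigenvalues in the orbit $\bQ_1$, then $L_{b_1+1},\ldots,L_{b_1+b_2}$ have eigenvalues in $\bQ_2$'', and so on.

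To build the $F_\b$ concretely I would pass to a modular system $(F,\O,K)$ with $\HH^F_{r,n}$ semisimple so that the $L_i$ are diagonalisable with eigenvalues of the shape $\hat q^a \hat Q_j$. The orbit hypothesis forces the eigenvalue sets arising from different $\bQ_\alpha$'s to be disjoint modulo the maximal ideal of $\O$, so one can pick pairwise coprime polynomials $P_\alpha(y) \in \O[y]$ with (reductions of) the orbit-$\alpha$ eigenvalues as roots, and invoke Hensel / Chinese remainder to produce pairwise orthogonal idempotents $e_\alpha(L_i) \in \HH^\O_{r,n}$ with $\sum_\alpha e_\alpha(L_i) = 1$. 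Setting $F_\b := \sum_\phi \prod_i e_{\phi(i)}(L_i)$, where $\phi$ ranges over functions $\{1,\ldots,n\}\to\{1,\ldots,\kappa\}$ with $|\phi^{-1}(\alpha)| = b_\alpha$, gives an orthogonal system with $\sum_\b F_\b = 1$, all defined over $\O$ and hence specialising to $\HH^K_{r,n}$. Fullness of $F$ then follows from the Specht-module filtration: every simple $\HH_{r,n}(\bQ)$-module is the head of some $S^{\blam}$, and $F_{\b(\blam)}$, where $\b(\blam)_\alpha$ counts the components of $\blam$ living in the $\alpha$th orbit, acts as the identity on that head.

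The step I expect to be the main obstacle is the isomorphism $F_\b \HH_{r,n}(\bQ) F_\b \cong \bigotimes_\alpha \HH_{pt_\alpha,b_\alpha}(\bQ_\alpha)$. Inside the corner algebra one identifies, for each $\alpha$, candidate generators of the $\alpha$th tensor factor: a ``local'' $T_0$-analogue built from $L_{b_1+\cdots+b_{\alpha-1}+1}$, together with those $T_j$ whose index lies strictly inside the $\alpha$th block. The cyclotomic relation for the local $T_0$ holds because its eigenvalues are confined to the orbit $\bQ_\alpha$, which has exactly $pt_\alpha$ elements, while the braid and commutation relations are inherited from $\HH_{r,n}(\bQ)$. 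The genuinely delicate issue is the ``boundary'' generators $T_j$ straddling two different blocks: since $L_j$ and $L_{j+1}$ then have eigenvalues in disjoint orbits, the quadratic relation combined with $F_\b$ forces $T_j F_\b$ to be an invertible element of $\O[L_j,L_{j+1}]$ times an honest permutation, and a careful bookkeeping argument exchanging these permutation elements for isomorphisms between different summands $F_{\b'}\HH_{r,n}(\bQ) F_\b$ as $\b'$ varies through the $\Sym_n$-orbit of $\b$ produces the claimed tensor factorisation, with the different choices of $\b$ accounting for the outer direct sum.
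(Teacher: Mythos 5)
Your strategy is genuinely different from the paper's: you propose to build a full idempotent $F=\sum_\b F_\b$ from spectral projectors of the Jucys--Murphy elements and identify the corner $F\HH_{r,n}(\bQ)F$ with the target algebra, whereas the paper constructs a progenerator $\bigoplus_\b V^\b$ from explicit, non-idempotent elements $v_\b$ and computes its endomorphism ring via the Dipper--James--Mathas cellular basis. The two projective modules $F_\b\HH_{r,n}$ and $V^\b$ ought to coincide, and in principle the idempotent phrasing is tidier, at the price of passing to $\O$ and lifting idempotents. So the route is reasonable --- but there is a concrete error in how you set it up.

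The formula you write for $F_\b$ is wrong, and the error propagates. Summing $\prod_i e_{\phi(i)}(L_i)$ over \emph{all} $\phi$ with $|\phi^{-1}(\alpha)|=b_\alpha$ produces the \emph{central} idempotent of the orbit-content-$\b$ block (the multiset of orbits of $L_i$-eigenvalues is a $\Sym_n$-invariant, which is exactly why such sums lie in the centre). For a central idempotent, $F_\b\HH_{r,n}F_\b$ is the whole block; in the semisimple case its dimension is $\binom{n}{b_1,\dots,b_\kappa}^{2}\dim\HH_\b$, so the isomorphism you flag as the main obstacle is false as stated. The idempotent your prose describes is the single product $F_\b=\prod_{\alpha}\prod_{b_{1..\alpha-1}<i\le b_{1..\alpha}}e_\alpha(L_i)$, with no sum over $\phi$. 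With that correction $\sum_\b F_\b\ne 1$, so fullness of $F$ becomes a genuine claim and your one-line justification breaks: $F_{\b(\blam)}$ does not act as the identity on $S(\blam)$, and $F_{\b(\blam)}S(\blam)\ne 0$ does not by itself give $F_{\b(\blam)}D\ne 0$ for the head $D$, since the form could die on $F_\b S(\blam)$ after reduction mod $\pi$. Your worry about the ``boundary'' $T_j$ is also an artifact of the wrong idempotent: with the corrected $F_\b$ one has $F_\b T_j F_\b=0$ for $j$ straddling two blocks, because $T_j$ transposes the orbit labels of $L_j,L_{j+1}$, so there is nothing to ``exchange for isomorphisms'' --- the corner is generated by the $L$'s and the interior $T_j$'s and one compares dimensions. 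In short, the plan can be salvaged, but as written the key formula is incorrect, and both fullness and the corner computation would still require essentially the same cellular/Specht-filtration bookkeeping that the paper carries out in Lemmas~\ref{v-kill}--\ref{v-li} and Proposition~\ref{V-basis}.
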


As noted in \cite{DM:Morita} the proof of Theorem~\ref{DM:Morita}
quickly reduces to the case $\kappa=2$, so only this case is
considered in~\cite{DM:Morita}. Unfortunately, to prove
Theorem~\ref{main1} we need detailed information about the bimodule
which induces the Morita equivalence of Theorem~\ref{DM:Morita} for
arbitrary $\kappa\ge1$. Consequently, we need to generalize the
results of~\cite{DM:Morita} and construct the bimodule which induces
the Morita equivalence of Theorem~\ref{DM:Morita} (in the special
case when  $\bQ$ is partitioned into a disjoint union of
$(\eps,q)$--orbits). In constructing this bimodule we refer the
reader back to \cite{DM:Morita} whenever the details are not
substantially different from the case $\kappa=2$.

First, fix non-negative integers $a$ and $b$ with $a+b\le n$ and
an integer $s$ with $1\le s\le t$. Define
$$v_{a,b}(s)=\prod_{k=1}^s(L_1^p-Q_k^p)\dots(L_a^p-Q_k^p) \cdot
  T_{w_{a,b}}\cdot\prod_{k=s+1}^t(L_1^p-Q_k^p)\dots(L_b^p-Q_k^p),$$
where $w_{a,b}=(s_{a+b-1}\dots s_1)^b$. (So $v_{n-b,b}(s)$ is the
element $v_b$ of \cite[Definition~3.3]{DM:Morita}.) We write $
v_{a,b}^{+}(s)=\prod_{k=1}^s(L_1^p-Q_k^p)\dots(L_a^p-Q_k^p) \cdot
  T_{w_{a,b}}$.
It may help the
reader to observe that if we write $w_{a,b}\in\BS_{a+b}$ as a
permutation in two-line notation then
$$w_{a,b}=\Big(\begin{array}{*6c}
1&\cdots&a&a+1&\cdots&a+b\\b+1&\cdots&a+b&1&\cdots&b
\end{array}\Big).$$

We will use the following notation extensively.\medskip

\begin{Notation}
Given any sequence $\mathbf a=(a_1,\dots,a_k)$ and integers $1\le
i\le j\le k$ we set $a_{i..j}=a_i+\dots+a_j$. If $i<j$ then set
$a_{j..i}=0$.
\end{Notation}

Until further notice we fix a partition
$\bQ=\bQ_1\vee\dots\vee\bQ_\kappa$ of $\bQ$ such that
$Q_i\in\bQ_\alpha$ and $Q_j\in\bQ_\beta$ are in the same
$(\eps,q)$-orbit only if $\alpha=\beta$. Set
$\bt=(t_1,\dots,t_\kappa)$ where $t_\alpha=|\bQ_\alpha|$, for $1\le
\alpha\le\kappa$. Without loss of generality we assume that
$\bQ_\alpha=(Q_{t_{1..\alpha-1}+1},\dots,Q_{t_{1..\alpha}})$, for
$\alpha=1,\dots,\kappa$ (set $t_0=0$).

Let $\Lambda(n,\kappa)=\set{\b=(b_1,\dots,b_\kappa)|b_{1..\kappa}=n
\text{ and } b_\alpha\ge0 \text{ for }1\le \alpha\le\kappa}$ be the
set of compositions of $n$ into $\kappa$ parts. If
$\b\in\Lambda(n,\kappa)$ then, for convenience, we set
$b_{\kappa+1}=0$.

\begin{dfn}Suppose that $\b\in\Lambda(n,\kappa)$.  Define
$$v_\b=
v^{+}_{b_\kappa,b_{1..\kappa-1}}(t_{1..\kappa-1})
  v^{+}_{b_{\kappa-1},b_{1..\kappa-2}}(t_{1..\kappa-2})
 \dots v^{+}_{b_2,b_{1..1}}(t_{1..1})u_{\omega_{\b}}^{+}, $$
where $$u_{\omega_{\b}}^{+}:=\biggl(\prod_{k=t_1+1}^{t_{1..2}}(L_1^p-Q_k^p)\dots(L_{b_1}^p-Q_k^p)\biggr)\dots
\biggl(\prod_{k=t_{1..\kappa-1}+1}^{t_{1..\kappa}}(L_1^p-Q_k^p)\dots(L_{b_{1..\kappa-1}}^p-Q_k^p)\biggr),
$$
and let $w_\b=w_{b_\kappa,b_{1..\kappa-1}}
w_{b_{\kappa-1},b_{1..\kappa-2}} \dots w_{b_2,b_{1..1}}$. Define
$V^\b=v_\b\HH_{r,n}$.
\end{dfn}

Note that $v_\b$ depends crucially on our fixed partition
$\bQ=\bQ_1\vee\dots\vee\bQ_\kappa$ of $\bQ$, so we should really write
$v_\b=v_\b(\bQ_1,\dots,\bQ_\kappa)$. Our first goal is to understand~$V^\b$.
Note that for each $2\leq\alpha\leq\kappa$, $u_{\omega_{\b}}^{+}$ has a factor
$\prod_{k=t_{1..\alpha-1}+1}^{t}(L_1^p-Q_k^p)\dots(L_{b_{1..\alpha-1}}^p-Q_k^p)$, i.e., $$
u_{\omega_{\b}}^{+}=\prod_{k=t_{1..\alpha-1}+1}^{t}(L_1^p-Q_k^p)\dots(L_{b_{1..\alpha-1}}^p-Q_k^p)B(\alpha),
$$
for some polynomial $B(\alpha)$ in the Murphy operators. Then $$\begin{aligned}
v_{\b}&=v^{+}_{b_\kappa,b_{1..\kappa-1}}(t_{1..\kappa-1})\dots v^{+}_{b_{\alpha+1},b_{1..\alpha}}(t_{1..\alpha})
v_{b_{\alpha},b_{1..\alpha-1}}(t_{1..\alpha-1})\\
&\quad v^{+}_{b_{\alpha-1},b_{1..\alpha-2}}(t_{1..\alpha-2})\dots
v^{+}_{b_{2},b_{1..1}}(t_{1..1})B(\alpha).\end{aligned}
$$
This observation will be used in the proof of the following two key properties of the elements $v_\b$.

\begin{prop}\label{v-shift}
Suppose that $\b\in\Lambda(n,\kappa)$. Then
\begin{enumerate}
\item $T_iv_\b=v_\b T_{w_\b(i)}$, whenever $1\le i<n$ and
        $i\ne b_{\alpha..\kappa}$ for $\alpha=1,\dots,\kappa$.
\item $L_kv_\b=v_\b L_{w_\b(k)}$, whenever $1\le k\le n$.
\end{enumerate}
\end{prop}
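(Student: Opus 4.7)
The plan is to reduce both parts to a single-factor lemma and then iterate. The single-factor lemma I would prove states that, for any non-negative integers $a,b$ with $a+b\le n$ and any $1\le s\le t$, one has $T_i\,v^{+}_{a,b}(s)=v^{+}_{a,b}(s)\,T_{w_{a,b}(i)}$ whenever $1\le i<a+b$ and $i\ne a$, and $L_k\,v^{+}_{a,b}(s)=v^{+}_{a,b}(s)\,L_{w_{a,b}(k)}$ for every $1\le k\le n$ (where $w_{a,b}$ is extended to $\Sym_n$ by fixing the indices greater than $a+b$).

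Granting this lemma, both parts of the proposition follow by writing $v_\b=F_\kappa F_{\kappa-1}\cdots F_2\,u^{+}_{\omega_\b}$ with $F_\alpha:=v^{+}_{b_\alpha,b_{1..\alpha-1}}(t_{1..\alpha-1})$ and iteratively pushing $L_k$ (resp.\ $T_i$) rightward through each $F_\alpha$. At the $F_\alpha$ step the current index is transformed by $w_{b_\alpha,b_{1..\alpha-1}}$; since the $F_\alpha$'s are traversed in the order $\alpha=\kappa,\kappa-1,\dots,2$, the cumulative transformation after all $\kappa-1$ factors is precisely $w_\b$. Finally $u^{+}_{\omega_\b}$, being a polynomial in the mutually commuting $L_j$'s, commutes with any $L_k$, and it commutes with the relevant $T_i$ as well (each of its factors is a symmetric product $(L_1^p-Q_k^p)\cdots(L_m^p-Q_k^p)$, which commutes with $T_i$ for $i<m$). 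The exclusion $i\ne b_{\alpha..\kappa}$ in (i) is exactly the pullback of the exclusion ``$i\ne a$'' in the single-factor lemma along the cumulative shift: a direct two-line calculation shows $w_{b_{\alpha+1},b_{1..\alpha}}\cdots w_{b_\kappa,b_{1..\kappa-1}}(b_{\alpha..\kappa})=b_\alpha$, so $i=b_{\alpha..\kappa}$ is precisely the index at which the $F_\alpha$-step of the iteration would fail.

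The main obstacle is the proof of the single-factor lemma, specifically the $L$-statement in the delicate range $a<k\le a+b$. For $k>a+b$ and for $1\le k\le a$ the argument is routine: one uses that the $L_j$'s commute and the standard identity $T_i L_i T_i=qL_{i+1}$ to shift $L_k$ through the factors one at a time. In the range $a<k\le a+b$, however, naively pushing $L_k$ leftward through $T_{w_{a,b}}$ produces non-symmetric correction terms arising from $L_i T_i=T_i L_{i+1}+(q-1)L_{i+1}$; the essential point is that these corrections are annihilated on the left by the polynomial prefix $\prod_{k=1}^{s}\prod_{j=1}^{a}(L_j^p-Q_k^p)$. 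The mechanism is that the $p$-th-power eigenvalues introduced by the correction terms match exactly the $Q_k^p$ for $k\le s$ that the prefix kills on the first $a$ strands. This is the technical heart of \cite{DM:Morita}, and I would cite and mildly reindex the arguments of Section~3 there rather than reproduce the calculation.
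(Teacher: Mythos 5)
Your reduction to a single-factor lemma, the decomposition $v_\b=F_\kappa F_{\kappa-1}\cdots F_2\,u^+_{\omega_\b}$, the observation that the cumulative conjugation is $w_\b$, and the identification of the excluded indices $i\ne b_{\alpha..\kappa}$ are all correct. But the single-factor lemma you rest the argument on is \emph{false} in the $L$-part: the identity $L_k\,v^+_{a,b}(s)=v^+_{a,b}(s)\,L_{w_{a,b}(k)}$ fails for $1\le k\le a+b$. The smallest counterexample already appears with $a=b=1$, $s=1$, $t=2$, $k=2$: here $v^+_{1,1}(1)=(L_1^p-Q_1^p)T_1$, $w_{1,1}(2)=1$, and from $L_2T_1=T_1L_1+(q-1)L_2$ one gets
$$L_2\,v^+_{1,1}(1)-v^+_{1,1}(1)\,L_1=(q-1)(L_1^p-Q_1^p)L_2,$$
which is nonzero. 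The mechanism you propose---that the correction terms are annihilated by the prefix $\prod_{k\le s}\prod_{j\le a}(L_j^p-Q_k^p)$---does not work: the correction above already \emph{contains} the prefix $(L_1^p-Q_1^p)$ as a factor and still survives. What actually kills it is multiplication on the right by the suffix $\prod_{k=s+1}^t(L_1^p-Q_k^p)\cdots(L_b^p-Q_k^p)$, which $v^+_{a,b}(s)$ omits but $v_{a,b}(s)$ carries: multiplying the display by $(L_1^p-Q_2^p)$ on the right gives $(q-1)\prod_{m=1}^2(L_1^p-Q_m^p)L_2=0$ via the cyclotomic relation on $L_1=T_0$. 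The range $1\le k\le a$, which you call routine, fails for $v^+_{a,b}(s)$ for the same reason. Moreover \cite[Prop.~3.4]{DM:Morita}, which you invoke for the technical heart, is stated and proved only for the two-sided element $v_{a,b}(s)$, not for $v^+_{a,b}(s)$, so it does not underwrite your lemma.

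The ingredient you are missing is precisely the observation the paper records immediately before the Proposition: for each $2\le\alpha\le\kappa$ the factor $u^+_{\omega_\b}$ contains $\prod_{k=t_{1..\alpha-1}+1}^{t}(L_1^p-Q_k^p)\cdots(L_{b_{1..\alpha-1}}^p-Q_k^p)$, and since this is a symmetric polynomial in $L_1,\dots,L_{b_{1..\alpha-1}}$ it commutes past $F_{\alpha-1},\dots,F_2$ and can be absorbed into the $\alpha$th factor, upgrading $v^+_{b_\alpha,b_{1..\alpha-1}}(t_{1..\alpha-1})$ to the two-sided $v_{b_\alpha,b_{1..\alpha-1}}(t_{1..\alpha-1})$. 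It is to this two-sided element that \cite[Prop.~3.4]{DM:Morita} is applied, together with the trivial commutations $T_iv_{a,b}(s)=v_{a,b}(s)T_i$ and $L_kv_{a,b}(s)=v_{a,b}(s)L_k$ for $i>a+b$, $k>a+b$ at the remaining factors. Without this reassociation your iteration cannot be carried out.
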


\begin{proof}
After translating notation, \cite[Prop.~3.4]{DM:Morita} says that
if $1\le s\le t$ and $a$ and $b$ are non--negative integers with
$a+b\le n$ then
\begin{equation}\label{switch}
T_iv_{a,b}(s)=v_{a,b}(s)T_{w_{a,b}(i)}\quad\text{and}\quad
  L_k v_{a,b}(s)=v_{a,b}(s)L_{w_{a,b}(k)}
\end{equation}
whenever $1\le i<a+b$, $i\ne a$, and $1\le k\le a+b$. This is
precisely the special case of the Proposition when $\kappa=2$. The
general case follows from this result, the observation above Proposition \ref{v-shift} and the fact
that $T_iv_{a,b}(s)=v_{a,b}(s)T_i$ and $L_kv_{a,b}(s)=v_{a,b}(s)L_k$
whenever $a+b<i<n$ and $a+b<k\le n$ for non--negative integers $a$
and~$b$.
\end{proof}

Observe that $v_\b T_j=T_{w_\b^{-1}(j)}v_\b$ and $v_\b
L_m=L_{w_\b^{-1}(m)}v_\b$ by Proposition~\ref{v-shift}, for $1\le
j<n$, $1\le m\le n$ with $j\ne b_{1..\alpha}$ for
$\alpha=1,\dots,\kappa$.

\begin{lem}\label{v-vanishing}
Suppose that $\b\in\Lambda(n,\kappa)$, $1\le\alpha\le\kappa$ and
$b_{\alpha}\neq 0$. Then
$$\prod_{Q_i\in\bQ_\alpha}(L_{1+b_{\alpha+1..\kappa}}^p-Q_i^p)
     \cdot v_\b=0
 =v_\b\cdot\prod_{Q_i\in\bQ_\alpha}(L_{b_{1..\alpha-1}+1}^p-Q_i^p)
$$
\end{lem}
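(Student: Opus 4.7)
The plan is to reduce the statement to the cyclotomic relation $(L_1^p-Q_1^p)\cdots(L_1^p-Q_t^p)=0$ on $T_0=L_1$, first by showing the two vanishings are equivalent and then by proving the right-hand one.  A short induction on~$\kappa$ using the formula $w_{a,b}(k)=k+b$ for $1\le k\le a$ together with the decomposition $w_\b=w_{b_\kappa,b_{1..\kappa-1}}\cdots w_{b_2,b_1}$ shows that $w_\b(1+b_{\alpha+1..\kappa})=b_{1..\alpha-1}+1$ (at each inductive step the outer factor $w_{b_\kappa,b_{1..\kappa-1}}$ strips off $b_\kappa$ and the remaining partial product reduces to the $\kappa-1$ case).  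Proposition~\ref{v-shift}(b) then gives $L_{1+b_{\alpha+1..\kappa}}\,v_\b=v_\b\,L_{b_{1..\alpha-1}+1}$, and pushing each $(L^p-Q^p)$ factor through $v_\b$ one at a time (they all commute with themselves),
$$\prod_{Q_i\in\bQ_\alpha}(L_{1+b_{\alpha+1..\kappa}}^p-Q_i^p)\,v_\b \;=\; v_\b\prod_{Q_i\in\bQ_\alpha}(L_{b_{1..\alpha-1}+1}^p-Q_i^p).$$
So the two displayed expressions agree and it suffices to prove the right-hand vanishing.

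For that, invoke the reorganisation displayed just above Proposition~\ref{v-shift} to write $v_\b = A\cdot v_{b_\alpha,b_{1..\alpha-1}}(t_{1..\alpha-1})\cdot B'\cdot B(\alpha)$, where $A = v^+_{b_\kappa,b_{1..\kappa-1}}(t_{1..\kappa-1})\cdots v^+_{b_{\alpha+1},b_{1..\alpha}}(t_{1..\alpha})$ and $B' = v^+_{b_{\alpha-1},b_{1..\alpha-2}}(t_{1..\alpha-2})\cdots v^+_{b_2,b_{1..1}}(t_{1..1})$.  A direct inspection of $B(\alpha)=u^+_{\omega_\b}/C(\alpha)$ shows that the factor $(L_{b_{1..\alpha-1}+1}^p-Q_k^p)$ appears in $B(\alpha)$ exactly for $k=t_{1..\alpha}+1,\ldots,t$: these are the $j=b_{1..\alpha-1}+1$ column entries in the $\beta\ge\alpha+1$ blocks of $u^+_{\omega_\b}$ that survive after $C(\alpha)$ is factored out, and their existence requires the hypothesis $b_\alpha\ge1$.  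Setting $Y:=\prod_{Q_i\in\bQ_\alpha}(L_{b_{1..\alpha-1}+1}^p-Q_i^p)$, the commutativity of the Murphy operators gives $B(\alpha)\cdot Y = \tilde B\cdot Y'$ with $Y':=\prod_{k=t_{1..\alpha-1}+1}^t(L_{b_{1..\alpha-1}+1}^p-Q_k^p)$ now ranging over the full set $k>t_{1..\alpha-1}$ and $\tilde B$ equal to what remains of $B(\alpha)$ after the $j=b_{1..\alpha-1}+1$ column is removed.

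Now slide $Y'$ leftwards.  Both $\tilde B$ and $B'$ are built from polynomials in $L_1,\dots,L_{b_{1..\alpha-1}}$ and from braid generators $T_i$ with $i\le b_{1..\alpha-1}-1$, all of which commute with $L_{b_{1..\alpha-1}+1}$; so $Y'$ passes through $B'\tilde B$ to sit immediately to the right of $v_{b_\alpha,b_{1..\alpha-1}}(t_{1..\alpha-1})$.  Apply the switch relation~\eqref{switch}: since $w_{b_\alpha,b_{1..\alpha-1}}(1)=b_{1..\alpha-1}+1$ (again using $b_\alpha\ge1$), we have $v_{b_\alpha,b_{1..\alpha-1}}(t_{1..\alpha-1})\cdot L_{b_{1..\alpha-1}+1}^p = L_1^p\cdot v_{b_\alpha,b_{1..\alpha-1}}(t_{1..\alpha-1})$, and iterating this over the factors of $Y'$ gives $v_{b_\alpha,b_{1..\alpha-1}}(t_{1..\alpha-1})\cdot Y' = Y''\cdot v_{b_\alpha,b_{1..\alpha-1}}(t_{1..\alpha-1})$ with $Y'':=\prod_{k=t_{1..\alpha-1}+1}^t(L_1^p-Q_k^p)$.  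The leftmost Murphy factor of $v_{b_\alpha,b_{1..\alpha-1}}(t_{1..\alpha-1})$ contains $\prod_{k=1}^{t_{1..\alpha-1}}(L_1^p-Q_k^p)$, and since every factor in sight is a polynomial in the single variable $L_1$ they all commute and multiply to $\prod_{k=1}^{t}(L_1^p-Q_k^p)=0$ by the cyclotomic relation on $T_0=L_1$.  Hence the whole expression vanishes.  The main obstacle is not conceptual but combinatorial: carefully tracking which $(L_j^p-Q_k^p)$ factors live in each piece of $v_\b$ and verifying the required commutations; once these are clear, the idea — use the switch to convert an $L_{b_{1..\alpha-1}+1}$-factor into an $L_1$-factor and then invoke the cyclotomic relation on $L_1$ — closes the argument in one line.
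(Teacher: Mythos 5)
Your proof is correct and follows essentially the same route as the paper's: both rest on the reorganisation of $v_\b$ displayed above Proposition~\ref{v-shift}, the switch relation \eqref{switch} to convert the relevant Murphy operator into $L_1$ sitting next to the cyclotomic factors, and the relation $\prod_{k=1}^t(L_1^p-Q_k^p)=0$. The only (cosmetic) difference is the direction of the reduction — the paper establishes the left-hand vanishing first and deduces the right-hand one via $v_\b L_{b_{1..\alpha-1}+1}=L_{1+b_{\alpha+1..\kappa}}v_\b$, whereas you prove the right-hand identity directly and deduce the left — and your write-up tracks the individual $(L_j^p-Q_k^p)$ factors somewhat more explicitly than the paper does.
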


\begin{proof}
Recall that $\prod_{i=1}^t(L_1^p-Q_i^p)=0$ since $L_1=T_0$.
Therefore, it follows from the definitions that if $b_{\kappa}\neq 0$ then
$$\prod_{Q_i\in\bQ_\kappa}(L_1^p-Q_i^p)\cdot
  v_{b_\kappa,b_{1..\kappa-1}}(t_{1..\kappa-1})
=0.$$ Hence, $\prod_{Q_i\in\bQ_\kappa}(L_1^p-Q_i^p)\cdot v_\b=0$.
Similarly, if $b_1\neq 0$, then
$$v_\b\cdot\prod_{Q_j\in\bQ_1}(L_1^p-Q_j^p)=0.$$  Now suppose that
$1\le \alpha<\kappa$, $b_{\alpha}\neq 0$, and set
$L(\alpha)=\prod_{Q_i\in\bQ_\alpha}(L_{1+b_{\alpha+1..\kappa}}^p-Q_i^p)$.
Then, using (\ref{switch}), the observation before Proposition \ref{v-shift} and the fact that any symmetric polynomial
on Murphy operators is central, we deduce that
$L(\alpha)v_\b$ has a factor of the form $$
\prod_{Q_i\in\bQ_\alpha}(L_1^p-Q_i^p) \cdot \biggl(\prod_{k=1}^{t_{1..\alpha-1}}(L_1^p-Q_k^p)\biggr)
\biggl(\prod_{k=t_{1..\alpha}+1}^{t}(L_1^p-Q_k^p)\biggr) .$$
Hence, $L(\alpha)v_\b=0$ by combining the relation
$\prod_{i=1}^t(L_1^p-Q_i^p)=0$ with the last displayed equation. The
second statement
$$v_\b\cdot\prod_{Q_i\in\bQ_\alpha}(L_{b_{1..\alpha-1}+1}^p-Q_i^p)=0,$$
for $\alpha=2,\dots,\kappa$, is equivalent to what we have just
proved because $v_\b
L_{b_{1..\alpha-1}+1}=L_{w_\b^{-1}(b_{1..\alpha-1}+1)}v_\b
           =L_{1+b_{\alpha+1..\kappa}}v_\b$ by
       Proposition~\ref{v-shift}.
\end{proof}

To proceed we recall the cellular basis of the algebras $\HH_{r,n}$,
and the associated combinatorics, introduced in~\cite{DJM:cyc}. A
\textbf{multipartition of $n$} is an ordered $r$--tuple of
partitions $\blam=(\lambda^{(1)},\dots,\lambda^{(r)})$ such that
$|\lambda^{(1)}|+\dots+|\lambda^{(r)}|=n$. Let $\Lambda^+_n$ be the
set of multipartitions of $n$. Then $\Lambda^+_n$ is a poset under
the \textbf{dominance order}, where $\blam\gedom\bmu$ if
$$\sum_{a=1}^{s-1}|\lambda^{(a)}|+\sum_{j=1}^i\lambda^{(s)}_j
\ge\sum_{a=1}^{s-1}|\mu^{(a)}|+\sum_{j=1}^i\mu^{(s)}_j,$$ for all
$1\le s\le r$ and all $i\ge1$.

The \textbf{diagram} of~$\blam$ is the set
$[\blam]
   =\set{(i,j,s)|1\le j\le\lambda^{(s)}_i\text{ for }1\le s\le r}$.
A \textbf{$\blam$-tableau} is a bijection
$\t\map{[\blam]}\{1,2,\dots,n\}$. The $\blam$--tableau $\t$ is
\textbf{standard} if $\t(i,j,s)<\t(i',j',s)$ whenever $i\le i'$, $j\le
j'$ and $(i,j,s)$ and $(i',j',s)$ are distinct elements of $[\blam]$.
Let $\Std(\blam)$ be the set of standard $\blam$--tableaux. Observe
that $\Sym_n$ acts from the right on the set of $\blam$--tableaux. In
particular, if $w\in\Sym_n$ and $\t\in\Std(\blam)$ then $\t w$ is a
$\blam$--tableau, however, it is not necessarily standard.

If $\blam\in\Lambda^+_n$ let
$\Sym_\blam=\Sym_{\lambda^{(1)}}\times\dots\times\Sym_{\lambda^{(r)}}$
be the corresponding Young (or parabolic) subgroup of $\Sym_n$. We set
$$x_\blam=\sum_{w\in\Sym_\blam}T_w\quad\text{and}\quad
  u_\blam^+=\prod_{s=2}^r
    \prod_{k=1}^{|\lambda^{(1)}|+\dots+|\lambda^{(s-1)}|}(L_k-Q'_s).$$
Then $x_\blam$ and $u_\blam^+$ are commuting elements of
$\HH_{r,n}$. Next, if $\s$ is a standard $\blam$-tableau let $d(\s)$
be the corresponding distinguished right coset representative of
$\Sym_\blam$ in $\Sym_n$. Finally, given a pair $(\s,\t)$ of
standard $\blam$--tableaux define $m_{\s\t}=T_{d(\s)}^* x_\blam
u_\blam^+ T_{d(\t)}$, where $*$ is the unique anti--isomorphism of
$\HH_{r,n}$ which fixes $T_0,\dots,T_{n-1}$. Then
$$\set{m_{\s\t}|\s,\t\in\Std(\blam)\text{ for some $\blam\in\Lambda^+_n$}}$$
is a cellular basis of $\HH_{r,n}$ by \cite[Theorem~3.26]{DJM:cyc}.

We can relate $V^\b$ to the combinatorics of the cellular basis $\{m_{\s\t}\}$
by defining $\omega_\b=(\omega_\b^{(1)},\dots,\omega_\b^{(r)})$ to be the
multipartition with
$$\omega_\b^{(s)}=\begin{cases}
    (1^{b_\alpha}),&\text{if }s=pt_{1..\alpha} \text{ for some }\alpha,\\
         (0),&\text{otherwise}.
\end{cases}$$
 From the definitions,
$u_{\omega_\b}^+=\prod_{\alpha=1}^{\kappa-1}\prod_{Q_i\in\bQ_{\alpha+1}}
  (L_1^p-Q_i^p)\dots(L_{b_{1..\alpha}}^p-Q_i^p)$. Hence, using
(\ref{switch}) we obtain the following.

\begin{lem}\label{factorization}
Suppose that $\b\in\Lambda(n,\kappa)$. Then $v_\b=v_\b^-u^+_{\omega_\b}$,
where
$$v_\b^-=\prod_{\alpha=2}^\kappa\prod_{i=1}^{t_{1..\alpha-1}}
  (L_1^p-Q_i^p)\dots(L_{b_\alpha}^p-Q_i^p)\cdot
      T_{w_{b_\alpha,b_{1..\alpha-1}}}$$
and in the product $\alpha$ decreases in order from left to right.
\end{lem}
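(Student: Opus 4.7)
The plan is to verify the factorisation by direct substitution into the original definition of $v_\b$. I would first expand each factor $v^+_{b_\alpha,b_{1..\alpha-1}}(t_{1..\alpha-1})$ appearing in the defining product of $v_\b$ using the formula $v^+_{a,b}(s)=\prod_{k=1}^s(L_1^p-Q_k^p)\cdots(L_a^p-Q_k^p)\cdot T_{w_{a,b}}$, obtaining
\[
v^+_{b_\alpha,b_{1..\alpha-1}}(t_{1..\alpha-1})
 =\prod_{i=1}^{t_{1..\alpha-1}}(L_1^p-Q_i^p)\cdots(L_{b_\alpha}^p-Q_i^p)\cdot T_{w_{b_\alpha,b_{1..\alpha-1}}}
\]
for each $2\le\alpha\le\kappa$. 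Concatenating these for $\alpha$ running from $\kappa$ down to $2$, in the same order as they appear in the definition of $v_\b$, produces precisely the expression for $v_\b^-$ in the statement of the lemma.

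Next I would match the terminal factor of $v_\b$ with the $u^+_{\omega_\b}$ in the conclusion. Since $\bQ_{\alpha+1}=(Q_{t_{1..\alpha}+1},\ldots,Q_{t_{1..\alpha+1}})$, the alternative expression
\[
u^+_{\omega_\b}=\prod_{\alpha=1}^{\kappa-1}\prod_{Q_i\in\bQ_{\alpha+1}}(L_1^p-Q_i^p)\cdots(L_{b_{1..\alpha}}^p-Q_i^p),
\]
noted immediately before the lemma, is obtained from the original defining formula by the reindexing $\alpha\mapsto\alpha-1$; this is a purely notational step.

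Combining the two observations, the definition
\[
v_\b=v^+_{b_\kappa,b_{1..\kappa-1}}(t_{1..\kappa-1})\cdots v^+_{b_2,b_{1..1}}(t_{1..1})\cdot u^+_{\omega_\b}
\]
collapses immediately to $v_\b=v_\b^-\cdot u^+_{\omega_\b}$. No commutations between adjacent factors are required since the product orderings already match, so the identities \eqref{switch} from Proposition~\ref{v-shift} are invoked, if at all, only to write the Murphy-operator prefix and the braid element within each $v^+$ in the displayed order. The only genuine bookkeeping is verifying that the product over $\alpha$ runs in decreasing order from left to right in both the definition of $v_\b$ and the definition of $v_\b^-$; this is the sole point at which care is needed, and it is not a substantive obstacle.
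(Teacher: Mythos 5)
Your proposal is correct and matches the paper's own (very brief) treatment: the factorisation $v_\b=v_\b^-u^+_{\omega_\b}$ is indeed an immediate consequence of the definition of $v_\b$, once one expands each $v^+_{b_\alpha,b_{1..\alpha-1}}(t_{1..\alpha-1})$ and observes that the terminal factor $u^+_{\omega_\b}$ in the definition of $v_\b$ coincides with the $u^+_{\omega_\b}$ of the cellular basis (the computation recorded in the sentence ``From the definitions\ldots'' preceding the lemma). You are also right that no commutations between adjacent factors are needed, since the orderings already agree; the paper's passing mention of (\ref{switch}) is not essential to this particular deduction.
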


Following~\cite{DJM:cyc} define
$M^{\omega_\b}=u_{\omega_\b}^+\HH_{r,n}$. By the Lemma, there is a
surjective $\HH_{r,n}$--module homomorphism
$\theta_\b\map{M^{\omega_\b}}V^\b$ given by $\theta_\b(h)=v^-_\b h$,
for all $h\in M^{\omega_\b}$.

Suppose that $\blam$ is a multipartition and that  $\t$ is a
standard $\blam$-tableau. For each integer $k$, with $1\le k\le n$,
define $\comp_\t(k)=s$ if $(i,j,s)$ is the unique node in~$[\blam]$
such that $\t(i,j,s)=k$.

\begin{dfn}
Suppose that $\blam$ is a multipartition of $n$. Define
\begin{align*}
\Std_\b(\blam)&=\set{\t\in\Std(\blam)|%
  \comp_\t(k)\le pt_{1..\alpha}\text{ if }1\le k\le b_{1..\alpha}}\\
\intertext{and}
\Std^+_\b(\blam)&=\set{\t\in\Std(\blam)|%
      pt_{1..\alpha-1}<\comp_\t(k)\le pt_{1..\alpha}\text{ if }
      b_{1..\alpha-1}<k\le b_{1..\alpha}}
\end{align*}
\end{dfn}

Then $\Std_\b(\blam)\ne\emptyset$ if only if
$\sum_{s=1}^{pt_{1..\alpha}}|\lambda^{(s)}|\ge b_{1..\alpha}$ for
$1\le\alpha\le\kappa$ and $\Std^+_\b(\blam)\ne0$ if and only if
$\sum_{s=pt_{1..\alpha-1}+1}^{pt_{1..\alpha}}|\lambda^{(s)}|=b_\alpha$,
for $1\le\alpha\le\kappa$. Hence,
$\Std^+_\b(\blam)\subseteq\Std_\b(\blam)$.

\begin{lem}\label{v-kill}
Suppose that $\b\in\Lambda(n,\kappa)$.
\begin{enumerate}
\item $M^{\omega_\b}$ has basis
$\set{m_{\s\t}|\s\in\Std_\b(\blam), \t\in\Std(\blam)
         \text{ for some $\blam\in\Lambda^+_n$}}$.
\item Suppose that $\s\in\Std_\b(\blam)\setminus\Std^+_\b(\blam)$ and
$\t\in\Std(\blam)$. Then $\theta_\b(m_{\s\t})=0$.
\end{enumerate}
\end{lem}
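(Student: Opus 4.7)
My plan for part~(a) is to identify $u_{\omega_\b}^+$ (as given in the paper) with the DJM element $u_\blam^+$ of~\cite{DJM:cyc} attached to the multipartition $\blam=\omega_\b$, and then to invoke the right-ideal cellular-basis theory. Using $(L_k^p-Q_i^p)=\prod_{s=p(i-1)+1}^{pi}(L_k-Q'_s)$ together with the fact that $\omega_\b$ has non-zero components $(1^{b_\alpha})$ precisely at the positions $s=pt_{1..\alpha}$, a term-by-term comparison will show that $u_{\omega_\b}^+=\prod_{s=2}^r\prod_{k=1}^{|\omega_\b^{(1)}|+\dots+|\omega_\b^{(s-1)}|}(L_k-Q'_s)$, i.e.\ the $u_{\omega_\b}^+$ here agrees with the DJM element attached to $\omega_\b$. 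The cellular-basis description of the right ideal $u_\blam^+\HH_{r,n}$ from~\cite{DJM:cyc} then supplies a basis $\{m_{\s\t}\}$ with $\s$ in an explicit set; for $\blam=\omega_\b$ the positions of the non-zero components force this set to be $\Std_\b(\blam')$, since the constraint becomes $\comp_\s(k)\le pt_{1..\alpha}$ whenever $k\le b_{1..\alpha}$.

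For part~(b), fix $\s\in\Std_\b(\blam)\setminus\Std_\b^+(\blam)$ and $\t\in\Std(\blam)$, and choose $\alpha$ and $k$ with $b_{1..\alpha-1}<k\le b_{1..\alpha}$ but $\comp_\s(k)\le pt_{1..\alpha-1}$. Let $Q_j$ be the parameter associated with the component $\comp_\s(k)$; then $j\le t_{1..\alpha-1}$, so $Q_j\in\bQ_\gamma$ for some $\gamma\le\alpha-1$. By part~(a) we can write $m_{\s\t}=u_{\omega_\b}^+h$ for some $h\in\HH_{r,n}$, and then
\[
\theta_\b(m_{\s\t})=v_\b^-u_{\omega_\b}^+h=v_\b h,
\]
so it suffices to prove $v_\b h=0$. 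The plan is to use the vanishing identity $v_\b\prod_{Q_i\in\bQ_\gamma}(L_{b_{1..\gamma-1}+1}^p-Q_i^p)=0$ from Lemma~\ref{v-vanishing}, together with Proposition~\ref{v-shift}(b), to transport the annihilator through $v_\b$ and match it with a ``misplaced'' Murphy factor carried by $h$ --- such a factor is present because of the violation of $\Std_\b^+(\blam)$ at the index~$k$. The combined polynomial should complete to the full cyclotomic product $\prod_{Q_i\in\bQ}(L_m^p-Q_i^p)=0$ at some index $m$, forcing $v_\b h=0$.

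The main obstacle will be the index bookkeeping in part~(b): Proposition~\ref{v-shift} commutes $v_\b$ with $L_m$ only for $m$ away from the boundary indices $b_{\alpha..\kappa}$, and the annihilator furnished by Lemma~\ref{v-vanishing} sits at position $b_{1..\gamma-1}+1$, which typically differs from $k$ or from the image of $k$ under the composite permutation $w_\b=w_{b_\kappa,b_{1..\kappa-1}}\dots w_{b_2,b_{1..1}}$. Carefully tracking where this annihilator lands after being transported through $v_\b^-$, and matching it with the cellular-basis expression of $m_{\s\t}$ so that the combined polynomial completes to the full cyclotomic identity in $\HH_{r,n}$, will be the central technical step; an auxiliary downward induction on the dominance order may be needed to absorb the strictly more dominant correction terms produced by the triangular action of the Jucys--Murphy elements.
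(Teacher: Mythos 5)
Part~(a) of your proposal is fine and is essentially the paper's approach: identify $u_{\omega_\b}^+$ with the element $u_{\omega_\b}^+$ of Dipper--James--Mathas attached to the multipartition $\omega_\b$ and quote their basis theorem for the right ideal $u_\blam^+\HH_{r,n}$.

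For part~(b), however, there is a genuine gap. Your plan is to write $m_{\s\t}=u_{\omega_\b}^+h$ (which is legitimate, and $\theta_\b(m_{\s\t})=v_\b h$ is then well-defined) and then to ``match'' the annihilator from Lemma~\ref{v-vanishing} with a ``misplaced Murphy factor carried by $h$.'' The trouble is that $h$ is \emph{not} explicit: the equation $m_{\s\t}=u_{\omega_\b}^+h$ gives you no structural handle on $h$, and the Jucys--Murphy factors inside the cellular basis element $m_{\s\t}=T_{d(\s)}^*x_\blam u_\blam^+T_{d(\t)}$ are sandwiched between $T_{d(\s)}^*$ and $T_{d(\t)}$, which scrambles the $L_k$-indices away from the position $b_{1..\gamma-1}+1$ where Lemma~\ref{v-vanishing} bites. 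Your ``downward induction on the dominance order'' is not a clear mechanism for recovering the factor you need.

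The paper's proof supplies exactly the missing structural control by an intermediate \emph{rearrangement} step. One introduces the finer composition $\c$ recording the block sizes of $\blam$, picks a permutation $w$ of $\{b_{1..\gamma}+1,\dots,n\}$ so that $\s'=\s w$ is standard and $d(\s)=d(\s')w$ with lengths adding, and observes that $m_{\s'\t}$ lies in the \emph{smaller} ideal $M^{\omega_{\c'}}$ for a refined composition $\c'$; hence $m_{\s'\t}=u_{\omega_{\c'}}^+h'$ with the entire enlarged Murphy product $u_{\omega_{\c'}}^+$ on display. Only then, after commuting $T_w^*$ past the factors $u^-(b_\alpha,t_{1..\alpha-1})T_{w(\alpha)}$ of $v_\b^-$, does the product contain an explicit factor $v_{b_\gamma,b_{1..\gamma-1}}(t_{1..\gamma-1})\prod_s(L_{b_{1..\gamma-1}+1}^p-Q_s^p)\cdots$ which Lemma~\ref{v-vanishing} (in the $\kappa=2$ case) kills. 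Without this step of sliding $\s$ to $\s'$ and landing in $M^{\omega_{\c'}}$, your proposal has no mechanism to produce an explicit annihilating factor, and the claimed ``completion to the full cyclotomic product'' cannot be carried out.
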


\begin{proof}
Part (a) is a translation of \cite[Theorem~4.14]{DJM:cyc} into the
current notation. See the proof of \cite[Lemma~3.9]{DM:Morita} for
more details.

For part (b) we follow the proof of \cite[Lemma~3.10]{DM:Morita}.
Let $\c=(c_1,\dots,c_\kappa)$, where
$c_\alpha=|\lambda^{(pt_{1..\alpha-1}+1)}|+\dots+|\lambda^{(pt_{1..\alpha})}|$,
for $1\le\alpha\le\kappa$. Then $c_{1..\alpha}\ge b_{1..\alpha}$,
for $1\le\alpha\le\kappa$, since $\s\in\Std_\b(\blam)$ and $\c\ne\b$
since $\s\not\in\Std^+_\b(\blam)$. Choose $\beta$ to be minimal such
that $c_\beta>b_\beta$. Then $1\le \beta<\kappa$ and if $1\le
\alpha\le\beta$ then $pt_{1..\alpha-1}<\comp_\s(k)\le
pt_{1..\alpha}$ if $b_{1..\alpha-1}<k\le b_{1..\alpha}$ since
$c_1=b_1,\dots,c_{\beta-1}=b_{\beta-1}$ and $\s\in\Std_\b(\blam)$.
Choose $\gamma\geq\beta$ to be minimal such that $b_{\gamma}\neq 0$.
Let $w$ be a permutation of
$\bigl\{b_{1..\gamma}+1,b_{1..\gamma}+2,\dots,n\bigr\}$ of minimal
length such that $\s'=\s w$ is a standard $\blam$--tableau with
$pt_{1..\alpha-1}<\comp_\s(k)\le pt_{1..\alpha}$ if
$c_{1..\alpha-1}<k\le c_{1..\alpha}$ for
$\gamma+1\le\alpha\le\kappa$. (Such a permutation exists because we
can first swap integers $k$, with $c_{1..\kappa-1}<k\le n$ and
$\comp_\s(k)\leq pt_{1..\kappa-1}$, with the integers $l$, where
$b_{1..\kappa-1}<l\le n$ and $pt_{1..\kappa-1}<\comp_\s(l)\leq
pt_{1..\kappa}$; let $\s_1$ be the resulting $\lambda$--tableau.
Then we swap integers $k$, with $c_{1..\kappa-2}<k\le
c_{1..\kappa-1}$ and $\comp_{\s_1}(k)\leq pt_{1..\kappa-2}$, with
the integers $l$, where $b_{1..\kappa-2}<l\le n$ and
$pt_{1..\kappa-2}<\comp_{\s_1}(l)\leq pt_{1..\kappa-1}$; $\cdots$,
and so on; compare \cite[Lemma~3.10]{DM:Morita}.) As a result,
$\comp_\s(k)\le pt_{1..\gamma}$ if $k\le c_{1..\gamma}$. Then
$d(\s)=d(\s')w$, with the lengths adding, so that
$m_{\s\t}=T_w^*m_{\s'\t}$. Furthermore, by construction, there is a
composition $\c'\in\Lambda(n,\kappa)$ such that
$\s'\in\Std_{\c'}(\blam)$, $c'_\alpha=c_\alpha$, for
$1\le\alpha<\beta$ or $\gamma\le\alpha\le\kappa$, and
$c'_{1..\alpha}\ge b_{1..\alpha}$, for $1\le\alpha\le\kappa$. Hence,
$m_{\s'\t}\in M^{\omega_{\c'}}$ by part~(a), so that
$m_{\s'\t}=u_{\omega_{\c'}}^+h$ for some $h\in\HH_{r,n}$. Therefore,
$\theta_\b(m_{\s\t})=v_\b^- T_w^*m_{\s'\t}=v_\b^-
T_w^*v_{\omega_{\c'}}^+h$.

To simplify the notation, for the remainder of the proof set
$w(\alpha)=w_{b_\alpha,b_{1..\alpha-1}}$ and
$u^-(m,s)=\prod_{i=1}^s(L_1^p-Q_i^p)\dots(L_m^p-Q_i^p)$, for
$1\le\alpha\le\kappa$, $1\le m\le n$ and $1\le s\le t$. Similarly,
set $u^+(m,s)=\prod_{i=s}^t(L_1^p-Q_i^p)\dots(L_m^p-Q_i^p)$. Then
$$\theta_\b(m_{\s\t})=v_\b^-T_w^* u_{\omega_{\c'}}^+h
       =\prod_{\alpha=2}^\kappa u^-(b_\alpha,t_{1..\alpha-1})T_{w(\alpha)}
           \cdot T_w^* u_{\omega_{\c'}}^+h,
$$
where the product is taken in order with $\alpha$ decreasing from
left to right. Now, $u^\pm(m,s)$ commutes with $T_i$ if $i\ne m$.
Therefore, since $w$ is a permutation of
$\bigl\{b_{1..\gamma}+1,b_{1..\gamma}+2,\dots,n\bigr\}$, we have
$$\theta_\b(m_{\s\t})
       =\prod_{\alpha=\gamma+1}^\kappa u^-(b_\alpha,t_{1..\alpha-1})T_{w(\alpha)}
           \cdot T_{w}^{\ast}\cdot\prod_{\alpha=2}^{\gamma}
          u^-(b_\alpha,t_{1..\alpha-1}) T_{w(\alpha)}\cdot u_{\omega_{\c'}}^+h,
$$
for some $w'\in\Sym_n$, where again both products are ordered with
$\alpha$ decreasing from left to right. By definition,
$u_{\omega_{\c'}}^+=\prod_{\alpha=1}^{\kappa-1}u^+(c'_{1..\alpha},t_{1..\alpha}+1)$,
where this product can be taken in any order. So,
$$\prod_{\alpha=2}^\gamma u^-(b_\alpha,t_{1..\alpha-1})T_{w(\alpha)}\cdot u_{\omega_{\c'}}^+
  =\prod_{\alpha=2}^\gamma u^-(b_\alpha,t_{1..\alpha-1})T_{w(\alpha)}\cdot
   \prod_{\alpha=1}^{\kappa-1}u^+(c'_{1..\alpha},t_{1..\alpha}+1).
$$
Now,
$w(\alpha)=w_{b_\alpha,b_{1..\alpha-1}}\in\Sym_{b_{1..\alpha}}$. So
if $1\le\alpha<\gamma$ then $T_{w(\alpha)}$ commutes with
$u^+(c'_{1..\gamma},t_{1..\gamma}+1)$ since
$c'_{1..\gamma}=c_{1..\gamma}>b_{1..\alpha}$. Consequently, the last
displayed equation contains
$u^-(b_\gamma,t_{1..\gamma-1})T_{w(\gamma)}u^+(c_{1..\gamma},t_{1..\gamma}+1)$
as a factor. Since $c_{1..\gamma}>b_{1..\gamma-1}$, this element is
equal to
$$v_{b_\gamma,b_{1..\gamma-1}}(t_{1..\gamma-1})
  \prod_{s=t_{1..\gamma}+1}^t(L_{b_{1..\gamma-1}+1}^p-Q_s^p)\dots(L_{c_{1..\gamma}}^p-Q_s^p)=0,$$
where the last equality comes from applying the right hand equation
of Lemma~\ref{v-vanishing} in the special case when $\kappa=2$.
Putting all of these equations together, we have shown that
$\theta_\b(m_{\s\t})=0$, as required.
\end{proof}

Suppose that $\t$ is a standard $\blam$--tableau and that $1\le k\le n$.
Let $\Shape_k(\t)$ be the multipartition with diagram
$\t^{-1}(\{1,\dots,k\})$; that is, $\Shape_k(\t)$ is the
multipartition given by the positions of $\{1,\dots,k\}$ in $\t$. If
$\t\in\Std(\blam)$ and $\v\in\Std(\bmu)$ then we write $\t\gedom\v$ if
$\blam\gdom\bmu$ or if $\blam=\bmu$ and
$\Shape_k(\t)\gedom\Shape_k(\v)$ for $1\le k\le n$. We extend this
partial order to pairs of standard tableaux in the obvious way.

\begin{lem}\label{v-li}
Suppose that $\blam$ is a multipartition of~$n$ and that
$\s\in\Std^+_\b(\blam)$ and $\t\in\Std(\blam)$. Let $\s'=\s
w_\b^{-1}$. Then there exists an invertible element $u\in R$ such
that
$$\theta_\b(m_{\s\t})
 =um_{\s'\t}+\sum_{(\u,\v)\gdom(\s',\t)}r_{\u\v}m_{\u\v},$$
for some $r_{\u\v}\in R$.
\end{lem}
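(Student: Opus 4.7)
The plan is to compute $\theta_\b(m_{\s\t})=v_\b^{-}m_{\s\t}$ by unpacking the factorization of $v_\b^{-}$ from Lemma~\ref{factorization} and tracking the leading term in the dominance order. The first step is a combinatorial claim: under the hypothesis $\s\in\Std^+_\b(\blam)$, the $\blam$-tableau $\s'=\s w_\b^{-1}$ is standard and $d(\s)=d(\s')w_\b$ with $\ell(d(\s))=\ell(d(\s'))+\ell(w_\b)$. This is verified by inspecting the block-cyclic structure of $w_\b=w_{b_\kappa,b_{1..\kappa-1}}\cdots w_{b_2,b_1}$; the constraint $\comp_\s(k)\in(pt_{1..\alpha-1},pt_{1..\alpha}]$ whenever $k\in(b_{1..\alpha-1},b_{1..\alpha}]$ ensures $w_\b^{-1}$ arranges the entries of $\s$ into standard form without creating extra inversions. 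Consequently $T_{d(\s)}^{\ast}=T_{w_\b}^{\ast}T_{d(\s')}^{\ast}$.

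Next, using Lemma~\ref{factorization} and iterated application of Proposition~\ref{v-shift} to commute each Murphy-operator factor $u^{-}(b_\alpha,t_{1..\alpha-1})$ past the $T$-factors to its left, rewrite $v_\b^{-}=\widetilde P(L)\cdot T_{w_\b}$, where $\widetilde P(L)$ is an ordered product of factors $L_j^p-Q_i^p$ with indices $j$ reindexed by the appropriate $w_{a,b}$. Combining with $T_{d(\s)}^{\ast}=T_{w_\b}^{\ast}T_{d(\s')}^{\ast}$ gives
\[
v_\b^{-}m_{\s\t}=\widetilde P(L)\cdot T_{w_\b}T_{w_\b}^{\ast}\cdot T_{d(\s')}^{\ast}x_\blam u_\blam^+ T_{d(\t)}.
\]
An inductive application of the quadratic Hecke relation to a reduced expression for $w_\b$ yields $T_{w_\b}T_{w_\b}^{\ast}=q^{\ell(w_\b)}\cdot 1+\sum_{\ell(z)<2\ell(w_\b)}c_zT_z$, so the leading contribution is $q^{\ell(w_\b)}\widetilde P(L)\cdot m_{\s'\t}$.

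The final step invokes the Jucys--Murphy eigenvalue property of the cellular basis (see \cite[\S3]{DJM:cyc}): for each $k$, $L_k m_{\s'\t}=c_k(\s')m_{\s'\t}+\sum_{\u\gdom\s'}a_\u^{(k)}m_{\u\t}$, where $c_k(\s')$ is the residue of the node of $\s'$ containing $k$. Evaluating $\widetilde P(L)$ at these residues gives $u=q^{\ell(w_\b)}\widetilde P(c(\s'))$, which is nonzero: each factor $c_j(\s')^p-Q_i^p$ of $\widetilde P$ compares the residue of a node of $\s'$ whose component-parameter lies in orbit $\bQ_\alpha$ with a $Q_i\in\bQ_\gamma$ for some $\gamma<\alpha$, and the hypothesis that $\bQ_1,\ldots,\bQ_\kappa$ are distinct $(\eps,q)$-orbits forces $c_j(\s')^p\ne Q_i^p$. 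The remaining correction terms --- from the non-leading Jucys--Murphy contributions, the $T_z$-expansion, and the lower-length corrections from commutation --- combine, via the dominance-order control inherent in the cellular basis \cite{DJM:cyc}, into a sum $\sum_{(\u,\v)\gdom(\s',\t)}r_{\u\v}m_{\u\v}$. The main obstacle is the bookkeeping across these three sources of correction and verifying the invertibility of $u$ via the $(\eps,q)$-orbit separation hypothesis on $\bQ$.
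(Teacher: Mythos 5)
Your proposal shares the paper's two key ingredients (the Jucys--Murphy eigenvalue formula and the $(\eps,q)$--orbit separation argument for the nonvanishing of~$u$), but it routes the computation through a factorization
$v_\b^{-}=\widetilde P(L)\cdot T_{w_\b}$
that does not hold, and the justification you offer for it does not apply. Proposition~\ref{v-shift} (and the underlying relation~(\ref{switch})) gives commutation relations for $T_i$ and $L_k$ \emph{past the full element} $v_\b=v_\b^- u^+_{\omega_\b}$ (equivalently, past each full building block $v_{a,b}(s)=u^-(a,s)T_{w_{a,b}}u^+(b,s+1)$, which has Murphy factors on \emph{both} sides of $T_{w_{a,b}}$). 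It does not license rearranging the factors of $v_\b^-$ among themselves, because $v_\b^-$ is built from the one-sided elements $v^+_{a,b}(s)=u^-(a,s)T_{w_{a,b}}$, which lack the protective right-hand Murphy factor. Concretely, to push $u^-(b_\alpha,t_{1..\alpha-1})=\prod_{i}(L_1^p-Q_i^p)\cdots(L_{b_\alpha}^p-Q_i^p)$ to the left past $T_{w(\beta)}$ for some $\beta>\alpha$, one must commute it past the generator $T_{b_\alpha}$, which occurs in every reduced word for $w(\beta)=w_{b_\beta,b_{1..\beta-1}}$. A symmetric polynomial in $L_1,\dots,L_{b_\alpha}$ commutes with $T_j$ for $j\ne b_\alpha$, but not with $T_{b_\alpha}$; indeed the relation $T_1L_1=L_2T_1-(q-1)L_2$ already shows that $T_1\prod_i(L_1^p-Q_i^p)$ is not of the form $\prod_i(L_j^p-Q_i^p)T_1$ for any single $j$. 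The error terms this generates are genuine elements of the Hecke algebra mixing $L$'s and $T$'s, and your proposal has no mechanism to control them, let alone absorb them into a dominance-lower sum.

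Even granting the factorization, steps 4 and 5 produce two further families of correction terms (from $T_{w_\b}T_{w_\b}^{*}=q^{\ell(w_\b)}+\sum_{\ell(z)>0}c_zT_z$ and from the Jucys--Murphy straightening of $\widetilde P(L)m_{\s'\t}$), and the claim that both are strictly dominance-lower than $(\s',\t)$ is asserted rather than argued; the cellular straightening rules for $T_z m_{\s'\t}$ when $\ell(z)>0$ do not automatically drop dominance and require the same careful bookkeeping the paper does. The paper sidesteps all of this by applying \cite[Prop.~3.7]{JM:cyc-Schaper} (transposed to a left-multiplication statement via $*$) directly and iteratively to $v_\b^- m_{\s\t}$, getting dominance control from that formula at every step and never attempting to separate the $L$- and $T$-parts of $v_\b^-$. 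Your opening combinatorial observation (that $\s'=\s w_\b^{-1}$ is standard with $\ell(d(\s))=\ell(d(\s'))+\ell(w_\b)$) is correct and is also implicit in the paper's argument, and your invertibility analysis of $u$ via the $(\eps,q)$--orbit hypothesis matches the paper's; but as written the proof does not go through because of the unjustified factorization of $v_\b^-$.
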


\begin{proof}
By \cite[Prop.~3.7]{JM:cyc-Schaper}, if $1\le k\le n$ and
$p(a-1)<\comp_\t(k)\le pa$ then
$$m_{\s\t}L_k^p=q^{p(j-i)}Q_a^pm_{\s\t}
    +\sum_{(\u,\v)\gdom(\s,\t)}r_{\u\v}m_{\u\v},$$
for some $r_{\u\v}\in R$. Using this formula we can compute
$\theta(m_{\s\t})=v_\b^- m_{\s\t}$ directly, which shows that $m_{\u\v}$
appears with non-zero coefficient in $\theta_\b(m_{\s\t})$ only if
$(\u,\v)\gedom(\s',\t)$. Finally, $m_{\s'\t}$ appears with non-zero
coefficient in $\theta_\b(m_{\s\t})$ because
$\bQ=\bQ_1\vee\dots\vee\bQ_\kappa$ is a partition of $\bQ$ into
$(\eps,q)$-orbits.  (Compare with the proof of
\cite[Lemma~3.11]{DM:Morita}.)
\end{proof}

Suppose that $\blam$ is a multipartition of~$n$. Let
$\HH_{r,n}^\blam$ be the module with basis $\{m_{\u\v}\}$ where $\u$
and $\v$ range over the standard $\bmu$--tableaux with
$\bmu\gdom\blam$. It follows from the general theory of cellular
algebras \cite[Lemma~2.3]{M:Ulect} that $\HH_{r,n}^\blam$ is a
two--sided ideal of $\HH_{r,n}$.

Fix $\s\in\Std(\blam)$. Then, as a vector space, the \textbf{Specht module}
(or cell module) $S(\blam)$ is the module with basis
$\set{m_{\s\t}+\HH_{r,n}^\blam|\t\in\Std(\blam)}$.  The theory of cellular
algebras~\cite[2.4]{M:Ulect} shows that $S(\blam)$ is an $\HH_{r,n}$-module
and that, up to isomorphism, $S(\blam)$ does not depend on the choice
of~$\s$.

Finally, we need the classification of the blocks for $\HH_{r,n}$.
For each $\blam\in\Lambda^+_n$ define a ``content function''
$c_\blam\map R\mathbb N$ by
$$c_\blam(x)=\#\set{(i,j,a+pb)\in[\blam]|%
                0\le a<p\text{ and }x=q^{j-i}\eps^aQ_b},$$
for $x\in R$. Then the Specht modules $S(\blam)$ and $S(\bmu)$ are in
the same block only if $c_\blam(x)=c_\bmu(x)$, for all $x\in R$, by
\cite[Prop.~5.9(ii)]{GL}. (Although we will not
need this we note that the converse is also true by
\cite[Theorem~A]{LM:blocks}.)

With the results that we have now proved we can complete the proof of
Theorem~\ref{DM:Morita} with only minor modifications of the arguments
of~\cite{DM:Morita}. Consequently, we sketch the rest of the proof and
give references to \cite{DM:Morita} for those readers who require more
detail.

\begin{dfn}Suppose that $\b\in\Lambda(n,\kappa)$ and that
    $\s\in\Std^+_\b(\blam)$, $\t\in\Std(\blam)$ for some $\blam\in\Lambda^+_n$.
    \begin{enumerate}
    \item Set $v_{\s\t}=\theta_\b(m_{\s\t})\in V^\b$.
    \item If, in addition, $\t\in\Std^+_\b(\blam)$, then let
      $\theta_{\s\t}\in\End_R(V^\b)$ be the endomorphism
      $\theta_{\s\t}(v_\b h)=v_{\s\t}h$, for all
      $h\in\HH_{r,n}$.
    \end{enumerate}
\end{dfn}

We remark that it is not clear from the definition that the maps
$\theta_{\s\t}$ are well--defined.

For $\b\in\Lambda(n,\kappa)$ let
$$\Lambda_\b^+=\set{\blam\in\Lambda_n^+|
    b_\alpha=|\lambda^{(pt_{1..\alpha-1}+1)}|+\dots+|\lambda^{(pt_{1..\alpha})}|,
    \text{ for }\alpha=1,\dots,\kappa}.$$
Note that $\Std_\b^+(\blam)\ne\emptyset$ if and only if $\blam\in\Lambda_\b^+$.

\begin{prop}\label{V-basis}
Suppose that $\b\in\Lambda(n,\kappa)$. Then:
\begin{enumerate}
    \item $V^\b$ has basis
    $$\set{v_{\s\t}|\s,\in\Std^+_\b(\blam),\t\in\Std(\blam)\text{ for some }
                \blam\in\Lambda^+_\b}.$$
    \item If $\b\ne\c\in\Lambda(n,\kappa)$ then
    $\Hom_{\HH_{r,n}}(V^\b,V^\c)=0$.
    \item $\End_{\HH_{r,n}}(V^\b)$ is a vector space with basis
    $$\set{\theta_{\s\t}|\s,\t\in\Std^+_\b(\blam)\text{ for some }
                  \blam\in\Lambda^+_n}.$$
\end{enumerate}
\end{prop}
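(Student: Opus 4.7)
The plan is to prove the three parts of Proposition~\ref{V-basis} in order, reducing each to the cellular and block structure of $\HH_{r,n}$ together with the triangularity statements already established.

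For part (a), I would begin with the surjection $\theta_\b\map{M^{\omega_\b}}V^\b$. Lemma~\ref{v-kill}(a) supplies a basis of $M^{\omega_\b}$ indexed by pairs $(\s,\t)$ with $\s\in\Std_\b(\blam)$ and $\t\in\Std(\blam)$, and Lemma~\ref{v-kill}(b) kills those images with $\s\not\in\Std^+_\b(\blam)$; since $\Std^+_\b(\blam)\ne\emptyset$ forces $\blam\in\Lambda^+_\b$, this shows that the asserted family spans $V^\b$. For linear independence I would invoke Lemma~\ref{v-li}: each $v_{\s\t}$ equals $u\,m_{\s w_\b^{-1},\t}$ plus strictly-higher cellular basis terms in the dominance order, with $u$ a unit. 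Since $\s\mapsto\s w_\b^{-1}$ is injective on $\Std^+_\b(\blam)$ and lands in $\Std(\blam)$, the leading basis elements $m_{\s w_\b^{-1},\t}$ are distinct, and the usual unitriangular argument in the cellular basis of $\HH_{r,n}$ yields linear independence.

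For part (b), I would argue via central idempotents. The key computation is that for $\blam\in\Lambda^+_\b$,
$$\sum_{x\in(\eps,q)\text{-orbit of }\bQ_\alpha}c_\blam(x)=b_\alpha,$$
because the only nodes whose contents lie in the $(\eps,q)$-orbit of $\bQ_\alpha$ are those in components $pt_{1..\alpha-1}+1,\ldots,pt_{1..\alpha}$, and $\blam\in\Lambda^+_\b$ stipulates exactly $b_\alpha$ such nodes. Since the content function $c_\blam$ is a block invariant, the labelling set $\Lambda^+_\b$ is a union of blocks, so there is a central idempotent $e_\b\in\HH_{r,n}$ acting as the identity on $S(\blam)$ for $\blam\in\Lambda^+_\b$ and as zero otherwise. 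Part (a) gives $V^\b$ a cellular Specht filtration with subquotients indexed by $\Lambda^+_\b$, so $V^\b e_\b=V^\b$; since $e_\b e_\c=0$ when $\b\ne\c$, any $f\in\Hom_{\HH_{r,n}}(V^\b,V^\c)$ satisfies $f(V^\b)=f(V^\b)e_\b\subseteq V^\c e_\c e_\b=0$.

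For part (c), the main task is to show that $\theta_{\s\t}$ is well-defined, that is, that $v_\b h=0$ implies $v_{\s\t}h=0$ whenever $\s,\t\in\Std^+_\b(\blam)$. My approach is to produce an explicit element $X_{\s\t}\in\HH_{r,n}$ with $v_{\s\t}=X_{\s\t}\,v_\b$, constructed by commuting the factor $T_{d(\s)}^*$ and the relevant Murphy-operator factors of $m_{\s\t}$ past $v_\b^-$ using Proposition~\ref{v-shift}, and then absorbing the surviving Jucys--Murphy factors of $u_\blam^+$ into $u_{\omega_\b}^+$ via Lemma~\ref{v-vanishing}; the hypothesis $\t\in\Std^+_\b(\blam)$ is used precisely so that the absorption step is valid. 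Linear independence of the $\theta_{\s\t}$ then follows by evaluating at $v_\b$ and citing part (a). For spanning I would take $f\in\End_{\HH_{r,n}}(V^\b)$, write $f(v_\b)=\sum a_{\s\t}v_{\s\t}$ using part (a), and invoke the constraint $\mathrm{Ann}_R(v_\b)\subseteq\mathrm{Ann}_R(f(v_\b))$ together with the right-sided vanishing in Lemma~\ref{v-vanishing} to force $\t\in\Std^+_\b(\blam)$ in every nonzero summand. The principal obstacle is the well-definedness step in part (c): threading the commutation identities of Proposition~\ref{v-shift} and the vanishing in Lemma~\ref{v-vanishing} across all $\kappa$ orbits (rather than the single pair $\kappa=2$ treated in \cite{DM:Morita}) is where new work is needed, and the dual spanning argument requires analogous control of the right annihilator of $v_\b$. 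The remaining bookkeeping is comparatively mechanical once the cellular basis and block decomposition are in place.
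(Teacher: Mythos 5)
Parts (a) and (b) of your argument match the paper in all essentials: part (a) is read off from Lemma~\ref{v-kill} and Lemma~\ref{v-li} exactly as you say, and for part (b) the paper likewise deduces $\Hom_{\HH_{r,n}}(V^\b,V^\c)=0$ from the Specht filtration of $V^\b$ together with the block invariance of the content functions $c_\blam$; your restatement via the central idempotents $e_\b$ of the blocks lying over $\Lambda^+_\b$ is just a cleaner packaging of the same block-separation argument.

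Part (c) is where you diverge, and where your plan has real gaps. The paper does \emph{not} prove well-definedness of $\theta_{\s\t}$ by exhibiting $X_{\s\t}\in\HH_{r,n}$ with $v_{\s\t}=X_{\s\t}v_\b$. Instead it observes that $\End_{\HH_{r,n}}(M^{\omega_\b})$ already has the basis $\{\varphi_{\s\t}\}$ from Lemma~\ref{v-kill}(a) and \cite[Theorem~6.16]{DJM:cyc}, then extends $0\subseteq\ker\theta_\b\subset M^{\omega_\b}$ to a Specht filtration compatible with the one on $V^\b$; the block computation from part (b) shows that $\ker\theta_\b$ and $V^\b$ share no blocks, so $\theta_\b$ splits. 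Conjugating by a right inverse $\theta_\b^{-1}$ gives maps $\theta_\b\varphi_{\s\t}\theta_\b^{-1}$ that automatically lie in $\End_{\HH_{r,n}}(V^\b)$ and automatically span it, and a short calculation identifies $\theta_\b\varphi_{\s\t}\theta_\b^{-1}$ with $\theta_{\s\t}$ when $\s,\t\in\Std^+_\b(\blam)$ and with $0$ otherwise. Both well-definedness and spanning fall out of the splitting at once. Your direct plan is not obviously wrong, but it needs more than you acknowledge: Proposition~\ref{v-shift} gives commutation through $v_\b$, not through the factor $v_\b^-$ you want to pull $T_{d(\s)}^*$ across, so a separate commutation lemma for $v_\b^-$ would be required; the ``absorb surviving Jucys--Murphy factors into $u_{\omega_\b}^+$'' step is only sketched and does not obviously follow from Lemma~\ref{v-vanishing} as stated; and the annihilator argument for spanning needs a right-sided analogue of Lemma~\ref{v-kill}(b), which the paper never proves because the splitting argument makes it unnecessary. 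In effect you would be proving, ahead of time and with harder computations, the fact that $\theta_{\s\t}$ is left multiplication by an explicit element of $\HH_\b$ --- a fact the paper defers to Proposition~\ref{real Morita}(b) and proves there by a less delicate comparison. The moral is that splitting $\theta_\b$ via blocks is the efficient route to (c), and I would recommend adopting it.
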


\begin{proof}
(a) This follows directly from Lemma~\ref{v-kill}(b) and
Lemma~\ref{v-li}.

(b) As in \cite[Theorem~3.16]{DM:Morita} it follows from part~(a)
and the construction of the Specht modules that $V^\b$ has a
filtration $V^\b=V_1\supset V_2\dots\supset V_k=0$ such that (1)
$V_i/V_{i+1}\cong S(\blam_i)$, for some $\blam_i\in\Lambda^+_n$, and
(2) if $\bmu\in\Lambda^+_n$ then
$$\#\Std^{+}_\b(\bmu)=\#\set{1\le i<k|V_i/V_{i+1}\cong S(\bmu)}.$$
Now, if $\b\ne\c$ and $\blam$ and $\bmu$ are two
multipartitions such that $\Std^{+}_\b(\blam)\ne\emptyset$ and
$\Std^{+}_\c(\bmu)\ne\emptyset$ then it is easy to see (cf.~the
proof of \cite[Cor.~3.17]{DM:Morita}) that $c_\blam\ne\c_\bmu$.
Consequently, by the remarks before the Theorem, the Specht modules
$S(\blam)$ and~$S(\bmu)$ are in different blocks. Therefore, all
of the composition factors of $V^\b$ and~$V^\c$ belong to different
blocks, so $\Hom_{\HH_{r,n}}(V^\b,V^\c)=0$.

(c) The proof is identical to that of
\cite[Theorem~3.19]{DM:Morita}. In outline, the argument is as
follows. By \cite[Theorem~6.16]{DJM:cyc} and Lemma~\ref{v-kill}(a),
$\End_{\HH_{r,n}}(M^{\omega_\b})$ has basis
$\set{\varphi_{\s\t}|\s,\t\in\Std^+_\b(\blam)\text{ for some
}\blam\in\Lambda^+_n}$, where
$\varphi_{\s\t}(u^+_{\omega_\b}h)=m_{\s\t}h$ for all
$h\in\HH_{r,n}$. As in the proof of part~(b), the filtration
$0\subseteq\ker\theta_\b\subset M^{\omega_\b}$ can be extended to a
Specht filtration which is compatible with the Specht filtration of
$V^\b\cong M^{\omega_\b}/\ker\theta_\b$. Using this Specht
filtration and the classification of the blocks of $\HH_{r,n}$ given
above it follows that all of the irreducible constituents of $V_\b$
and $\ker\theta_\b$ belong to different blocks. Therefore, the map
$\theta_\b\map{M^{\omega_\b}}V^\b$ splits.  Let $\theta_\b^{-1}$ be
a \textit{right} inverse to $\theta_\b$. Then a straightforward
calculation shows that
$$\theta_\b\varphi_{\s\t}\theta_\b^{-1}=\begin{cases}
    \theta_{\s\t},&\text{if } \s,\t\in\Std^+_\b(\blam),\\
    0,&\text{otherwise.}
\end{cases}$$
As the maps
$\set{\theta_\b\varphi_{\s\t}\theta_\b^{-1}|\s,\t\in\Std_\b^+(\blam)}$ span
$\End_{\HH_{r,n}}(V^\b)$, this proves part~(c).
\end{proof}

Let
$\HH_\b=\HH_{pt_1,b_1}(\bQ_1)\otimes\dots\otimes%
                \HH_{pt_\kappa,b_\kappa}(\bQ_\kappa)$.
Let $T_i^{(\alpha)}=1\otimes\dots\otimes T_i\otimes\dots\otimes1$ be a
generator of $\HH_\b$, where the $T_i$ occurs in the $\alpha^{\text{th}}$ tensor
factor, for $1\le\alpha\le\kappa$. Then, as an algebra,~$\HH_\b$ is generated
by the elements
$\set{T_i^{(\alpha)}|1\le\alpha\le\kappa\text{ and }0\le i<b_\alpha}$.

We need some combinatorial machinery to describe the $\HH_\b$--modules.
For $\blam\in\Lambda_\b^+$ let
$\blam_\b=(\blam_\b^{(1)},\dots,\blam_\b^{(\kappa)})$, where
$\blam_\b^{(\alpha)}=(\lambda^{(pt_{1..\alpha-1}+1)},\dots,%
          \lambda^{(pt_{1..\alpha})})$.
Then the Specht modules of $\HH_\b$ are all of the form
$S(\blam_\b^{(1)})\otimes\dots\otimes S(\blam_\b^{(\kappa)})$, for
$\blam\in\Lambda_\b^+$, and there is a natural bijection
$\Std^+_\b(\blam) \cong\Std(\blam_\b^{(1)})\times\dots\times
          \Std(\blam_\b^{(\kappa)})$.

Let $\Sym_\b=\Sym_{b_1}\times\dots\times\Sym_{b_\kappa}$, which we
consider as a subgroup of $\Sym_n$ via the natural embedding.
Let $\calD_\b$ be the set of distinguished (minimal length) right coset
representatives for $\Sym_\b$ in $\Sym_n$. Observe that if
$\blam\in\Lambda_\b^+$ then
\begin{equation}\label{b-standard}
\Std(\blam)=\coprod_{d\in\calD_\b}\Std^+_\b(\blam)d.
\end{equation}

Recall that a \textbf{progenerator}, or projective generator, for an
algebra $A$ is a projective $A$--module $V$ which contains every
projective indecomposable $A$--module as a direct summand. The algebras $A$
and $\End_A(V)$ are Morita equivalent and, moreover, every Morita
equivalence arises in this way.

\begin{prop}\label{real Morita}\leavevmode
\begin{enumerate}
\item Let $V=\bigoplus_{\b\in\Lambda(n,\kappa)}V^\b$. Then $V$ is a
progenerator for $\HH_{r,n}$.
\item Suppose that $\b\in\Lambda(n,\kappa)$. Then:
\begin{enumerate}
\item $V^\b$ is a projective $\HH_{r,n}$--module;
\item $\End_{\HH_{r,n}}(V^\b)\cong\HH_\b$; and
\item as left $\HH_\b$--modules, $\HH_\b\cong V^\b_0$ and
    $V^\b=\bigoplus_{d\in\calD_\b}V^\b_0T_d$,
where $V^\b_0$ is the subspace of $V^\b$ with basis
$\set{v_{\s\t}|\s,\t\in\Std^+_\b(\blam)\text{ for some
}\blam\in\Lambda_\b^+}$.
\end{enumerate}
\end{enumerate}
\end{prop}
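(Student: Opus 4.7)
My approach is to establish the four claims in the order (b)(i), (b)(ii), (b)(iii), (a). For~(b)(i), projectivity of $V^\b$ follows from the splitting of $\theta_\b\colon M^{\omega_\b}\twoheadrightarrow V^\b$ which was already established during the proof of Proposition~\ref{V-basis}(c): the image and kernel of $\theta_\b$ have Specht filtrations supported in disjoint collections of blocks, forcing the short exact sequence to split. Combined with the standard projectivity of the permutation-type module $M^{\omega_\b}$ in the cellular theory of $\HH_{r,n}$, this realises $V^\b$ as a direct summand of a projective, hence itself projective.

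For~(b)(ii) I would construct an explicit algebra isomorphism $\phi\colon\HH_\b\xrightarrow{\sim}\End_{\HH_{r,n}}(V^\b)$ by matching the basis $\{\theta_{\s\t}\mid\s,\t\in\Std^+_\b(\blam),\blam\in\Lambda_\b^+\}$ from Proposition~\ref{V-basis}(c) against the tensor product of Murphy cellular bases of the factors $\HH_{pt_\alpha,b_\alpha}(\bQ_\alpha)$, using the natural bijection $\Std^+_\b(\blam)\cong\prod_\alpha\Std(\blam_\b^{(\alpha)})$. I expect the main obstacle to lie in verifying the multiplicativity of $\phi$: the composition $\theta_{\s\t}\circ\theta_{\u\v}$ must decouple across the $\kappa$ tensor factors in a manner matching the cellular structure constants in each factor, and this decoupling is enforced precisely by the vanishing statement of Lemma~\ref{v-vanishing}, which in turn relies on the partition of $\bQ$ into disjoint $(\eps,q)$-orbits.

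For~(b)(iii) I would use~(\ref{b-standard}) to write each $\t\in\Std(\blam)$ uniquely as $\t=\t_0 d$ with $\t_0\in\Std^+_\b(\blam)$ and $d\in\calD_\b$ of minimal coset length, so that $m_{\s\t}=m_{\s\t_0}T_d$ by length-additivity; applying $\theta_\b$ yields $v_{\s\t}=v_{\s\t_0}T_d$, and comparing with the basis in Proposition~\ref{V-basis}(a) gives the decomposition $V^\b=\bigoplus_{d\in\calD_\b}V^\b_0 T_d$ as vector spaces. The subspace $V^\b_0$ is stable under the $\End_{\HH_{r,n}}(V^\b)$-action because $\theta_{\s\t}(v_\b)=v_{\s\t}\in V^\b_0$ whenever $\s,\t\in\Std^+_\b(\blam)$, and matching bases through $\phi$ then yields the left $\HH_\b$-module isomorphism $V^\b_0\cong\HH_\b$. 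Finally for~(a), since each cell module $S(\blam)$ with $\blam\in\Lambda_n^+=\coprod_\b\Lambda_\b^+$ appears in the Specht filtration of the unique $V^\b$ for which $\blam\in\Lambda_\b^+$, and each $V^\b$ is projective, the standard cellular-algebra reciprocity between Specht filtration multiplicities and decomposition matrix entries shows that every projective indecomposable of $\HH_{r,n}$ occurs as a summand of some $V^\b$; hence $V=\bigoplus_\b V^\b$ is a progenerator.
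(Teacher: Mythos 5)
Your outline of b(ii) and b(iii) essentially parallels the paper's argument, but the very first step — your proof of b(i) — contains a genuine gap, and since everything else hinges on it, this is a real problem.

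You argue that $V^\b$ is projective because it is a direct summand of $M^{\omega_\b}$ and because of ``the standard projectivity of the permutation-type module $M^{\omega_\b}$ in the cellular theory of $\HH_{r,n}$.'' This claimed projectivity of $M^{\omega_\b}=u^+_{\omega_\b}\HH_{r,n}$ is neither standard nor true in general. The modules $M^\bmu=m_\bmu\HH_{r,n}$ are the ones used to build the cyclotomic $q$--Schur algebra precisely because they are \emph{not} all projective; if they were, the Schur algebra would be Morita equivalent to the Hecke algebra, which fails outside the semisimple case. (Already $M^{\omega_\b}=\HH_{r,n}$ only when $\kappa=1$.) The block argument you quote from the proof of Proposition~\ref{V-basis}(c) does split $\theta_\b$ and exhibits $V^\b$ as a direct summand of $M^{\omega_\b}$, but being a summand of a non-projective module tells you nothing. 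The paper instead shows directly, via Proposition~\ref{V-basis}(a) together with~(\ref{b-standard}), that $\HH_{r,n}\cong\bigoplus_{\b\in\Lambda(n,\kappa)}\bigoplus_{d\in\calD_\b}T_d^*V^\b$ as a right module over itself; this realises each $V^\b$ as a summand of the regular representation and is what actually delivers projectivity. The route you take cannot be repaired by appeal to $M^{\omega_\b}$.

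This same decomposition of $\HH_{r,n}$ also gives part~(a) immediately (every projective indecomposable is a summand of the regular module, hence of some $T_d^*V^\b\cong V^\b$), whereas your proof of~(a) invokes a Brauer--Humphreys type reciprocity for cellular algebras. That route is not wrong in spirit, but as written it is far from automatic: knowing that $S(\blam)$ appears in a Specht filtration of a projective $V^\b$ does not directly force $P(D(\blam))$ to be a summand of $V^\b$ — you would need a downward induction on dominance using the unitriangularity of the decomposition matrix and the multiplicity formula $|\Std^+_\b(\blam)|=\sum_D a_{\b,D}[S(\blam):D]$. Even if you carry that through, it is considerably more work than the paper's one-line deduction from the decomposition of $\HH_{r,n}$, and it still presupposes b(i). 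I would start the whole argument with the isomorphism $\HH_{r,n}\cong\bigoplus_{\b}\bigoplus_{d}T_d^*V^\b$ rather than with a splitting of $\theta_\b$.
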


\begin{proof}[Sketch of proof]
Using Proposition~\ref{V-basis} and (\ref{b-standard}) it is
straightforward to show that
$\HH_{r,n}\cong\bigoplus_{\b\in\Lambda(n,\kappa)}\bigoplus_{d\in\calD_\b}
  T_d^*V^\b$
(see the proof of \cite[Theorem~3.20]{DM:Morita}). Hence, $V^\b$ is
a projective $\HH_{r,n}$--module, proving b(i). Part~(a) now
follows because $V^\b\cong T_d^* V^\b$, for all $d\in\calD_\b$.

Now consider the remaining statements of~(b). By Proposition~\ref{v-shift}
and Lemma~\ref{v-vanishing} there is an action of $\HH_\b$ on $V^\b$ by
left multiplication which is uniquely determined by letting the
generator~$T_i^{(\alpha)}$ of $\HH_\b$ act as left multiplication by
$L_{1+b_{\alpha+1..\kappa}}$ if $i=0$ and by
$T_{i+1+b_{\alpha+1..\kappa}}$ if $1\le i<b_\alpha$. Thus, there is a
map from $\HH_\b$ into $\End_{\HH_{r,n}}(V^\b)$. The argument used
to prove \cite[Theorem~4.7]{DM:Morita} now shows that if
$\blam\in\Lambda_\b^+$ and $\s,\t\in\Std^+_\b(\blam)$ then the map
$\theta_{\s\t}\in\End_{\HH_{r,n}}(V^\b)$ corresponds to left
multiplication by the corresponding Murphy basis element of
$\HH_\b$, where we use the bijection $\Std^{+}_\b(\blam)
\cong\Std(\blam_\b^{(1)})\times\dots\times
          \Std(\blam_\b^{(\kappa)})$.
That is if $h\in\HH_{r,p,n}$ then
$\theta_{\s\t}(v_\b h)=(m_{\s^{(1)}\t^{(1)}}\otimes\dots\otimes
m_{\s^{(\kappa)}\t^{(\kappa)}})v_\b h$, where $\u\in\Std_\b^+(\blam)$ maps to
$(\u^{(1)},\dots,u^{(\kappa)})$ under the bijection above;
see the proof of \cite[Lemma~4.6]{DM:Morita}. This shows that
$\End_{\HH_{r,n}}(V^\b)\cong\HH_\b$. Finally, part b(iii) follows from
b(ii) and the observation that
$V^\b=\bigoplus_{d\in\calD_\b}V^\b_0T_d$, as a left $\HH_\b$--module.
\end{proof}

Notice, in particular, that Theorem~\ref{DM:Morita} is an immediate
Corollary of the Proposition. Later we need the following result
which follows directly from Proposition~\ref{real Morita}b(iii);
compare \cite[Remark 3.15]{DM:Morita}.

\begin{cor}\label{basis}
Suppose that $\b\in\Lambda(n,\kappa)$. Then
$$\set{v_\b L_1^{c_1}\cdots L_{n}^{c_n}T_w|%
w\in\BS_n \text{ and }0\leq c_i<pt_\alpha \text{ whenever }
            b_{1..\alpha-1}< i\le b_{1..\alpha}}$$
is a basis of $v_\b\HH_{r,n,}$.
\end{cor}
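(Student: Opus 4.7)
The plan is to derive the claim from Proposition~\ref{real Morita}(b)(iii), which decomposes $V^\b=\bigoplus_{d\in\calD_\b} V^\b_0 T_d$ with $V^\b_0\cong\HH_\b$ as left $\HH_\b$-modules. First I would count dimensions: the proposed set contains $n!\prod_{\alpha=1}^\kappa(pt_\alpha)^{b_\alpha}$ elements, which equals $|\calD_\b|\cdot\dim\HH_\b=\frac{n!}{\prod_\alpha b_\alpha!}\cdot\prod_\alpha(pt_\alpha)^{b_\alpha}b_\alpha!=\dim V^\b$. Hence it suffices to show that the set spans $V^\b$.

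To span $V^\b_0$, I would push the tensor product of the Ariki--Koike bases of the factors $\HH_{pt_\alpha,b_\alpha}(\bQ_\alpha)$ through the isomorphism $\HH_\b\to V^\b_0$, $h\mapsto h\cdot v_\b$, arising from the action identified in the proof of Proposition~\ref{real Morita}. Each basis element of the $\alpha$-th factor acts on $v_\b$ as left multiplication by an element of $\HH_{r,n}$ supported on the index block $\{1+b_{\alpha+1..\kappa},\dots,b_{\alpha..\kappa}\}$, and these blocks for distinct $\alpha$ are disjoint, so the various contributions commute. Proposition~\ref{v-shift} then slides those left multiplications across $v_\b$ into right multiplications. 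The crucial combinatorial identity, $w_\b(k+b_{\alpha+1..\kappa})=b_{1..\alpha-1}+k$ for $1\le k\le b_\alpha$, carries the ``reversed'' block $\{1+b_{\alpha+1..\kappa},\dots,b_{\alpha..\kappa}\}$ on the left to the natural block $\{b_{1..\alpha-1}+1,\dots,b_{1..\alpha}\}$ on the right, and since all $T$-indices that occur lie strictly between consecutive boundary values $b_{\alpha'..\kappa}$, they satisfy the hypothesis of Proposition~\ref{v-shift}(a). The outcome is that $V^\b_0$ is spanned by the elements $v_\b L_1^{c_1}\cdots L_n^{c_n}T_{w'}$ with $0\le c_i<pt_\alpha$ for $b_{1..\alpha-1}<i\le b_{1..\alpha}$ and $w'\in\Sym_\b$.

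To extend from $V^\b_0$ to $V^\b$, I would multiply through by $T_d$ for $d\in\calD_\b$. Every $w\in\Sym_n$ factors uniquely as $w=w'd$ with $w'\in\Sym_\b$, $d\in\calD_\b$ and $\ell(w)=\ell(w')+\ell(d)$, so $T_w=T_{w'}T_d$, and the resulting spanning set becomes precisely the one in the statement. The main obstacle is to verify the combinatorial identity $w_\b(k+b_{\alpha+1..\kappa})=b_{1..\alpha-1}+k$; this follows by applying the factors of $w_\b=w_{b_\kappa,b_{1..\kappa-1}}\cdots w_{b_2,b_1}$ successively in the right-action convention and tracking how the initial position $k+b_{\alpha+1..\kappa}$ is decreased by $b_\kappa$, then $b_{\kappa-1}$, and so on, until $w_{b_\alpha,b_{1..\alpha-1}}$ carries it to $b_{1..\alpha-1}+k$, whereupon the remaining factors $w_{b_\beta,b_{1..\beta-1}}$ with $\beta<\alpha$ act inside $\Sym_{b_{1..\alpha-1}}$ and therefore fix it.
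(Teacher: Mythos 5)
Your proposal is correct and is essentially the same argument the paper intends: the paper itself states that the Corollary ``follows directly from Proposition~\ref{real Morita}b(iii)'', and your proof is just the (correct) elaboration of that claim, pushing the Ariki--Koike basis of $\HH_\b$ through the isomorphism $\HH_\b\cong V^\b_0$, sliding across $v_\b$ via Proposition~\ref{v-shift} and the identity $w_\b(k+b_{\alpha+1..\kappa})=b_{1..\alpha-1}+k$, multiplying by the $T_d$ with $d\in\calD_\b$, and concluding by the dimension count $\dim V^\b=|\calD_\b|\dim\HH_\b=n!\prod_\alpha(pt_\alpha)^{b_\alpha}$.
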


We now have the information that we need to start proving
Theorem~\ref{main1} from the introduction.

\section{Morita equivalence theorems for algebras of type $G(r,p,n)$}
In this section we prove Theorem~\ref{main1}, the Morita reduction
theorem for the Hecke algebras of type~$G(r,p,n)$, by analyzing the
structure of $V^\b=v_\b\HH_{r,n}$ as an $\HH_{r,p,n}$--module. We
maintain our notation from the previous section. In particular, we fix
a partitioning $\bQ=\bQ_1\vee\dots\vee\bQ_\kappa$ of $\bQ$ such that
$Q_i$ and~$Q_j$ are in different $(\eps,q)$--orbits whenever
$Q_i\in\bQ_\alpha$, $Q_j\in\bQ_\beta$ and $\alpha\ne\beta$.

Following Ariki~\cite{Ariki:hecke}, for each integer $m$ with $1\leq m\leq n$,
define:
$$S_m=\begin{cases} T_0^{p}, &\text{if $m=1$;}\\
T_0^{-1}L_m, &\text{if $2\leq m\leq n$.}
\end{cases}
$$
The elements $S_1,S_2,\cdots,S_n$ are the {\bf Murphy operators}
of~$\HH_{r,p,n}$. We need these elements to prove the
following fundamental fact.

\begin{lem}\label{Hrpn basis}
    The algebra $\HH_{r,p,n}$ has basis
    $$\set{L_1^{c_1}\dots L_n^{c_n}T_w|w\in\Sym_n, 0\le c_i<r
            \text{ and } c_1+\dots+c_n\equiv0\bmod p}.$$
\end{lem}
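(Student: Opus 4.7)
The plan is to identify $\HH_{r,p,n}$ with the fixed subalgebra $\HH_{r,n}^\sigma$ of the Ariki--Koike algebra under the automorphism $\sigma$ that sends $T_0\mapsto\eps T_0$ and fixes $T_1,\ldots,T_{n-1}$, and then to read off the basis from the Ariki--Koike basis of $\HH_{r,n}$. First I would verify that $\sigma$ extends to a well-defined algebra automorphism of order~$p$; the only nontrivial relation to check is $(T_0^p-Q_1^p)\cdots(T_0^p-Q_t^p)=0$, which is preserved because $\sigma(T_0^p)=\eps^p T_0^p=T_0^p$. Using $L_1=T_0$ and the recursion $L_{k+1}=q^{-1}T_k L_k T_k$, an easy induction gives $\sigma(L_k)=\eps L_k$ for all $k$, so
$$\sigma\bigl(L_1^{c_1}\cdots L_n^{c_n}T_w\bigr) = \eps^{c_1+\cdots+c_n}L_1^{c_1}\cdots L_n^{c_n}T_w.$$
Because $\ch(K)$ is coprime to $p$, the Ariki--Koike basis diagonalises $\sigma$, and $\HH_{r,n}^\sigma$ has as basis precisely those Ariki--Koike basis elements with $c_1+\cdots+c_n\equiv 0\pmod p$---the set claimed in the lemma.

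It then remains to prove $\HH_{r,p,n}=\HH_{r,n}^\sigma$. The inclusion $\HH_{r,p,n}\subseteq\HH_{r,n}^\sigma$ is immediate since each generator $T_0^p$, $T_u=T_0^{-1}T_1T_0$ and $T_i$ (for $i\ge 1$) is $\sigma$-fixed. For the reverse inclusion, I would show that every claimed basis element already lies in $\HH_{r,p,n}$. First, the Murphy operators $S_m=T_0^{-1}L_m$ lie in $\HH_{r,p,n}$ for $2\le m\le n$: one has $S_2=q^{-1}T_u T_1$, and since $T_m$ commutes with $T_0$ when $m\ge 2$,
$$S_{m+1} = T_0^{-1}L_{m+1} = q^{-1}T_0^{-1}T_m L_m T_m = q^{-1}T_m S_m T_m,$$
so the claim follows by induction on $m$. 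Since $L_1=T_0$ commutes with every $L_k$, and hence with every $S_m$, we may rearrange
$$L_1^{c_1}L_2^{c_2}\cdots L_n^{c_n}
= L_1^{c_1}(L_1 S_2)^{c_2}\cdots (L_1 S_n)^{c_n}
= L_1^{c_1+\cdots+c_n}\,S_2^{c_2}\cdots S_n^{c_n}.$$
When $c_1+\cdots+c_n\equiv 0\pmod p$, $L_1^{c_1+\cdots+c_n}$ is an integer power of $T_0^p\in\HH_{r,p,n}$, so the entire element $L_1^{c_1}\cdots L_n^{c_n}T_w$ lies in $\HH_{r,p,n}$.

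Combining the two inclusions yields $\HH_{r,p,n}=\HH_{r,n}^\sigma$, and the first paragraph identifies its basis. Linear independence is automatic, as the set is a subset of the Ariki--Koike basis of $\HH_{r,n}$. The main obstacle is the rearrangement in the last display: the argument hinges on $L_1=T_0$ commuting with every $L_k$ (equivalently with every $S_m$), which is what lets us collect a single power of $T_0$ at the front and then invoke $T_0^p\in\HH_{r,p,n}$ together with the congruence hypothesis. The remainder of the proof is routine bookkeeping with the Ariki--Koike basis and the defining relations of $\sigma$.
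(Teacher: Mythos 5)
Your proof is correct, but it takes a genuinely different route from the paper's. The paper simply quotes Ariki's basis theorem for $\HH_{r,p,n}$ (\cite[Prop.~1.6]{Ariki:hecke}), which expresses the basis in terms of the elements $S_m$ and a condition $0\le pc_1-c_2-\dots-c_n<r$, and then reparametrizes via $S_1^{c_1}\cdots S_n^{c_n}T_w=L_1^{pc_1-c_2-\dots-c_n}L_2^{c_2}\cdots L_n^{c_n}T_w$ to land on the stated index set; the lemma is essentially a change of coordinates applied to a cited result. You instead work directly with the Ariki--Koike basis of $\HH_{r,n}$, observe that $\sigma(L_k)=\eps L_k$ so each basis element is a $\sigma$-eigenvector with eigenvalue $\eps^{c_1+\cdots+c_n}$, and conclude that the proposed set is a basis of the fixed subalgebra $\HH_{r,n}^\sigma$; you then close the argument by showing $\HH_{r,n}^\sigma=\HH_{r,p,n}$, with the nontrivial direction being the rearrangement $L_1^{c_1}\cdots L_n^{c_n}=L_1^{c_1+\cdots+c_n}S_2^{c_2}\cdots S_n^{c_n}$ together with $S_m\in\HH_{r,p,n}$ by the recursion $S_{m+1}=q^{-1}T_mS_mT_m$. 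Your approach is more self-contained: in effect it reproves the cited result of Ariki from the Ariki--Koike basis, and it also supplies a proof of the claim, stated without proof in the paper, that $\HH_{r,p,n}$ is exactly the $\sigma$-fixed subalgebra. The cost is that you must separately verify the inclusion $\HH_{r,n}^\sigma\subseteq\HH_{r,p,n}$, which the paper gets for free from Ariki's theorem.
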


\begin{proof}
Ariki~\cite[Prop.~1.6]{Ariki:hecke} showed that $\HH_{r,p,n}$ is the
submodule of $\HH_{r,n}$ with basis
$$
\Big\{S_1^{c_1}\cdots S_{n}^{c_n}T_w\,\Big|\,\begin{matrix}%
    w\in\BS_n, 0\leq c_i<r \text{ for }2\le i\leq n,\\
    \text{and  }0\le pc_1-c_2-\dots-c_n<r
\end{matrix}\Big\}.
$$
Applying the definitions
$S_1^{c_1}\cdots S_{n}^{c_n}T_w=L_1^{pc_1-c_2-\dots-c_n}L_2^{c_2}
       \dots L_n^{c_n}T_w$.
Hence, the Lemma is just a reformulation of Ariki's result.
\end{proof}

Recall from the introduction that there are two algebra
automorphisms $\sigma$ and~$\tau$ of~$\HH_{r,n}$.

\begin{dfn}\label{sigma tau}
The automorphism
$\tau$ is the $K$-algebra automorphism of $\HH_{r,n}$ which is
given by $\tau(h)=T_0^{-1}hT_0$, for all $h\in\HH_{r,n}$.
The map $\sigma$ is the $K$-algebra automorphism of $\HH_{r,n}$ which is
determined by
$$\sigma(T_0)=\eps T_0\quad\And\quad \sigma(T_i)=T_i,
\qquad\text{for } i=1,\dots,n-1.$$
\end{dfn}

The reader can check that $\HH_{r,p,n}$ is the fixed point subalgebra of
$\HH_{r,n}$ under~$\sigma$.

Suppose that $A$ is an algebra with an automorphism $\theta$ of
order~$p$. Define $A\rtimes_\theta\Z_p$ to be the $K$-algebra with
elements $$\set{a\theta^k|a\in A\And0\le k<p}$$ and with
multiplication $a\theta^k\cdot
b\theta^l=a\theta^{k}(b)\theta^{k+l}$, for $a,b\in A$ and $0\le
k,l<p$.  As in the introduction, if $M$ is an $A$--module then we
can define a new $A$--module $M^\theta$ which is isomorphic to $M$
as a vector space but with the $A$--action twisted by $\theta$.
Informally, it is convenient to think of $M^\theta$ as the set of
elements $\set{m\theta|m\in M}$ with $A$-action $m\theta\cdot
a=\big(m\theta(a)\big)\theta$, for $m\in M$ and $a\in A$.

\begin{lem}\label{invariant}Suppose that $0\le b\le n$.  Then
$\sigma(v_\b)=v_\b $ and $\tau(v_\b)\in v_\b \HH_{r,p,n}$.
Consequently, $\bigl(v_\b\HH_{r,n}\bigr)^{\sigma}=v_\b\HH_{r,n}$ and
$\bigl(v_\b\HH_{r,p,n}\bigr)^{\tau}=v_\b\HH_{r,p,n}$ as
$\HH_{r,p,n}$-modules.
\end{lem}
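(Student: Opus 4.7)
The plan is to establish two pointwise statements, (i)~$\sigma(v_\b)=v_\b$ and (ii)~$\tau(v_\b)\in v_\b\HH_{r,p,n}$; the ``consequently'' part then drops out, since (i) makes $\sigma$ an algebra automorphism of~$\HH_{r,n}$ that preserves the right ideal $v_\b\HH_{r,n}$ setwise, while (ii) combined with the fact that $\tau$ already preserves $\HH_{r,p,n}$ setwise yields $\tau(v_\b\HH_{r,p,n})=v_\b\HH_{r,p,n}$, whence the $\tau$-twisted module is isomorphic to the untwisted one via $\tau$ itself.

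For~(i) I would first show by induction on $k$ that $\sigma(L_k)=\eps L_k$: the base case is $\sigma(L_1)=\sigma(T_0)=\eps T_0$, and the inductive step uses $L_{k+1}=q^{-1}T_kL_kT_k$ together with $\sigma(T_k)=T_k$. Since $\eps^p=1$ this gives $\sigma(L_k^p-Q_i^p)=L_k^p-Q_i^p$, and every $T_w$ with $w\in\Sym_n$ is $\sigma$-fixed because it involves only $T_1,\dots,T_{n-1}$. Inspecting the definition, $v_\b$ is a product of exactly these two types of factors, so $\sigma(v_\b)=v_\b$.

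For~(ii), since $\tau(h)=T_0^{-1}hT_0$ we have $\tau(v_\b)=L_1^{-1}v_\b L_1$. Proposition~\ref{v-shift}(b) at $k=1$ gives $L_1v_\b=v_\b L_{w_\b(1)}$, and invertibility of the $L_m$ (each is a product of invertible generators) rearranges this to $L_1^{-1}v_\b=v_\b L_{w_\b(1)}^{-1}$. Therefore
$$\tau(v_\b)=v_\b\cdot L_{w_\b(1)}^{-1}L_1.$$
By the computation in the previous paragraph $\sigma$ scales every $L_m$ by~$\eps$, so $\sigma(L_{w_\b(1)}^{-1}L_1)=\eps^{-1}\cdot\eps\cdot L_{w_\b(1)}^{-1}L_1=L_{w_\b(1)}^{-1}L_1$; since $\HH_{r,p,n}=\HH_{r,n}^\sigma$, the factor $L_{w_\b(1)}^{-1}L_1$ lies in $\HH_{r,p,n}$, proving~(ii). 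There is no genuine obstacle here; the only point requiring a little care is that the $L_k$-version of Proposition~\ref{v-shift}(b) carries no excluded indices (unlike the $T_i$-version), so it applies cleanly at $k=1$ and lets us pull $L_1^{\pm 1}$ through $v_\b$.
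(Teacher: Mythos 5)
Your proof is correct and follows essentially the same line as the paper's: $\sigma$ fixes $v_\b$ because it fixes each $T_i$ ($i\ge1$) and each $L_k^p$, and $\tau(v_\b)=v_\b L_{w_\b(1)}^{-1}L_1$ by pulling $T_0=L_1$ through $v_\b$ via Proposition~\ref{v-shift}(b). The one small cosmetic difference is that the paper recognizes $L_{w_\b(1)}^{-1}L_1=L_{b_{1..\kappa-1}+1}^{-1}L_1$ as $S_{b_{1..\kappa-1}+1}^{-1}$, the inverse of a Murphy operator of $\HH_{r,p,n}$, whereas you conclude membership in $\HH_{r,p,n}=\HH_{r,n}^\sigma$ by observing that $\sigma$ scales every $L_m$ by $\eps$ and so fixes the ratio; both are perfectly valid and the rest of your argument matches the paper's.
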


\begin{proof}
Since $\sigma(T_i)=T_i$, for $1\le i<n$, and $\sigma(L_k^p)=L_k^p$,
for $1\le k\le n$, we see that $\sigma(v_\b)=v_\b$.  Furthermore,
$T_0v_\b=v_\b L_{b_{1..\kappa-1}+1}$ and $v_\b
T_0=L_{b_{2..\kappa}+1}v_\b$, by Proposition~\ref{v-shift}, so
$\tau(v_\b)=T_{0}^{-1}v_\b T_{0}=v_\b L_{b_{1..\kappa-1}+1}^{-1}L_1
            =v_\b S_{b_{1..\kappa-1}+1}^{-1}\in v_\b\HH_{r,p,n}$.
 From what we have proved, $\sigma(v_\b\HH_{r,n})=v_\b\HH_{r,n}$.
Consequently, the map $v_\b h\mapsto\sigma(v_\b
h)=\sigma(v_\b)\sigma(h)$ defines a module isomorphism
$v_\b\HH_{r,n}\cong\big(v_\b\HH_{r,n}\big)^\sigma$, for
$h\in\HH_{r,n}$. Similarly, $\big(v_\b\HH_{r,p,n}\big)^\tau\cong
v_\b\HH_{r,p,n}$ as $\HH_{r,p,n}$-modules.
\end{proof}

\begin{prop}\label{basis2}Suppose that $0\le b\le n$. Then
$$\Bigg\{v_\b L_1^{c_1}\cdots L_{n}^{c_n}T_w\,\Bigg|\,\begin{matrix}%
    w\in\Sym_n\text{ and }
    0\le c_i<pt_\alpha\text{ whenever }b_{1..\alpha-1}<i\le b_{1..\alpha}\\
      \text{ and }c_1+\dots+c_n\equiv0\!\!\pmod p
\end{matrix}\Bigg\}
$$
is a basis of $v_\b\HH_{r,p,n}$. In particular,
$\dim v_\b \HH_{r,p,n}=\frac1p\dim v_\b \HH_{r,n}$.
\end{prop}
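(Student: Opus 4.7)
The plan is to combine the known basis of $v_\b\HH_{r,n}$ from Corollary~\ref{basis} with a dimension count obtained by restricting the automorphism $\sigma$ to $v_\b\HH_{r,n}$. First I would observe that $pt_\alpha\le r$, so the claimed spanning set is a subset of the Corollary~\ref{basis} basis, which gives linear independence for free. Next, Lemma~\ref{Hrpn basis} shows that $L_1^{c_1}\cdots L_n^{c_n}T_w\in\HH_{r,p,n}$ whenever $0\le c_i<r$ and $c_1+\dots+c_n\equiv0\pmod p$, so each claimed element already lies in $v_\b\HH_{r,p,n}$. This reduces the proposition to a matching dimension count.

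For the count, the number of tuples $(c_1,\dots,c_n)$ satisfying the ``local'' bounds $0\le c_i<pt_\alpha$ (for $b_{1..\alpha-1}<i\le b_{1..\alpha}$) is $\prod_\alpha(pt_\alpha)^{b_\alpha}$. Because each bound $pt_\alpha$ is divisible by $p$, the residues of the $c_i$ modulo $p$ are uniformly distributed, and exactly a $\frac1p$-fraction of these tuples satisfy $c_1+\dots+c_n\equiv0\pmod p$. Multiplying by $|\Sym_n|=n!$, the claimed set has cardinality $\frac1p\dim v_\b\HH_{r,n}$.

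The key step is then to produce the matching upper bound $\dim v_\b\HH_{r,p,n}\le\frac1p\dim v_\b\HH_{r,n}$. By Lemma~\ref{invariant}, $\sigma(v_\b)=v_\b$, so $\sigma$ restricts to a $K$-linear automorphism of the vector space $v_\b\HH_{r,n}$; and because $\HH_{r,p,n}$ is the fixed subalgebra of $\HH_{r,n}$ under $\sigma$, the subspace $v_\b\HH_{r,p,n}$ sits inside the $\sigma$-fixed subspace of $v_\b\HH_{r,n}$. A short induction using $L_1=T_0$ and $L_{k+1}=q^{-1}T_kL_kT_k$, together with $\sigma(T_0)=\eps T_0$ and $\sigma(T_i)=T_i$ for $i\ge1$, shows that $\sigma(L_k)=\eps L_k$ for all $k$. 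Consequently every element $v_\b L_1^{c_1}\cdots L_n^{c_n}T_w$ of the Corollary~\ref{basis} basis is a $\sigma$-eigenvector with eigenvalue $\eps^{c_1+\dots+c_n}$, so the $\sigma$-fixed subspace of $v_\b\HH_{r,n}$ has dimension precisely equal to the count from the previous paragraph.

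Putting the two inequalities together forces equality throughout, so the linearly independent set of the right cardinality is a basis and the dimension formula $\dim v_\b\HH_{r,p,n}=\frac1p\dim v_\b\HH_{r,n}$ drops out. There is no serious obstacle in this plan; the only point that warrants a moment's care is the identification of $\sigma$-eigenvalues on the Corollary~\ref{basis} basis, which however is immediate from $\sigma(L_k)=\eps L_k$. I therefore expect the proof to be essentially a bookkeeping argument wrapping the ingredients already assembled in Corollary~\ref{basis}, Lemma~\ref{Hrpn basis} and Lemma~\ref{invariant}.
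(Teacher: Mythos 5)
Your proof is correct, and it takes a genuinely different route from the paper's for the key step. The paper also begins by observing $\dim v_\b\HH_{r,p,n}\ge\frac1p\dim v_\b\HH_{r,n}$ (because $\HH_{r,n}$ is free of rank $p$ over $\HH_{r,p,n}$) and that the claimed set has cardinality $\frac1p\dim v_\b\HH_{r,n}$, so that it suffices to establish the matching upper bound $\dim v_\b\HH_{r,p,n}\le\frac1p\dim v_\b\HH_{r,n}$. Where you diverge is in how you obtain this upper bound. The paper proves that the claimed set \emph{spans} $v_\b\HH_{r,p,n}$ by a fairly involved induction: starting from the $\Lambda(n,\kappa)$-indexed spanning set supplied by Lemma~\ref{Hrpn basis}, it uses the vanishing relation $v_\b\prod_{Q_i\in\bQ_\alpha}(L^p_{b_{1..\alpha-1}+1}-Q_i^p)=0$ of Lemma~\ref{v-vanishing} together with the commutation formula $L_i^c=q^{-1}T_{i-1}L_{i-1}^cT_{i-1}+(1-q^{-1})\sum_d L_{i-1}^{c-d}L_i^dT_{i-1}$ to lower the exponents of each $L_i$ below $pt_\alpha$, then checks that all these manipulations preserve $c_1+\dots+c_n\bmod p$. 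You instead note that $\sigma(v_\b)=v_\b$ forces $v_\b\HH_{r,p,n}$ into the $1$-eigenspace of the $K$-linear map $\sigma$ on $v_\b\HH_{r,n}$, and that the Corollary~\ref{basis} basis diagonalizes $\sigma$ with eigenvalues $\eps^{c_1+\dots+c_n}$, so the $1$-eigenspace has exactly the right dimension. Both arguments are valid; yours is shorter and conceptually cleaner, replacing the bookkeeping induction with the $\sigma$-eigenvector observation (which also incidentally shows that $v_\b\HH_{r,p,n}$ \emph{equals} the $\sigma$-fixed subspace of $v_\b\HH_{r,n}$), whereas the paper's rewriting argument is more explicit and effectively gives an algorithm for expressing an arbitrary element of $v_\b\HH_{r,p,n}$ in terms of the basis.
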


\begin{proof}
First, observe that $\dim v_\b \HH_{r,p,n}\geq\frac1p\dim v_\b \HH_{r,n}$
since $\HH_{r,n}$ is a free $\HH_{r,p,n}$--module of rank~$p$. By
Corollary~\ref{basis} the number of the elements
given in the statement of the Proposition is exactly
$\frac1p\dim v_\b \HH_{r,n}$. Therefore, it suffices to show that the
elements in the statement in the Proposition span $v_\b \HH_{r,p,n}$.

By Lemma~\ref{Hrpn basis} the module $v_\b \HH_{r,p,n}$ is spanned by the
elements
$$ \Big\{v_\b L_1^{c_1}\cdots L_{n}^{c_n}T_w\,\Big|\,%
  \begin{matrix}w\in\BS_n, 0\leq c_i<r \text{ for } 1\leq i\leq n\\
      c_1+\cdots+c_n\equiv 0\!\!\pmod{p}
  \end{matrix}\Big\}.
$$
The elements $L_1,\dots,L_n$ commute by \cite[Lemma~3.3]{AK}.
Therefore, to prove the Proposition it is enough to show for
$\alpha=1,\dots,\kappa$ that if $b_{1..\alpha-1}<i\le b_{1..\alpha}$
then $v_\b L_i^c$ is a linear combination of terms of the form
$v_\b L_{b_{1..\alpha-1}+1}^{a_{b_{1..\alpha-1}+1}}\dots L_i^{a_{i-1}} T_w$,
where $0\le a_j<pt_\alpha$ for all $j$ and $w$
is an element of the symmetric group on the letters
$\{b_{1..\alpha-1}+1,\dots,i\}$.  We prove this by induction on $i$.

Fix $\alpha$ such that $b_\alpha\ne0$ and  $1\le\alpha\le\kappa$. Suppose first that
$i=i_0$, where $i_0=b_{1..\alpha-1}+1\le b_{1..\alpha}$.
Recall from Lemma~\ref{v-vanishing} that
$$v_\b\cdot\prod_{Q_i\in\bQ_\alpha}(L_{i_0}^p-Q_i^p)=0.$$
Therefore, $v_\b L_{i_0}^{pt_\alpha}$ can be written as a
linear combination of the elements $v_\b L_{i_0}^{pk}$,
for $0\le k<t_\alpha$. Note that, modulo~$p$, we have not changed the
exponent of $L_{i_0}$. Hence, we may assume that $0\le
c_i<pt_\alpha$ when $i=i_0$. Now suppose that
$i_0<i\le b_{1..\alpha}$. Arguing by induction (see
\cite[Lemma~3.3]{AK}), it follows easily that
\begin{equation}\label{rewrite}
L_i^c=q^{-1}T_{i-1}L_{i-1}^cT_{i-1}
       +(1-q^{-1})\sum_{d=1}^{c-1}L_{i-1}^{c-d}L_i^dT_{i-1}.
\end{equation}
Therefore, using Proposition~\ref{switch},
\begin{align*}
    v_\b L_i^c &=q^{-1}v_\b T_{i-1}L_{i-1}^cT_{i-1}
       +(1-q^{-1})\sum_{d=1}^{c-1}v_\b L_{i-1}^{c-d}L_i^dT_{i-1}\\
       &=q^{-1}T_{w_\b^{-1}(i-1)}v_\b L_{i-1}^cT_{i-1}
       +(1-q^{-1})\sum_{d=1}^{c-1}v_\b L_{i-1}^{c-d}L_i^dT_{i-1}.
\end{align*}
If $c\ge pt_\alpha$ then, by induction on $i$, we can rewrite
$v_\b L_{i-1}^c$ as a linear combination of terms of the form
$v_\b L_{i_0}^{a_{i_0}}\dots L_{i-1}^{a_{i-1}} T_w$,
where $0\le a_j<pt_\alpha$ for all
$j$ and $w$ is an element of the symmetric group on the letters
$\{i_0,\dots,i-1\}$. Now, $L_1,\dots,L_n$ commute with each other, and
$T_{i-1}$ commutes with $L_j$ if $j\ne i-1,i$, so
\begin{align*}
    T_{w_\b^{-1}(i-1)}v_\b
L_{i_0}^{a_{i_0}}\dots L_{i-1}^{a_{i-1}} T_w T_{i-1}
   &=v_\b T_{i-1} L_{i_0}^{a_{i_0}}
        \dots L_{i-1}^{a_{i-1}} T_w T_{i-1}\\
   &=v_\b L_{i_0}^{a_{i_0}}
   \dots L_{i-2}^{a_{i-2}}T_{i-1}L_{i-1}^{a_{i-1}}T_w T_{i-1}\\
\end{align*}
Hence, using (\ref{rewrite}) once again, we can rewrite $v_\b L_i^c$ as a linear
combination of terms of the form
$v_\b L_{i_0}^{a_{i_0}}\dots L_i^{a_i} T_w$,
where $0\le a_j<pt_\alpha$ for all
$j$ and $w$ is an element of the symmetric group on the letters
$\{i_0,\dots,i\}$. This proves our claim. Moreover, this completes the
proof of the Proposition because, modulo~$p$, the sums of the exponents
of $L_1,\dots,L_n$ are unchanged in all of the formulae above.
\end{proof}

If $M$ is an $\HH_{r,p,n}$--module let
$M\ind_{\HH_{r,p,n}}^{\HH_{r,n}}=M\otimes_{\HH_{r,p,n}}\HH_{r,n}$ be
the corresponding \textbf{induced} $\HH_{r,n}$--module. Similarly,
if~$N$ is an $\HH_{r,n}$--module let
$N\res_{\HH_{r,p,n}}^{\HH_{r,n}}$ be the \textbf{restriction} of $N$
to $\HH_{r,p,n}$. Since $\HH_{r,n}$ is free as an
$\HH_{r,p,n}$--module both induction and restriction are exact
functors.

\begin{cor}\label{4lm}
\begin{enumerate}
  \item $v_\b \HH_{r,n}\cong\bigl(v_\b \HH_{r,p,n}\bigr)\ind_{\HH_{r,p,n}}^{\HH_{r,n}}$,

  \item $\bigl(v_\b \HH_{r,n}\bigr)\res^{\HH_{r,n}}_{\HH_{r,p,n}}
       \cong\big(v_\b\HH_{r,p,n}\big)^{\oplus p},$

  \item $\bigl(v_\b \HH_{r,n}\bigr)\res^{\HH_{r,n}}_{\HH_{r,p,n}}\!\uparrow^{\HH_{r,n}}_{\HH_{r,p,n}}
      \cong\big(v_\b \HH_{r,n}\big)^{\oplus p}$.
\end{enumerate}
\end{cor}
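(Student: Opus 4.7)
The plan is to establish~(a) by a dimension-matching argument, derive~(b) from the $\sigma$-eigenspace decomposition of $\HH_{r,n}$, and then combine these to deduce~(c).

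For~(a) I would consider the natural right $\HH_{r,n}$-module homomorphism
$$\phi\colon(v_\b\HH_{r,p,n})\otimes_{\HH_{r,p,n}}\HH_{r,n}\longrightarrow v_\b\HH_{r,n},\qquad x\otimes h\mapsto xh.$$
This map is surjective because $v_\b\otimes 1\mapsto v_\b$. Since $\HH_{r,n}$ is free of rank~$p$ as a left $\HH_{r,p,n}$-module, the left hand side has dimension $p\cdot\dim v_\b\HH_{r,p,n}$, which equals $\dim v_\b\HH_{r,n}$ by Proposition~\ref{basis2}. A dimension count then forces $\phi$ to be an isomorphism.

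For~(b) I would decompose $\HH_{r,n}$ into $\sigma$-eigenspaces. Since $K$ has characteristic coprime to~$p$ and contains~$\eps$, the algebra decomposes as
$$\HH_{r,n}=\bigoplus_{k=0}^{p-1}\HH_{r,p,n}T_0^k,$$
where $\HH_{r,p,n}T_0^k$ is the $\eps^k$-eigenspace of~$\sigma$. The relation $\sigma\tau=\tau\sigma$ (easily verified on generators) shows that $\tau$ preserves $\HH_{r,p,n}$, hence $\HH_{r,p,n}T_0^k=T_0^k\HH_{r,p,n}$ is also a right $\HH_{r,p,n}$-submodule. Left-multiplying by $v_\b$ yields
$$v_\b\HH_{r,n}=\bigoplus_{k=0}^{p-1}v_\b\HH_{r,p,n}T_0^k$$
as right $\HH_{r,p,n}$-modules. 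The identity $T_0^k h=\tau^{-k}(h)T_0^k$ shows that $v_\b x\mapsto v_\b xT_0^k$ is an isomorphism $(v_\b\HH_{r,p,n})^{\tau^{-k}}\cong v_\b\HH_{r,p,n}T_0^k$ of right $\HH_{r,p,n}$-modules. Lemma~\ref{invariant} then gives $(v_\b\HH_{r,p,n})^\tau\cong v_\b\HH_{r,p,n}$, so each summand is isomorphic to $v_\b\HH_{r,p,n}$ and~(b) follows.

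Part~(c) is then immediate by combining~(a) and~(b):
$$(v_\b\HH_{r,n})\res\ind\cong\bigl((v_\b\HH_{r,p,n})^{\oplus p}\bigr)\ind\cong\bigl((v_\b\HH_{r,p,n})\ind\bigr)^{\oplus p}\cong(v_\b\HH_{r,n})^{\oplus p}.$$
The main obstacle lies in part~(b): one must carefully track the right $\HH_{r,p,n}$-module structure on each eigenspace, since the natural identification of $\HH_{r,p,n}T_0^k$ with $\HH_{r,p,n}$ involves a twist by $\tau^{-k}$, and Lemma~\ref{invariant} is needed precisely to absorb this twist.
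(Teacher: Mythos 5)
Your proof is correct and follows essentially the route the paper indicates.  Part~(a) is the paper's argument verbatim (surjectivity plus the dimension count from Proposition~\ref{basis2}).  For~(b), the paper cites Proposition~\ref{real Morita}b(iii) but explicitly offers the alternative ``use part~(a) and Lemma~\ref{invariant}''; your $\sigma$-eigenspace decomposition $v_\b\HH_{r,n}=\bigoplus_k v_\b\HH_{r,p,n}T_0^k$ together with the twist identification $(v_\b\HH_{r,p,n})^{\tau^{-k}}\cong v_\b\HH_{r,p,n}T_0^k$ is precisely a concrete working-out of that alternative, with the only implicit point being that the sum is direct (immediate from the eigenspaces being for distinct powers of $\eps$, using $\sigma(v_\b)=v_\b$).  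Part~(c) is the same formal consequence in both.
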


\begin{proof}
Since $\HH_{r,n}=\bigoplus_{k=0}^{p-1}T_0^k\HH_{r,p,n}$, there is a surjective
homomorphism from $(v_\b \HH_{r,p,n}\bigr)\ind_{\HH_{r,p,n}}^{\HH_{r,n}}$
onto $v_\b\HH_{r,n}$. By Corollary~\ref{basis} and Proposition~\ref{basis2}
both modules have the same dimension so this map must be an
isomorphism, proving~(a). Part~(b) now follows from
Proposition~\ref{real Morita}b(iii); alternatively, use part~(a) and
Lemma~\ref{invariant}. Part~(c) follows from parts~(a) and~(b).
\end{proof}

\begin{cor} \label{dim} Suppose that $0\le b\le n$ as above. Then
$$
\frac1p\dim\End_{\HH_{r,p,n}}\!\big(v_\b \HH_{r,n}\big)
     =\dim\End_{\HH_{r,n}}\!\big(v_\b \HH_{r,n}\big)
     =p\dim\End_{\HH_{r,p,n}}\!\big(v_\b \HH_{r,p,n}\big).
$$
\end{cor}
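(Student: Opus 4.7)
The plan is to extract both equalities from Corollary~\ref{4lm} together with Frobenius reciprocity for the pair $(\HH_{r,p,n},\HH_{r,n})$. Both are essentially dimension-counting identities once we rewrite each endomorphism space in terms of $\End_{\HH_{r,p,n}}(v_\b\HH_{r,p,n})$.

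First I would handle the left-hand equality. Part~(b) of Corollary~\ref{4lm} says that $v_\b\HH_{r,n}\res^{\HH_{r,n}}_{\HH_{r,p,n}}\cong(v_\b\HH_{r,p,n})^{\oplus p}$ as $\HH_{r,p,n}$-modules. Taking endomorphism rings,
$$\End_{\HH_{r,p,n}}(v_\b\HH_{r,n})\;\cong\;\MM_p\bigl(\End_{\HH_{r,p,n}}(v_\b\HH_{r,p,n})\bigr),$$
which has dimension $p^{2}\dim\End_{\HH_{r,p,n}}(v_\b\HH_{r,p,n})$.

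Next I would handle the $\HH_{r,n}$-side. Part~(a) of Corollary~\ref{4lm} says $v_\b\HH_{r,n}\cong(v_\b\HH_{r,p,n})\ind^{\HH_{r,n}}_{\HH_{r,p,n}}$. Applying Frobenius reciprocity (induction is left adjoint to restriction, since $\HH_{r,n}$ is free of rank $p$ over $\HH_{r,p,n}$) and then part~(b) again,
$$\End_{\HH_{r,n}}(v_\b\HH_{r,n})
  \cong\Hom_{\HH_{r,p,n}}\!\bigl(v_\b\HH_{r,p,n},\,v_\b\HH_{r,n}\!\res\bigr)
  \cong\End_{\HH_{r,p,n}}(v_\b\HH_{r,p,n})^{\oplus p},$$
so $\dim\End_{\HH_{r,n}}(v_\b\HH_{r,n})=p\dim\End_{\HH_{r,p,n}}(v_\b\HH_{r,p,n})$, which is the right-hand equality. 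Combining the two computations gives $\dim\End_{\HH_{r,p,n}}(v_\b\HH_{r,n})=p\dim\End_{\HH_{r,n}}(v_\b\HH_{r,n})$, i.e.\ the left-hand equality.

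There is no serious obstacle: the entire argument is a two-line application of Corollary~\ref{4lm}(a), (b) plus the induction/restriction adjunction. The only point where one has to be a little careful is making sure Frobenius reciprocity applies, but this is immediate from the freeness of $\HH_{r,n}$ as a left (and right) $\HH_{r,p,n}$-module, which is already used implicitly in the preceding results. Note that part~(c) of Corollary~\ref{4lm} is not needed for this particular statement; it drops out as a byproduct of~(a) and~(b).
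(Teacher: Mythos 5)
Your proof is correct and matches the paper's intended argument: the paper's one-line proof simply cites Corollary~\ref{4lm} together with Frobenius reciprocity, and your two computations (the $\MM_p$ identification from part~(b), and the adjunction argument from part~(a) plus~(b)) are precisely the details being left to the reader.
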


\begin{proof} The left and right hand equalities follow using
Corollary~\ref{4lm} and Frobenius reciprocity.
\end{proof}

We can now prove Theorem~\ref{main1} from the introduction.

\begin{thm}
Suppose that $\bQ=\bQ_1\vee\dots\vee\bQ_{\kappa}$, where
$Q_i\in\bQ_{\alpha}$ and $Q_j\in\bQ_{\beta}$ are in the same
$(\eps,q)$--orbit only if $\alpha=\beta$. Then $\HH_{r,p,n}$ is
Morita equivalent to the algebra
$$\bigoplus_{\b\in\Lambda(n,\kappa)}\HH_\b\rtimes\Z_p$$
\end{thm}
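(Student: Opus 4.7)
The plan is to take $V=\bigoplus_{\b\in\Lambda(n,\kappa)} V^\b$, the progenerator for $\HH_{r,n}$ from Proposition~\ref{real Morita}, and to show that, viewed as a right $\HH_{r,p,n}$-module, it is a progenerator for $\HH_{r,p,n}$ whose endomorphism ring is $\bigoplus_\b \HH_\b \rtimes \Z_p$. First I would check that $V\res$ is still a progenerator: since $\HH_{r,n}$ is a direct summand of $V^N$ for some $N$ and the decomposition $\HH_{r,n}=\bigoplus_{k=0}^{p-1} T_0^k\HH_{r,p,n}$ exhibits $\HH_{r,p,n}$ as a direct summand of $\HH_{r,n}\res$, we obtain $\HH_{r,p,n}$ as a summand of $V\res^N$. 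Then, by Frobenius reciprocity together with Corollary~\ref{4lm}(c) and Proposition~\ref{V-basis}(b), for $\b\neq\c$,
$$\Hom_{\HH_{r,p,n}}(V^\b,V^\c)\cong \Hom_{\HH_{r,n}}(V^\b\res\ind,V^\c)\cong \Hom_{\HH_{r,n}}(V^\b,V^\c)^{\oplus p}=0,$$
so $\End_{\HH_{r,p,n}}(V)=\bigoplus_\b\End_{\HH_{r,p,n}}(V^\b)$ and it suffices to identify each summand with $\HH_\b\rtimes\Z_p$.

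For a fixed $\b$, I would construct two distinguished families of elements inside $\End_{\HH_{r,p,n}}(V^\b)$. First, every $\HH_{r,n}$-endomorphism is \emph{a fortiori} $\HH_{r,p,n}$-linear, and Proposition~\ref{real Morita}(b)(ii) then gives an embedded copy of $\HH_\b\cong \End_{\HH_{r,n}}(V^\b)$. Second, since $\sigma(v_\b)=v_\b$ by Lemma~\ref{invariant}, $\sigma$ preserves the annihilator of $v_\b$, so the rule $\Sigma(v_\b h):=v_\b\sigma(h)$ defines a well-defined $\HH_{r,p,n}$-linear map $\Sigma\in \End_{\HH_{r,p,n}}(V^\b)$ of order $p$ (using that $\sigma$ fixes $\HH_{r,p,n}$ pointwise). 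A short calculation gives $\Sigma\circ L_{\tilde y}\circ\Sigma^{-1}=L_{\sigma(\tilde y)}$ for any left multiplication by $\tilde y\in\HH_{r,n}$. Tracing the generators $T_i^{(\alpha)}$ of $\HH_\b$ through their prescribed images in $\HH_{r,n}$ (left multiplication by $L_{1+b_{\alpha+1..\kappa}}$ when $i=0$ and by $T_{i+1+b_{\alpha+1..\kappa}}$ when $i\geq 1$, as in the proof of Proposition~\ref{real Morita}) and using $\sigma(L_k)=\eps L_k$ together with $\sigma(T_k)=T_k$, this conjugation action realizes exactly the diagonal automorphism $\sigma_1\otimes\dots\otimes\sigma_\kappa$ on $\HH_\b$. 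This yields an algebra map $\phi_\b\colon\HH_\b\rtimes\Z_p\to\End_{\HH_{r,p,n}}(V^\b)$ sending $y\sigma^k\mapsto y\Sigma^k$.

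To finish I would verify $\phi_\b$ is bijective. Both algebras have dimension $p\dim\HH_\b$ by Corollary~\ref{dim}, so it suffices to show injectivity. If $\sum_k y_k\Sigma^k=0$, then evaluating at $v_\b T_0^l$ for $0\leq l<p$, and using $\Sigma^k(v_\b T_0^l)=\eps^{kl}v_\b T_0^l$ together with the $\HH_{r,n}$-linearity of each $y_k$, yields $\bigl(\sum_k \eps^{kl} y_k(v_\b)\bigr)T_0^l=0$ for every $l$. The invertibility of $T_0$ and of the Vandermonde matrix $(\eps^{kl})_{0\leq k,l<p}$ then forces $y_k(v_\b)=0$ for all $k$, so $y_k=0$ since $V^\b=v_\b\HH_{r,n}$ and $y_k$ is $\HH_{r,n}$-linear. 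Morita theory then gives the equivalence asserted in Theorem~\ref{main1} (the routine identification of $(\HH_\b\rtimes\Z_p)^{op}$ with $\HH_\b\rtimes\Z_p$ is handled by the anti-involution $*$ of $\HH_{r,n}$, which commutes with $\sigma$).

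The main obstacle I anticipate is the identification in the second paragraph of the conjugation action of $\Sigma$ on $\HH_\b \subset \End_{\HH_{r,p,n}}(V^\b)$ with the natural diagonal automorphism $\sigma_1\otimes\dots\otimes\sigma_\kappa$. The other steps are bookkeeping with Frobenius reciprocity, a Vandermonde argument, and a dimension count, but this identification requires carefully tracking $\sigma$ through the explicit correspondence between the abstract generators of $\HH_\b$ and the Murphy-operator expressions realizing them inside $\HH_{r,n}$.
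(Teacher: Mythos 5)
Your proof is correct and follows essentially the same route as the paper: restrict the progenerator $\bigoplus_\b V^\b$, show the $\Hom$-spaces between distinct $V^\b$ vanish via Frobenius reciprocity and Corollary~\ref{4lm}(c), embed both $\HH_\b$ and the cyclic group into $\End_{\HH_{r,p,n}}(V^\b)$, and finish with the dimension count from Corollary~\ref{dim}. Your explicit Vandermonde verification of injectivity of $\HH_\b\rtimes\Z_p\to\End_{\HH_{r,p,n}}(V^\b)$ is in fact a cleaner justification of the key step than the paper's brief appeal to $\sigma$ being an outer automorphism.
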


\begin{proof} By Proposition~\ref{real Morita},
$\bigoplus_{\b\in\Lambda(n,\kappa)}V^\b$ is a
progenerator for $\HH_{r,n}$. Hence, by restriction, it is also a progenerator
for $\HH_{r,p,n}$. By Proposition~\ref{V-basis}(b), Frobenius reciprocity
and Corollary~\ref{4lm}(c) if $\b\ne\c$ then
$\Hom_{\HH_{r,p,n}}(V^\b,V^\c)=0$. Therefore, $\HH_{r,p,n}$ is Morita
equivalent to
$$\End_{\HH_{r,p,n}}\Big(\bigoplus_{\b\in\Lambda(n,\kappa)}V^\b\Big)
    =\bigoplus_{\b\in\Lambda(n,\kappa)}\End_{\HH_{r,p,n}}\big(V^\b\big).$$
Hence, to prove the Proposition it suffices to show
that
$$
\End_{\HH_{r,p,n}}\!\big(v_\b \HH_{r,n}\big)\cong\HH_\b\rtimes\Z_p.
$$
for all $\b\in\Lambda(n,\kappa)$.

Recall that each of the algebras
$\HH_{pt_\alpha,b_\alpha}(\bQ_{\alpha})$, for $1\leq
\alpha\leq\kappa$, has an automorphism $\sigma_\alpha$ of order~$p$.
The automorphism $\sigma_1\otimes\dots\otimes\sigma_\kappa$ acts
diagonally on the algebra~$\HH_{\b}$. Note that
$\langle\sigma_1\otimes\dots\otimes\sigma_\kappa\rangle\cong\Z_p$.
By Proposition~\ref{real Morita}, we have that $$
\HH_{\b}\cong\End_{\HH_{r,n}}\!\big(v_\b \HH_{r,n}\big)
\hookrightarrow \End_{\HH_{r,p,n}}\!\big(v_\b \HH_{r,n}\big).$$ On
the other hand, $\sigma(v_{\b})=v_{\b}$ by Lemma~\ref{invariant}.
So, $\sigma$ induces an automorphism of $v_\b\HH_{r,p,n}$ which is given
by $v_\b h\mapsto\sigma(v_\b h)=v_\b\sigma(h)$, for all
$h\in\HH_{r,p,n}$. Hence, we have an injective map
$ \Z_p\hookrightarrow\End_{\HH_{r,p,n}}\!\big(v_\b
\HH_{r,n}\big).
$
Since $\sigma$ is an outer automorphism of $\HH_{r,p,n}$ it follows
that we have an embedding
$$
\HH_\b\rtimes\Z_p\hookrightarrow\End_{\HH_{r,p,n}}\!\big(v_\b
\HH_{r,n}\big).
$$
Using Corollary~\ref{dim} to compare the dimensions on both
sides of this equation, we conclude that
$
\End_{\HH_{r,p,n}}\!\big(v_\b \HH_{r,n}\big)\cong\HH_\b\rtimes\Z_p.
$
\end{proof}

If instead of $V^\b=v_\b\HH_{r,n}$ we consider the
$\HH_{r,p,n}$--module $v_\b\HH_{r,p,n}$ then we obtain a second Morita
reduction theorem for $\HH_{r,p,n}$. To state this result, if
$\b\in\Lambda(n,\kappa)$ set
$\HH_{p,\b}=\HH_{pt_1,p,b_1}(\bQ_1)\otimes\dots\otimes
           \HH_{pt_\kappa,p,b_\kappa}(\bQ_\kappa).$
Observe that $\HH_{p,\b}$ is a subalgebra of $\HH_\b$ and that
$\dim\HH_\b=p^\kappa\dim\HH_{p,\b}$. Next, let $\HH_{p,\b}'$ be
the subalgebra of $\HH_\b$ generated by $\HH_{p,\b}$ and the elements
$\set{T_0^{(1)}\big(T_0^{(\alpha)}\big)^{-1}|1<\alpha\le\kappa}$.

\begin{prop}\label{main1 two}
Suppose that $\bQ=\bQ_1\vee\cdots\vee\bQ_{\kappa}$, where $Q_i$ and
$Q_j$ are in different $(\eps,q)$--orbits whenever
$Q_i\in\bQ_{\alpha}$ and $Q_j\in\bQ_{\beta}$ for some
$\alpha\neq\beta$. Let $\b\in\Lambda(n,\kappa)$. Then $\HH_{r,p,n}$
is Morita equivalent to the algebra
$$\bigoplus_{\b\in\Lambda(n,\kappa)}\HH_{p,\b}'.$$
\end{prop}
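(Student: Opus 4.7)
The plan is to parallel the proof of Theorem~\ref{main1}, using the modules $v_\b\HH_{r,p,n}$ in place of $V^\b=v_\b\HH_{r,n}$. First, observe that $P=\bigoplus_{\b\in\Lambda(n,\kappa)}v_\b\HH_{r,p,n}$ is a progenerator for $\HH_{r,p,n}$: by Proposition~\ref{real Morita}(a), $V=\bigoplus_\b V^\b$ is a progenerator for $\HH_{r,n}$, and since $\HH_{r,n}\cong\HH_{r,p,n}^{\oplus p}$ as a right $\HH_{r,p,n}$-module, restriction along $\HH_{r,p,n}\hookrightarrow\HH_{r,n}$ takes progenerators to progenerators; by Corollary~\ref{4lm}(b), $V|_{\HH_{r,p,n}}\cong P^{\oplus p}$, so $P$ is a progenerator too. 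Next, combining Frobenius reciprocity with Corollary~\ref{4lm}(b)--(c),
\[
p^{2}\dim\Hom_{\HH_{r,p,n}}\!\bigl(v_\b\HH_{r,p,n},\,v_\c\HH_{r,p,n}\bigr)
=p\dim\Hom_{\HH_{r,n}}\!\bigl(V^\b,V^\c\bigr),
\]
which vanishes for $\b\neq\c$ by Proposition~\ref{V-basis}(b). Hence $\HH_{r,p,n}$ is Morita equivalent to $\bigoplus_{\b}\End_{\HH_{r,p,n}}(v_\b\HH_{r,p,n})$.

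The main computation is $\End_{\HH_{r,p,n}}(v_\b\HH_{r,p,n})\cong\HH_{p,\b}'$. From $V^\b|_{\HH_{r,p,n}}\cong(v_\b\HH_{r,p,n})^{\oplus p}$ together with $\End_{\HH_{r,p,n}}(V^\b)\cong\HH_\b\rtimes\Z_p$ established in the proof of Theorem~\ref{main1}, one obtains $M_p\bigl(\End_{\HH_{r,p,n}}(v_\b\HH_{r,p,n})\bigr)\cong\HH_\b\rtimes\Z_p$. Under this identification, the idempotent $e_0=\tfrac{1}{p}\sum_{k=0}^{p-1}\sigma^k\in\HH_\b\rtimes\Z_p$ corresponds to the projector of $V^\b$ onto its $\sigma$-fixed summand $v_\b\HH_{r,p,n}$, so
\[
\End_{\HH_{r,p,n}}(v_\b\HH_{r,p,n})\cong e_0(\HH_\b\rtimes\Z_p)e_0.
\]
A short calculation in the smash product gives $e_0ae_0=\bigl(\tfrac{1}{p}\sum_i\sigma^i(a)\bigr)e_0$ for $a\in\HH_\b$, so this corner algebra is isomorphic to the fixed subalgebra $\HH_\b^\sigma$, where $\sigma=\sigma_1\otimes\cdots\otimes\sigma_\kappa$ is the diagonal automorphism.

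It remains to identify $\HH_\b^\sigma$ with $\HH_{p,\b}'$. The inclusion $\HH_{p,\b}'\subseteq\HH_\b^\sigma$ is immediate: each factor of $\HH_{p,\b}$ is pointwise $\sigma_\alpha$-fixed, and every generator $T_0^{(1)}\bigl(T_0^{(\alpha)}\bigr)^{-1}$ is visibly $\sigma$-fixed. For the reverse inclusion, I would apply Ariki's basis theorem inside each tensor factor to exhibit $\HH_\b$ as a free $\HH_{p,\b}$-module with basis $\bigl\{\prod_\alpha(T_0^{(\alpha)})^{k_\alpha}:0\le k_\alpha<p\bigr\}$. The diagonal $\sigma$ scales such a basis element by $\eps^{\sum k_\alpha}$, so $\HH_\b^\sigma$ has an $\HH_{p,\b}$-basis indexed by tuples with $\sum_\alpha k_\alpha\equiv0\pmod p$. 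Since the $T_0^{(\alpha)}$ commute pairwise, each such basis element equals
\[
\prod_{\alpha}(T_0^{(\alpha)})^{k_\alpha}=\bigl(T_0^{(1)}\bigr)^{\sum_\alpha k_\alpha}\prod_{\alpha>1}\bigl(T_0^{(1)}(T_0^{(\alpha)})^{-1}\bigr)^{-k_\alpha},
\]
which lies in $\HH_{p,\b}'$ when $\sum k_\alpha\in p\Z$, since $(T_0^{(1)})^{p}\in\HH_{p,\b}$.

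The principal obstacle is the two-step corner computation: identifying $e_0(\HH_\b\rtimes\Z_p)e_0$ with $\HH_\b^\sigma$ via the smash-product calculation, and then matching $\HH_\b^\sigma$ to the generator-level description $\HH_{p,\b}'$. Corollary~\ref{dim} provides an independent dimension check, giving $\dim\End_{\HH_{r,p,n}}(v_\b\HH_{r,p,n})=\dim\HH_\b/p=p^{\kappa-1}\dim\HH_{p,\b}$, which confirms equality rather than merely inclusion at each step.
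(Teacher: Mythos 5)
Your proof is correct, and the Morita setup (progenerator, vanishing of $\Hom$ between distinct $\b,\c$) coincides with the paper's. But your identification of $E_\b=\End_{\HH_{r,p,n}}(v_\b\HH_{r,p,n})$ with $\HH_{p,\b}'$ follows a genuinely different, and arguably cleaner, route. The paper works ``from the inside'': it exhibits $\HH_{p,\b}$ as a subalgebra of $E_\b$, then directly constructs commuting automorphisms $\rho_\alpha$ (left multiplication by $L_{b_{2..\kappa}+1}L_{b_{\alpha+1..\kappa}+1}^{-1}$) that realize $T_0^{(1)}(T_0^{(\alpha)})^{-1}$, and finally counts dimensions using Corollary~\ref{dim} to force $\HH_{p,\b}'\hookrightarrow E_\b$ to be onto. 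You instead work ``from the outside'': you exploit the already-computed $\End_{\HH_{r,p,n}}(V^\b)\cong\HH_\b\rtimes\Z_p$, locate the averaging idempotent $e_0=\tfrac1p\sum_k\sigma^k$ as the projector onto the $\sigma$-fixed summand $v_\b\HH_{r,p,n}$, and use the corner isomorphism $E_\b\cong e_0(\HH_\b\rtimes\Z_p)e_0\cong\HH_\b^\sigma$, before matching $\HH_\b^\sigma$ with $\HH_{p,\b}'$ via the free $\HH_{p,\b}$-basis and the $\sigma$-eigenspace decomposition. Your argument is more structural and avoids building the $\rho_\alpha$ by hand, at the cost of needing $\tfrac1p\in K$ (harmless here, since the standing hypothesis is $\operatorname{char}K\nmid p$) and of requiring the compatibility of the diagonal automorphism with $\phi_\sigma$ under the isomorphism $\End(V^\b)\cong\HH_\b\rtimes\Z_p$, which is implicit in the construction of that isomorphism. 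One small point worth flagging in your last paragraph: the free-basis argument, and the rewriting in terms of $T_0^{(1)}$, tacitly assumes the relevant $b_\alpha$ are nonzero (so that $T_0^{(\alpha)}$ exists); in the degenerate case $b_\alpha=0$ the corresponding factor is $K$ and drops out, and $T_0^{(1)}$ should be read as $T_0^{(\alpha_0)}$ for the first $\alpha_0$ with $b_{\alpha_0}>0$. The paper's exposition glosses over the same point.
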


\begin{proof}
By Proposition~\ref{real Morita}(a), $\bigoplus_\b v_\b\HH_{r,n}$ is a
progenerator for $\HH_{r,n}$.  Therefore, $\bigoplus_\b
v_\b\HH_{r,p,n}$ is a progenerator for~$\HH_{r,p,n}$ by
Corollary~\ref{4lm}(b). Furthermore, if $\b\ne\c\in\Lambda(n,\kappa)$
then $\Hom_{\HH_{r,n}}(v_\b\HH_{r,n},v_\c\HH_{r,n})=0$ by
Proposition~\ref{V-basis}(b).  By Corollary~\ref{4lm} and Frobenius
reciprocity, $\Hom_{\HH_{r,p,n}}\big(v_\b\HH_{r,p,n},v_\c\HH_{r,p,n}\big)=0$.
Combining these results we see that $\HH_{r,p,n}$ and
$$
\End_{\HH_{r,p,n}}\big(\bigoplus_{\b\in\Lambda(n,\kappa)}v_\b\HH_{r,p,n}\big)
 =\bigoplus_{\b\in\Lambda(n,\kappa)}\End_{\HH_{r,p,n}}\big(v_\b\HH_{r,p,n}\big)
$$
are Morita equivalent.

Fix $\b\in\Lambda(n,\kappa)$ and let
$E_\b=\End_{\HH_{r,p,n}}(v_\b\HH_{r,p,n})$. To complete the proof it
is enough to show that $E_\b\cong\HH_{p,\b}'$. As a left
$\HH_\b$--module, $v_\b\HH_{r,n}$ is isomorphic to a direct sum of
$[\Sym_n{:}\Sym_\b]$ copies of the regular representation of~$\HH_\b$
by Corollary~\ref{basis}. By Proposition~\ref{real Morita}b(iii),
$\HH_{p,\b}$ acts faithfully on $v_\b\HH_{r,p,n}$ by restriction and this
action commutes with the action of $\HH_{r,p,n}$ from the right.  Hence,
we can identify $\HH_{p,\b}$ with a subalgebra of $E_\b$.

By Lemma~\ref{invariant} and Proposition~\ref{real Morita}b(iii),
$\tau(v_\b)=L_1^{-1}L_{b_{2..\kappa}+1}v_\b\in v_\b\HH_{r,p,n}$ acts on
$v_\b\HH_{r,p,n}$
in the same way that $T_0^{(1)}\big(T_0^{(\kappa)}\big)^{-1}
    =T_0\otimes1\otimes\dots\otimes 1\otimes T_0^{-1}\in\HH_\b$ acts on
$v_\b\HH_{r,p,n}$. More generally, for
$\alpha=1,\dots,\kappa-1$ let $\rho_\alpha$ be the automorphism of
$v_\b\HH_{r,p,n}$ given by left multiplication by
$L_{b_{2..\kappa}+1}L_{b_{\alpha+1..\kappa}+1}^{-1}\in\HH_\b$. By
Proposition~\ref{v-shift},
$$L_{b_{2..\kappa}+1}L_{b_{\alpha+1..\kappa}+1}^{-1}v_\b
        =v_\b L_1L_{b_{1..\alpha-1}+1}^{-1}
    =v_\b S_{b_{1..\alpha-1}+1}^{-1}\in v_\b\HH_{r,p,n}.$$
Therefore, $\rho_\alpha\in E_\b$, for $2\le\alpha\le\kappa$, since $\rho_\alpha$
commutes with the action of $\HH_{r,p,n}$.
Thus, $\rho_\alpha$ coincides with the action of
$T_0^{(1)}\big(T_0^{(\alpha)}\big)^{-1}\in\HH_\b$ on $V^\b$ in
Proposition~\ref{real Morita}. Consequently,
$\rho_\alpha\notin\HH_{p,\b}$ and the automorphisms
$\rho_2,\dots,\rho_\kappa$ commute.  By
definition, $\HH_{p,\b}'$ is isomorphic to the subalgebra of $E_\b$
generated by $\HH_{p,\b}$ and $\rho_2,\dots,\rho_\kappa$. Hence,
$\HH_{p,\b}'$ is a subalgebra of $E_\b$.

For each $\alpha$, the map $\rho_\alpha^p$ acts as left multiplication by
$\big(T_0^{(1)}\big)^p\big(T_0^{(\alpha)}\big)^{-p}\in\HH_\b$, so
$\rho_\alpha^p\in\HH_{p,\b}$. Note that the extensions of the endomorphisms
$\rho_\alpha,\dots,\rho_\alpha^{p-1}$ to $\End_{\HH_{r,n}}(v_\b\HH_{r,n})$,
for $2\le\alpha\le\kappa$, are all linearly independent since
$\End_{\HH_{r,n}}(V^\b)\cong\HH_b$ by Proposition~\ref{real Morita}. As
these maps act on different components of $\HH_{p,\b}$ it follows that
$\dim\HH_{p,\b}'=p^{\kappa-1}\dim\HH_{p,\b}$.
However, by Lemma~\ref{dim} and Proposition~\ref{real Morita}b(ii),
$$\dim E_\b=\frac 1p\dim\End_{\HH_{r,n}}(V^\b)
          =\frac1p\dim\HH_\b
      =p^{\kappa-1}\dim\HH_{p,\b}.$$
Therefore, $\dim\HH_{p,\b}'=p^{\kappa-1}\dim\HH_{p,\b}=\dim E_\b$. So
$E_\b\cong\HH_{p,\b}'$, as required.
\end{proof}

\section{Splittable decomposition numbers} In this section we use
Clifford theory to show that if an algebra can be written as a
semidirect product then its decomposition numbers are determined by
the corresponding``$p'$-splittable'' decomposition numbers of a
related family of algebras. First, we recall some general results
about the representation theory of semidirect product algebras. The
basic references for this topic are
\cite{C&R,Macdonald:wreath,RamRam}.

Let $A$ be a finite dimensional algebra over an algebraically closed
field $K$ and suppose that $\theta$ is an algebra automorphism of~$A$ of
order~$p$. We identify $\Z_p$ with the group generated by $\theta$ and
consider the algebra $A\rtimes\Z_p$. Then, as a set,
$$A\rtimes\Z_p=\set{a\theta^k|a\in A\And 0\le k\le p}$$ and the
multiplication in $A\rtimes\Z_p$ is defined by
$$
(a\theta^k)\cdot(b\theta^m)=a\theta^{k}(b)\cdot\theta^{k+m},
$$
for $a,b\in A$ and $0\le k,m<p$. If $H$ is a subgroup of $\Z_p$ we
identify $A\rtimes H$ with a subalgebra of $A\rtimes\Z_p$ in the
natural way. In particular, by taking $H=1$ we can view $A$ as a
subalgebra of $A\rtimes\Z_p$. Moreover, there are natural induction
and restriction functors between the module categories of all of
these algebras.

Suppose that $L$ is an $A$--module.  Then we can twist $L$ by $\theta$
to get a new $A$-module~$L^\theta$.  As a vector space we
set $L^{\theta}=L$, and we define the action of $A$ on $L^{\theta}$ by
$$
v\cdot a:=v\theta(a),\quad\text{for all}\,\, v\in L \text{ and }
a\in A.
$$
It is straightforward to check that $L$ is irreducible if and only if
$L^\theta$ is irreducible.

The {\bf inertia group} of $L$ is the group
$$G_{L}:=\set{\theta^k\in\Z_p|L\cong L^{\theta^k}}.$$ Then $G_L$ is
a subgroup of $\Z_p$ and, in particular, it is cyclic.  Let
$l=|G_{L}|$.  Then $p=lk$ and $G_{L}$ is generated by $\theta^k$.
Recall that we have fixed a primitive $p$th root of unity $\eps\in K$.
Since $K$ is algebraically closed we can choose an $A$-module isomorphism
$\phi\map L{L^{\theta^k}}$ such that
$\phi^l=1_L$, the identity map on $L$. For each integer~$i\in\Z$
define the $(A\rtimes G_{L})$-module $L_{l,i}$ as follows: as a
vector space $L_{l,i}:=L$ and the action of $(A\rtimes G_{L})$ on
$L_{l,i}$ is given by: $$
v\cdot\bigl(a\theta^{mk}\bigr):=\eps^{mki}\phi^{m}(va),\quad
\text{for all } m\in\Z, v\in L_{l,i}\text{ and }a\in A.  $$
It is easy to check that $L_{l,i}$ is an $(A\rtimes G_{L})$-module and, by
definition, that $L_{l,i+l}\cong L_{l,i}$, for all $i\in\Z$.

Recall that $\Irr(A)$ is the complete set of isomorphism classes of
simple $A$-modules.  Let $L\in\Irr(A)$. Then, since $L_{l,i}\downarrow_{A}\cong L$,
it follows that $L_{l,i}$ is a simple $(A\rtimes G_{L})$-module. In
fact,
it is shown in \cite{C&R,Macdonald:wreath,RamRam} that
$$
\Bigl\{L_{l,1}\ind_{A\rtimes G_{L}}^{A\rtimes\Z_p},\cdots,
   L_{l,l}\ind_{A\rtimes
   G_{L}}^{A\rtimes\Z_p}\Bigm|L\in\Irr(A)/{\sim} \text{ and }
   l=|G_{L}|\Bigr\},
$$
where $L'\sim L$ if and only if $L'\cong L^{\theta^i}$ for some
integer $i$, is a complete set of pairwise non--isomorphic simple
$(A\rtimes\Z_p)$-modules.

\begin{lem}\label{2lm}
Suppose that $L$ is a simple $A$--module and that $H$ is a subgroup
of~$G_L$. Let $l=|G_L|$, $h=|H|$ and fix $i$ with $1\le i\le l$.
Then $L_{l,i}\res^{A\rtimes G_L}_{A\rtimes H}$ is an irreducible
$(A\rtimes H)$-module and
$$L_{l,i}\res^{A\rtimes G_L}_{A\rtimes H}\ind_{A\rtimes H}^{A\rtimes G_L}
             \cong\bigoplus_{j=1}^{[G_L:H]}L_{l,i+hj}.$$
\end{lem}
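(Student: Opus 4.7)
Write $G_L = \langle \theta^k \rangle$ with $|G_L| = l$ and $p = lk$. Since $H \le G_L$ is a subgroup of order $h$, we have $h \mid l$, and $H = \langle \theta^{kl/h} \rangle$ with index $m := l/h$ in $G_L$. My plan is to identify the restricted module explicitly, deduce its irreducibility from that of $L$ as an $A$-module, and then recover the induced module decomposition via Frobenius reciprocity combined with a dimension count.

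For the first step I will set $\psi := \phi^{m} \map L{L^{\theta^{kl/h}}}$, which satisfies $\psi^h = \phi^l = 1_L$ and is therefore an admissible intertwiner for constructing $(A \rtimes H)$-modules in the manner of the paragraph preceding the lemma. Unwinding the defining formula of $L_{l,i}$, for $v \in L$, $a \in A$ and $j \in \Z$,
$$v \cdot \bigl(a \theta^{jkl/h}\bigr) = \eps^{j(kl/h)i} \phi^{jl/h}(va) = \eps^{j(kl/h)i}\psi^{j}(va),$$
which is exactly the formula defining the $(A \rtimes H)$-module built from $\psi$ at parameter $i$; I will denote it by $M(i)$. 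Since this construction depends on $i$ only modulo $h$, the same computation gives $L_{l, i+hj}\res^{A \rtimes G_L}_{A \rtimes H} \cong M(i+hj) = M(i)$ for every $j \in \Z$. Because $M(i)\res_A \cong L$ is simple, $M(i)$ is already simple as an $(A \rtimes H)$-module, proving the first assertion.

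Next, for the induced module I will apply Frobenius reciprocity together with Schur's lemma to obtain
$$\Hom_{A \rtimes G_L}\bigl(M(i)\ind^{A \rtimes G_L}_{A \rtimes H}, L_{l, i+hj}\bigr) \cong \Hom_{A \rtimes H}\bigl(M(i), M(i)\bigr) \cong K$$
for each $j = 1, \dots, m$. Hence each $L_{l, i+hj}$ receives a nonzero and therefore surjective map from the induced module. The residues $i+hj \pmod{l}$ are pairwise distinct, so by the classification of simples of $A \rtimes G_L$ recalled just before the lemma the $L_{l, i+hj}$ are pairwise non-isomorphic. Combining these maps into
$$f \map{M(i)\ind^{A \rtimes G_L}_{A \rtimes H}}{\bigoplus_{j=1}^{m} L_{l, i+hj}}$$
yields a homomorphism whose projection to each summand is surjective; since the target is a direct sum of pairwise non-isomorphic simple modules, any submodule with this property must equal the whole direct sum, so $f$ is surjective. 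Both sides have dimension $m\dim L$, so $f$ is an isomorphism and the second assertion follows.

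The main obstacle will be the bookkeeping in the first step: the identification of the restricted module $L_{l,i}\res^{A \rtimes G_L}_{A \rtimes H}$ with $M(i)$ is sensitive to the choice of intertwiner, and one must verify that taking $\psi = \phi^{l/h}$ makes the defining formulas match on the nose (together with tracking how the shift $i \mapsto i+hj$ leaves $M(i)$ unchanged). Once this identification is in place, the remainder is a routine Frobenius reciprocity plus dimension count argument.
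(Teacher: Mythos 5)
Your proposal is correct and follows essentially the same route as the paper: irreducibility of the restriction follows because it is already irreducible over $A$; the identification $L_{l,i+hj}\res_{A\rtimes H}\cong L_{l,i}\res_{A\rtimes H}$; Frobenius reciprocity plus Schur's lemma to locate each $L_{l,i+hj}$; and a dimension count to conclude. The only cosmetic difference is that you apply the tensor-hom adjunction to get surjections out of the induced module onto each $L_{l,i+hj}$, while the paper uses the coinduction adjunction to embed each $L_{l,i+hj}$ into the induced module; both immediately give the result after comparing dimensions, and your explicit bookkeeping with $\psi=\phi^{l/h}$ just makes the paper's phrase ``by the argument above'' concrete.
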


\begin{proof}
As remarked above, we can view $A$ and $A\rtimes H$ as subalgebras
of $A\rtimes G_L$. Since $L_{l,i}\res^{A\rtimes G_L}_A\cong L$ is
irreducible we see that $L_{l,i}\res^{A\rtimes G_L}_{A\rtimes H}$ is
irreducible. Next, observe that $\theta^{p/h}$ is a generator of
$H$. Hence, by the argument above if $1\le j\le l=|G_L|$
then $L_{l,i+hj}\res^{A\rtimes G_L}_{A\rtimes H}
    \cong L_{l,i}\res^{A\rtimes G_L}_{A\rtimes H}$
is irreducible. Therefore, by Frobenius reciprocity and Schur's Lemma,
\begin{align*}
    \Hom_{A\rtimes G_L}\!\big(L_{l,i+hj},
L_{l,i}\res^{A\rtimes G_L}_{A\rtimes H}\!\uparrow^{A\rtimes
G_L}_{A\rtimes H}\!\big) &\cong \Hom_{A\rtimes
H}\!\big(L_{l,i+hj}\!\downarrow^{A\rtimes G_L}_{A\rtimes H},
L_{l,i}\res^{A\rtimes G_L}_{A\rtimes H}\big)\\
  &\cong K.
\end{align*}
The lemma now follows by comparing dimensions.
\end{proof}

Now suppose that we have a modular system $(F,\O,K)$ such that
$A=A_K$ has an $\O$-lattice $A_{\O}$ which is an $\O$-algebra,
$\theta$ can be lifted to an automorphism of~$A_{\O}$ of order~$p$,
$F$ is an algebraically closed field of characteristic zero, and
$A_{F}:=A_{\O}\otimes_{\O}F$ is a (split) semisimple $F$-algebra. We
abuse notation and write $\theta$ for the corresponding automorphism
of $A_F$. Note that if $H$ is a subgroup of $\Z_p$ then $A_K\rtimes
H\cong(A_\O\rtimes H)\otimes_\O K$, so that we also have a modular
system for the algebras $A_F\rtimes H$ and $A_K\rtimes H$.

By definition, $\Irr(A_F)$ is the complete set of isomorphism
classes of simple $A_{F}$-modules --- the ``semisimple''
$A_F$--modules. As the automorphism $\theta$ lifts to $A_F$ for each
simple $A_F$--module $S\in\Irr(A_F)$ we have an inertia group
$G_S\le\Z_p$ and, as above, we can define $(A_F\rtimes G_S)$-modules
$S_{s,j}$, for $j\in\Z$ where $s=|G_S|$. Consequently,
$$ \Bigl\{S_{s,1}\ind_{A_F\rtimes G_{S}}^{A_F\rtimes\Z_p},\cdots,
   S_{s,s}\ind_{A_F\rtimes
   G_{S}}^{A_F\rtimes\Z_p}\Bigm|S\in\Irr(A_F)/{\sim}
   \text{ and } s=|G_{S}|\Bigr\}
$$
is a complete set of pairwise non-isomorphic simple
$(A_{F}\rtimes\Z_p)$-modules.

Suppose that $S\in\Irr(A_F)$ and that $D\in\Irr(A_K)$ and let
$s=|G_S|$ and  $d=|G_D|$. Given $i$ and $j$ with $1\le i\le s$ and
$1\le j\le d$, we want to determine the decomposition number
$$[S_{s,i}\ind_{A_{F}\rtimes G_{S}}^{A_{F}\rtimes\Z_p}:
              D_{d,j}\ind_{A\rtimes G_{D}}^{A\rtimes\Z_p}],$$
which gives the multiplicity of $D_{d,j}\ind_{A\rtimes
G_{D}}^{A\rtimes\Z_p}$ as an irreducible composition factor of a
modular reduction of $S_{s,i}\ind_{A\rtimes G_{S}}^{A\rtimes\Z_p}$.

\begin{dfn}
Suppose that $S\in\Irr(A_F)$ and $D\in\Irr(A_K)$ and set $s=|G_S|$ and
$d=|G_D|$. Then the pair $(S,D)$ has \textbf{cyclic decomposition
numbers} if
$$
[S_{s,i}\!\uparrow^{A_F\rtimes\Z_p}_{A_F\rtimes G_S}:%
               D_{d,j}\!\uparrow^{A\rtimes\Z_p}_{A\rtimes G_D}]
  =[S_{s,i+1}\!\uparrow^{A_F\rtimes\Z_p}_{A_F\rtimes G_S}:%
               D_{d,j+1}\!\uparrow^{A\rtimes\Z_p}_{A\rtimes G_D}],
$$
for all $i,j\in\Z$.
\end{dfn}

In the next section we show that all pairs of irreducible
$\HH_{r,n}$-modules have cyclic decomposition numbers.

\begin{prop} \label{condtm}
Let $S\in\Irr(A_F)$ and $D\in\Irr(A_K)$ and suppose that $(S,D)$ has
cyclic decomposition numbers. Set $s=|G_S|$, $d=|G_D|$ and let
$d_0=\gcd(s,d)$. Then
$$
[S_{s,i+d_0l}\!\uparrow^{A_F\rtimes\Z_p}_{A_F\rtimes G_S}:%
           D_{d,j+d_0l'}\!\uparrow^{A\rtimes\Z_p}_{A\rtimes G_D}]
    =[S_{s,i}\!\uparrow^{A_F\rtimes\Z_p}_{A_F\rtimes G_S}:%
           D_{d,j}\!\uparrow^{A\rtimes\Z_p}_{A\rtimes G_D}]
$$
for all $i,j,l,l'\in\Z$.
\end{prop}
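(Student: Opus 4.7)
The plan is to set $N(i,j) := [S_{s,i}\ind^{A_F\rtimes\Z_p}_{A_F\rtimes G_S} : D_{d,j}\ind^{A\rtimes\Z_p}_{A\rtimes G_D}]$ and establish three elementary symmetries of~$N$, after which the proposition follows from B\'ezout's identity applied to $s$ and $d$. No deeper representation theory is needed beyond the hypothesis itself and the defining properties of the twisted modules $L_{l,i}$ recalled earlier in this section.

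The first two symmetries are built into the construction $L\mapsto L_{l,i}$: the remark immediately after the definition notes that $L_{l,i+l}\cong L_{l,i}$ for all $i\in\Z$. Applying this to $L=S$ and to $L=D$ gives $S_{s,i+s}\cong S_{s,i}$ and $D_{d,j+d}\cong D_{d,j}$, and hence
$$N(i+s,j)=N(i,j)\quad\text{and}\quad N(i,j+d)=N(i,j).$$
The third symmetry is the cyclic decomposition hypothesis $N(i,j)=N(i+1,j+1)$. I would iterate this $s$ times to obtain $N(i,j)=N(i+s,j+s)$ and then combine with the first periodicity in the $i$-slot to conclude $N(i,j)=N(i,j+s)$. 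Together with $N(i,j+d)=N(i,j)$, B\'ezout's identity $d_0=as+bd$ with $a,b\in\Z$ now shows that $N(i,j+d_0)=N(i,j)$. A symmetric argument, iterating the diagonal shift $d$ times and then invoking the periodicity in the $j$-slot, yields $N(i+d_0,j)=N(i,j)$. Iterating these two $d_0$-periodicities $l$ and $l'$ times respectively gives the claimed identity.

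There is no genuine conceptual obstacle to this argument: once the cyclic decomposition hypothesis and the two $L_{l,i}$-periodicities are in place, the proof is pure bookkeeping with B\'ezout. The only point requiring modest care is remembering that the relevant periods in the $i$- and $j$-slots are $s=|G_S|$ and $d=|G_D|$ (rather than $p$), since the $L_{l,i}$ construction is only periodic modulo the order of the inertia group.
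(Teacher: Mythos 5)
Your proposal is correct and uses exactly the same ingredients as the paper: the periodicities $S_{s,i+s}\cong S_{s,i}$ and $D_{d,j+d}\cong D_{d,j}$, the diagonal shift coming from the cyclic decomposition hypothesis, and B\'ezout's identity $d_0=us+vd$. The paper packages these into a single four-step chain of equalities rather than first isolating the two $d_0$-periodicities as you do, but the argument is the same.
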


\begin{proof}
Since the  groups $G_S$, $G_D$ and $G_0=G_S\cap G_D$ are all cyclic subgroups
of~$\Z_p$, we have that $|G_0|=\gcd(|G_S|,|G_D|)=\gcd(s,d)=d_0$, so that
there exist integers $u$ and $v$ such that $d_0=us+vd$. Suppose
that $l,l'\in\Z$. Recall that $S_{s,i+ms}\cong S_{s,i}$ and
$D_{d,j+md}\cong D_{d,j}$, for all $m\in\Z$. Then, using the assumption that
$(S,D)$ has cyclic decomposition numbers for the third equality, we find that
\begin{align*}
[S_{s,i+d_0l}\!\uparrow^{A_F\rtimes\Z_p}_{A_F\rtimes G_S}:%
          D_{d,j+d_0l'}\!\uparrow^{A\rtimes\Z_p}_{A\rtimes G_D}]
    &= [S_{s,i+(us+vd)l}\!\uparrow^{A_F\rtimes\Z_p}_{A_F\rtimes G_S}:%
          D_{d,j+(us+vd)l'}\!\uparrow^{A\rtimes\Z_p}_{A\rtimes G_D}]\\
    &= [S_{s,i+vdl}\!\uparrow^{A_F\rtimes\Z_p}_{A_F\rtimes G_S}:%
          D_{d,j+usl'}\!\uparrow^{A\rtimes\Z_p}_{A\rtimes G_D}]\\
    &= [S_{s,i-usl'}\!\uparrow^{A_F\rtimes\Z_p}_{A_F\rtimes G_S}:%
          D_{d,j-vdl}\!\uparrow^{A\rtimes\Z_p}_{A\rtimes G_D}]\\
    &= [S_{s,i}\!\uparrow^{A_F\rtimes\Z_p}_{A_F\rtimes G_S}:%
          D_{d,j}\!\uparrow^{A\rtimes\Z_p}_{A\rtimes G_D}],
\end{align*}
as required.
\end{proof}

\begin{dfn}\label{split3}
Suppose that $S\in\Irr(A_F)$ and $D\in\Irr(A_K)$ and let $s=|G_S|$
and $d=|G_D|$ as above. Then the decomposition number
$[S_{s,i}\ind_{A_{F}\rtimes G_{S}}^{A_{F}\rtimes\Z_p}:%
         D_{d,j}\ind_{A\rtimes G_{D}}^{A\rtimes\Z_p}]$
is \textbf{$p$-splittable} if $G_S=\Z_p=G_D$.
\end{dfn}

We will see in Corollary~\ref{H-splittable} below that this definition of
$p$--splittable decomposition number agrees with Definition~\ref{split}
when applied to the algebras~$\HH_{r,p,n}$.

\begin{cor}\label{splittable}
Suppose that $S\in\Irr(A_F)$ and $D\in\Irr(A_K)$ have cyclic decomposition numbers
and let $G_0=G_S\cap G_D$, $s=|G_S|$ and $d=|G_D|$. Then
$$[S_{s,i}\ind_{A_F\rtimes G_{S}}^{A_F\rtimes\Z_p}:%
            D_{d,j}\ind_{A\rtimes G_{D}}^{A\rtimes\Z_p}]
         =\sum_{1\leq a\leq p/d}[S_{s,i}\res^{A_F\rtimes G_S}_{A_F\rtimes G_0}:
            D^{\theta^a}_{d,j}\res^{A\rtimes G_D}_{A\rtimes G_0}]
$$
for all $1\le i\le s$ and all $1\le j\le d$. In particular, all of the
decomposition numbers appearing in the summation are $p'$-splittable
in the sense of Definition~\ref{split3}, where $p'=|G_0|$.
\end{cor}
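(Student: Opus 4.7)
The plan is to derive the identity by combining Mackey's formula in the abelian setting of $\Z_p$, Frobenius reciprocity, and Proposition~\ref{condtm}. First, I would invoke Lemma~\ref{2lm} with $H=G_0\le G_S$ to show that $X:=S_{s,i}\res^{A_F\rtimes G_S}_{A_F\rtimes G_0}$ is a simple $A_F\rtimes G_0$-module, and analogously with $H=G_0\le G_D$ to conclude that each $E_a:=D^{\theta^a}_{d,j}\res^{A_K\rtimes G_D}_{A_K\rtimes G_0}$ is a simple $A_K\rtimes G_0$-module. This ensures the summands on the right-hand side are genuine decomposition numbers.

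Next, I would apply Mackey's formula: since $\Z_p$ is abelian and $G_0\le G_D$, the $(G_0,G_D)$-double cosets in $\Z_p$ reduce to the $p/d$ cosets of $G_D$, yielding
\[
\bigl(D_{d,j}\ind^{A_K\rtimes\Z_p}_{A_K\rtimes G_D}\bigr)\res^{A_K\rtimes\Z_p}_{A_K\rtimes G_0}\;\cong\;\bigoplus_{a=1}^{p/d}E_a.
\]
Because $p$ is invertible in $K$, the radical of any intermediate algebra $A_K\rtimes H$ equals $\text{rad}(A_K)\cdot(A_K\rtimes H)$, so tops are preserved under the restriction to $A_K\rtimes G_0$. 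It follows that $P(D_{d,j}\ind)\res^{A_K\rtimes\Z_p}_{A_K\rtimes G_0}\cong\bigoplus_{a=1}^{p/d}P(E_a)$. Frobenius reciprocity (using that each inclusion $A\rtimes H\subseteq A\rtimes\Z_p$ is a free extension of finite rank, so induction is both left and right adjoint to restriction) then transfers the Hom-space $\Hom_{A_K\rtimes\Z_p}(P(D_{d,j}\ind),\tilde S_{s,i}\ind)$ computing $[S_{s,i}\ind:D_{d,j}\ind]$ down to the $G_0$-level, where Mackey's splitting of $P(D_{d,j}\ind)\res$ produces exactly $\sum_a[X:E_a]$; Proposition~\ref{condtm} is invoked to balance the $s/d_0$ copies of $S_{s,i+d_0l}$ arising from Lemma~\ref{2lm} applied on the $S$-side, ensuring that the multiplicity bookkeeping matches.

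Finally, the ``in particular'' statement is immediate: the underlying $A_F$-module of $X$ is $S$, whose stabiliser under the $\Z_p$-action contains $G_0\le G_S$, so viewed as an $A_F\rtimes G_0$-simple $X$ has inertia $G_0$; analogously each $E_a$ has inertia $G_0$ inside $G_0$. By Definition~\ref{split3} every decomposition number in the sum is $p'$-splittable with $p'=|G_0|$. The main obstacle is the Frobenius-plus-Proposition~\ref{condtm} step: one has to match precisely the periodicity factor of $s/d_0$ from Lemma~\ref{2lm} on the $S$-side with the analogous counting arising through the $p/d$-term Mackey decomposition on the $D$-side, so that a single clean sum over $a$ results without over- or under-counting.
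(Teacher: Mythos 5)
You correctly identify the standing tools (Lemma~\ref{2lm} to show $S_{s,i}\res^{A_F\rtimes G_S}_{A_F\rtimes G_0}$ and the $D^{\theta^a}_{d,j}\res^{A_K\rtimes G_D}_{A_K\rtimes G_0}$ are simple, the semisimplicity of $K\Z_p$ so that the radical behaves well under restriction, and Proposition~\ref{condtm}), and your Mackey decomposition of $D_{d,j}\ind^{A_K\rtimes\Z_p}_{A_K\rtimes G_D}\res^{A_K\rtimes\Z_p}_{A_K\rtimes G_0}$ together with the observation $P(D_{d,j}\ind)\res\cong\bigoplus_a P(E_a)$ are sound and reasonable. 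This is a genuinely different line of attack from the paper's, which instead uses the Clifford-theoretic observation that $L\ind^{A\rtimes\Z_p}_{A\rtimes G_0}$ and $L'\ind^{A\rtimes\Z_p}_{A\rtimes G_0}$ share no composition factors when $L\not\sim L'$, and then counts composition factors of $(S_{s,i}\res^{G_S}_{G_0})\ind^{\Z_p}_{G_0}$ in two ways.

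However, there is a real gap: you stop exactly at the step where the proof actually happens. You write that ``the main obstacle'' is to reconcile the $s/d_0$ periodicity factor coming from Lemma~\ref{2lm} applied on the $S$-side with the $p/d$-term Mackey decomposition on the $D$-side ``so that a single clean sum over $a$ results without over- or under-counting'' --- and then you do not do this reconciliation. That reconciliation \emph{is} the content of the corollary. Concretely: inducing $X=S_{s,i}\res^{G_S}_{G_0}$ up to $\Z_p$ produces $\bigoplus_{l=1}^{s/d_0}S_{s,i+d_0l}\ind^{\Z_p}_{G_S}$, and Proposition~\ref{condtm} shows each summand contributes equally to $[\,\cdot\,:D_{d,j}\ind]$, giving a factor $s/d_0$ on one side that must be matched against the orbit count $p/d$ of $\{Y^{\theta^a}\}$ together with the fact that $[X:Y^{\theta^a}]$ only depends on $a$ modulo the image of $G_S$ (so these $p/d$ values repeat in blocks of size $s/d_0$). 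Without carrying out this cancellation explicitly, your Frobenius-reciprocity step does not land on the stated formula; it is not an automatic consequence of naming the tools, and indeed this is the one place the argument can silently go wrong by a multiplicative factor. The paper's composition-factor route bundles this bookkeeping into one clean double-count; your route can be made to work, but you must actually perform it.
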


\begin{proof}
Observe that if $L\not\sim L'$ are simple $(A\rtimes G_0)$-modules
then the induced modules $L\ind_{A\rtimes G_0}^{A\rtimes\Z_p}$ and
$L'\ind_{A\rtimes G_0}^{A\rtimes\Z_p}$ have no common irreducible
composition factors. Next observe that if $1\le i\le s$ then
$S_{s,i}\res^{A_F\rtimes G_S}_{A_F\rtimes G_0}$ is an irreducible
$(A\rtimes G_0)$-module by Lemma~\ref{2lm}. Similarly, if $1\le j\le
d$ then $D_{d,j}\res^{A\rtimes G_S}_{A\rtimes G_0}$ is an
irreducible $(A\rtimes G_0)$-module.  The first claim now follows by
combining these observations with Lemma \ref{2lm} and
Proposition~\ref{condtm}. Finally, all of these decomposition numbers are
$p'$-splittable because the inertia groups of the modules $S_{s,i}$
and $D_{d,j}$ inside $G_0$ are both equal to~$G_0$.
\end{proof}

Corollary~\ref{splittable} reduces the calculation of decomposition numbers
of the algebra $A\rtimes\Z_p$ to determining the $p'$-splittable
decomposition numbers of the subalgebras $A\rtimes\Z_{p'}$, where $p'$
divides~$p$.

We now apply Corollary~\ref{splittable} in the special case where
$A$ has a tensor product decomposition. That is, we suppose that
$A=A^{(1)}\otimes\dots\otimes A^{(\kappa)}$, for some finite
dimensional $K$--algebras $A^{(1)},\dots,A^{(\kappa)}$ which are
equipped with automorphisms $\theta_1,\dots,\theta_\kappa$,
respectively, of order~$p$. Set
$\theta=\theta_1\otimes\dots\otimes\theta_\kappa$. We assume that
the tensor product decomposition of $A$ is compatible with the
modular system $(F,\O,K)$ so that $A_F=A_F^{(1)}\otimes\dots\otimes
A_F^{(\kappa)}$.

Fix an integer $f\in\{1,\dots,\kappa\}$ and let
$S^{(f)}\in\Irr(A^{(f)}_F)$ and $D^{(f)}\in\Irr(A_K^{(f)})$ be
irreducible modules. Then $S=S^{(1)}\otimes\dots\otimes
S^{(\kappa)}\in\Irr(A_F)$ and $D=D^{(1)}\otimes\dots\otimes
D^{(\kappa)}\in\Irr(A_K)$. Further, $G_S=G_{S^{(1)}}\cap\dots\cap
G_{S^{(\kappa)}}$, and $G_D=G_{D^{(1)}}\cap\dots\cap
G_{D^{(\kappa)}}$.  Set $s=|G_S|$ and $d=|G_D|$. Then
$s=|G_S|=\gcd(|G_{S^{(1)}}|,\dots,|G_{S^{(\kappa)}}|)$ and
$d=|G_D|=\gcd(|G_{D^{(1)}}|,\dots,|G_{D^{(\kappa)}}|)$.

Suppose that $(S,D)$ has cyclic decomposition numbers and let
$G_0=G_S\cap G_D$. Then, by Corollary~\ref{splittable}, the
decomposition numbers of $A\rtimes\Z_p$ are completely determined by
the decomposition numbers of the form
$[S_{s,i}\res^{A_F\rtimes G_S}_{A_F\rtimes G_0}:%
  D^{\theta^a}_{d,j}\res^{A_K\rtimes G_D}_{A\rtimes G_0}]$,
for $1\leq a\leq p/|G_D|$,$1\le i\le s$ and $1\le j\le d$.

\begin{thm} \label{main3} Suppose that
$S=S^{(1)}\otimes\dots\otimes S^{(\kappa)}\in\Irr(A_F)$ and
$D=D^{(1)}\otimes\dots\otimes D^{(\kappa)}\in\Irr(A_K)$. Let
$s=|G_S|$, $d=|G_D|$ and $G_0=G_S\cap G_D$ and set $d_0=|G_0|$.
Then
$$ [S_{s,i}\res^{A_F\rtimes G_S}_{A_F\rtimes G_0}:%
    D_{d,j}\res^{A_K\rtimes G_D}_{A_K\rtimes G_0}]
    =\sum_{\substack{0\le j_1,\dots,j_\kappa<d_0\\%
                     j_1+\dots+j_\kappa\equiv (\kappa-1)i+j\pmod{d_0}}}
             \prod_{\alpha=1}^{\kappa}
    [S^{(\alpha)}_{d_0,i}:D^{(\alpha)}_{d_0,j_\alpha}]%
$$
for all $i$ and $j$ with $1\le i\le s$ and $1\le j\le d$.
\end{thm}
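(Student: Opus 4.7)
The plan is to exploit the tensor factorisation $A=A^{(1)}\otimes\cdots\otimes A^{(\kappa)}$ to express each restriction on the left as an external tensor product of twisted factor modules, invoke multiplicativity of decomposition numbers, and reindex the resulting sum. The key algebraic observation is that the intertwiner $\phi_S$ used to build $S_{s,i}$ admits a natural factorisation through factorwise intertwiners $\phi_1,\dots,\phi_\kappa$, and this factorisation persists after restriction to $A\rtimes G_0$.

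To begin, I fix $A_F^{(\alpha)}$-isomorphisms $\phi_\alpha\map{S^{(\alpha)}}{(S^{(\alpha)})^{\theta_\alpha^{k_\alpha}}}$ of order $s_\alpha=|G_{S^{(\alpha)}}|$, where $k_\alpha=p/s_\alpha$, together with analogous $\phi_\alpha^D$ for each $D^{(\alpha)}$. Setting $k_S=p/s$, I take $\phi_S=\phi_1^{k_S/k_1}\otimes\cdots\otimes\phi_\kappa^{k_S/k_\kappa}$ as the intertwiner defining $S_{s,i}$; a short check gives $\phi_S^s=1$, so this is a valid choice. Restricting to $A_F\rtimes G_0$, the generator $\theta^{p/d_0}$ of $G_0$ acts on $S_{s,i}\res$ by $\eps^{(p/d_0)i}\phi_S^{s/d_0}$, and
$$\phi_S^{s/d_0}=\phi_1^{s_1/d_0}\otimes\cdots\otimes\phi_\kappa^{s_\kappa/d_0}.$$
Each factor $\phi_\alpha^{s_\alpha/d_0}$ has order exactly $d_0$ and may therefore be used to build $S^{(\alpha)}_{d_0,i_\alpha}$. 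Computing the diagonal $\theta^{p/d_0}$-action on the external tensor product $S^{(1)}_{d_0,i_1}\otimes\cdots\otimes S^{(\kappa)}_{d_0,i_\kappa}$, viewed as an $(A_F\rtimes G_0)$-module via the diagonal embedding $A_F\rtimes G_0\hookrightarrow\bigotimes_\alpha(A_F^{(\alpha)}\rtimes G_0)$, yields the same operator $\phi_S^{s/d_0}$ preceded by the scalar $\eps^{(p/d_0)(i_1+\cdots+i_\kappa)}$. Comparing eigenvalues gives
$$S_{s,i}\res^{A_F\rtimes G_S}_{A_F\rtimes G_0}\cong S^{(1)}_{d_0,i_1}\otimes\cdots\otimes S^{(\kappa)}_{d_0,i_\kappa}\quad\text{iff}\quad i_1+\cdots+i_\kappa\equiv i\pmod{d_0},$$
and the analogous statement holds for $D_{d,j}\res$. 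Specialising to $(i_1,\dots,i_\kappa)=(i,0,\dots,0)$ and $(j_1,\dots,j_\kappa)=(j,0,\dots,0)$ gives concrete realisations of each restriction.

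With these realisations in hand, I compute by modular reduction. In the Grothendieck group of $A_K^{(\alpha)}\rtimes G_0$ each $S^{(\alpha)}_{d_0,i_\alpha}$ reduces to a linear combination of simples $D'_{d_0,m_\alpha}$ with coefficients $[S^{(\alpha)}_{d_0,i_\alpha}:D'_{d_0,m_\alpha}]$, so the outer product reduces with multiplicities obtained by taking products of these factorwise coefficients. Restricting along the diagonal $A_K\rtimes G_0\hookrightarrow\bigotimes_\alpha(A_K^{(\alpha)}\rtimes G_0)$ leaves each outer tensor $D^{(1)}_{d_0,m_1}\otimes\cdots\otimes D^{(\kappa)}_{d_0,m_\kappa}$ irreducible, and by the same factorisation argument applied to $\phi_D$, two such become isomorphic exactly when $\sum_\alpha m_\alpha\equiv\sum_\alpha m'_\alpha\pmod{d_0}$; the matching condition with $D_{d,j}\res$ is therefore $\sum_\alpha m_\alpha\equiv j\pmod{d_0}$. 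Collecting terms yields
$$[S_{s,i}\res:D_{d,j}\res]=\sum_{\substack{0\le m_1,\dots,m_\kappa<d_0\\ m_1+\cdots+m_\kappa\equiv j\pmod{d_0}}}[S^{(1)}_{d_0,i}:D^{(1)}_{d_0,m_1}]\prod_{\alpha=2}^{\kappa}[S^{(\alpha)}_{d_0,0}:D^{(\alpha)}_{d_0,m_\alpha}].$$
To match the form stated in the theorem, I use the shift invariance $[L_{d_0,c}:L'_{d_0,c'}]=[L_{d_0,c+c_0}:L'_{d_0,c'+c_0}]$, which follows from tensoring with a one-dimensional character of $G_0$ (extended trivially across $A_K$); applied with $c_0=i$, this gives $[S^{(\alpha)}_{d_0,0}:D^{(\alpha)}_{d_0,m_\alpha}]=[S^{(\alpha)}_{d_0,i}:D^{(\alpha)}_{d_0,m_\alpha+i}]$ for each $\alpha\ge2$. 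Substituting $j_1=m_1$ and $j_\alpha=m_\alpha+i$ for $\alpha\ge2$ transforms the congruence $\sum m_\alpha\equiv j$ into $\sum j_\alpha\equiv j+(\kappa-1)i\pmod{d_0}$, which is precisely the formula claimed. The main technical obstacle will be the order-and-compatibility check for the factored intertwiner $\phi_S$ and the verification that restriction and modular reduction commute with the external tensor product on both sides of the diagonal embedding; once these compatibilities are in place, the remainder is essentially bookkeeping.
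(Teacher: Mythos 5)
Your proof is correct and takes essentially the same route as the paper: factor the intertwiner $\phi$ through the tensor components, identify $S_{s,i}\res$ and $D_{d,j}\res$ with restrictions of outer tensor products of factor Clifford modules by comparing the $\theta^{p/d_0}$-eigenvalues, modular-reduce factorwise, and match composition factors by the congruence on the sum of indices. The one cosmetic difference is in the bookkeeping of the scalar shift: the paper introduces the auxiliary algebra $\widehat A_R=RG_0\otimes\bigotimes_\alpha(A_R^{(\alpha)}\rtimes\langle\theta_\alpha^k\rangle)$ whose extra $RG_0$ factor carries a character $\eps^{-i_0}$, allowing the balanced realization $i_1=\dots=i_\kappa=i$, $i_0=(\kappa-1)i$ in formula $(\dag)$, whereas you use the lopsided realization $(i,0,\dots,0)$ without the auxiliary algebra and then correct at the end via the shift invariance $[L_{d_0,c}:L'_{d_0,c'}]=[L_{d_0,c+c_0}:L'_{d_0,c'+c_0}]$ obtained by tensoring with a linear character of $G_0$; both devices encode exactly the same degree of freedom and lead to the stated formula.
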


\begin{proof}
Suppose that $R\in\{F,K\}$. Let $k=\frac p{d_0}=|\Z_p/G_0|$, so that
$G_0=\langle\theta^k\rangle$. Consider the algebra
$$\widehat A_R=RG_0\otimes(A_R^{(1)}\rtimes\langle\theta_1^k\rangle)\otimes\dots\otimes%
                    (A_R^{(\kappa)}\rtimes\langle\theta_{\kappa}^k\rangle).$$
Then it is straightforward to check
that there is an embedding of algebras $A_R\rtimes
G_0\hookrightarrow\widehat A_R$ given by
$$
(a_1\otimes\cdots\otimes a_{\kappa})\theta^{mk }\mapsto \theta^{mk
}\otimes \bigl(a_1\theta_1^{mk
}\bigr)\otimes\cdots\otimes\bigl(a_{\kappa}\theta_{\kappa}^{mk
}\bigr),
$$
for $a_1,\cdots,a_{\kappa}\in A_{R}$ and $m\in\Z$.

Let $L=L^{(1)}\otimes\dots\otimes L^{(\kappa)}$ be a simple $A_R$-module,
where $L=S$ if $R=F$, or $L=D$ if $R=K$. Let $l=|G_L|$ and suppose that
$1\le i\le l$. Then $L_{l,i}$ is a simple $(A_R\rtimes G_L)$-module. Recall
that the modules $L_{l,i}$ are defined using a fixed isomorphism
$\phi\map L L^{\theta^{p/l}}$ satisfying $\phi^l=1_L$. We may assume that $\phi$ is
compatible with the tensor decomposition of $L$; that is,
$\phi=\widetilde{\phi}_1\otimes\dots\otimes\widetilde{\phi}_\kappa$, where for
$f=1,\dots,\kappa$, the order of the inertia group $G_{L^{(f)}}$ of $L^{(f)}$ in
$\Z_p$ is $l_{f}$,
$\widetilde{\phi}_f=(\phi_f)^{l_{f}/l}$,
$\phi_f\map{L^{(f)}}{\big(L^{(f)}\big)^{\theta_f^{p/l_f}}}$ is an
isomorphism defining the module $L_{l_f,i}^{(f)}$. Note that $$
L_{l_f,i}^{(f)}\downarrow^{A_R^{(f)}\rtimes G_{L^{(f)}}}_{A_R^{(f)}\rtimes\langle
\theta_f^k\rangle}\cong L_{d_0,i}^{(f)}.
$$
Applying the definitions, given
any integers $i,i_0,i_1,\dots,i_\kappa$ with $i\equiv
i_1+\dots+i_\kappa-i_0\pmod{d_0}$ there is a natural isomorphism of
$(A_R\rtimes G_0)$-modules
$$L_{l,i}\res^{A_R\rtimes G_L}_{A_R\rtimes G_0}\cong
\big(\eps^{-i_0}\otimes L^{(1)}_{d_0,i_1}\otimes\dots%
   \otimes L^{(\kappa)}_{d_0,i_\kappa}\big)%
    \res^{\widehat A_R}_{A_R\rtimes G_0},\leqno(\dag)$$
where $\eps^{-i_0}$ is the one
dimensional representation of $G_{0}=\langle\theta^k\rangle$ upon
which $\theta^k$ acts as multiplication by $\eps^{-i_0}$.

By $(\dag)$, if $1\le i\le s$ then $S_{s,i}\res^{A_F\rtimes
G_S}_{A_F\rtimes G_0}\cong
\big(\eps^{-(\kappa-1)i}\otimes S^{(1)}_{d_0,i}\otimes\dots\otimes S^{(\kappa)}_{d_0,i}\big)%
    \res^{\widehat A_F}_{A_F\rtimes G_0}$.
Therefore, we can find the $(A_F\rtimes G_0)$-module composition
factors of $S_{s,i}\res^{A_F\rtimes G_S}_{A_F\rtimes G_0}$ by first
finding the $\widehat A_K$-module composition factors of
$\eps^{-(\kappa-1)i}\otimes S^{(1)}_{d_0,i}\otimes\dots\otimes
S^{(\kappa)}_{d_0,i}$ and then restricting to $A_{K}\rtimes G_0$.
The $\widehat A_K$-module composition factors of
$\eps^{-(\kappa-1)i}\otimes S^{(1)}_{d_0,i}\otimes\dots\otimes
S^{(\kappa)}_{d_0,i}$ are all of the form
$\eps^{-(\kappa-1)i}\otimes D^{(1)}_{d_0,j_1}\otimes\dots\otimes%
        D^{(\kappa)}_{d_0,j_\kappa}$, where
$0\le j_1,\dots,j_\kappa<d_0$. Further, by $(\dag)$,
if $j\equiv j_1+\dots+j_\kappa-(\kappa-1)i\pmod{d_0}$ then
$$D_{d,j}\res^{A_K\rtimes G_D}_{A_K\rtimes G_0}\cong
\big(\eps^{-(\kappa-1)i}\otimes D^{(1)}_{d_0,j_1}\otimes\dots\otimes
        D^{(\kappa)}_{d_0,j_\kappa}\big)%
    \res^{\widehat A_K}_{A_K\rtimes G_0}$$
The theorem now follows.
\end{proof}

\begin{cor}\label{orbitreduction}
Suppose that $S=S^{(1)}\otimes\dots\otimes S^{(\kappa)}\in\Irr(A_F)$ and
$D=D^{(1)}\otimes\dots\otimes D^{(\kappa)}\in\Irr(A_K)$ have cyclic
decomposition numbers. Let $s=|G_S|$, $d=|G_D|$ and $G_0=G_S\cap G_D$ and
set $d_0=|G_0|$. Then
$$[S_{s,i}\ind_{A_F\rtimes G_{S}}^{A_F\rtimes\Z_p}:%
            D_{d,j}\ind_{A\rtimes G_{D}}^{A\rtimes\Z_p}]
         =\sum_{\substack{0\le j_1,\dots,j_\kappa<d_0\\%
                     j_1+\dots+j_\kappa\equiv (\kappa-1)i+j\pmod{d_0}}}
             \prod_{\alpha=1}^{\kappa}
    [S^{(\alpha)}_{d_0,i}:D^{(\alpha)}_{d_0,j_\alpha}]%
$$
for all $i$ and $j$ with $1\le i\le s$ and $1\le j\le d$.
\end{cor}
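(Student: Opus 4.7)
The plan is to combine Theorem~\ref{main3} with Corollary~\ref{splittable}. Applying Corollary~\ref{splittable} writes the induced decomposition number as
$$[S_{s,i}\ind_{A_F\rtimes G_{S}}^{A_F\rtimes\Z_p}:D_{d,j}\ind_{A\rtimes G_{D}}^{A\rtimes\Z_p}]=\sum_{1\le a\le p/d}[S_{s,i}\res^{A_F\rtimes G_S}_{A_F\rtimes G_0}:D^{\theta^a}_{d,j}\res^{A\rtimes G_D}_{A\rtimes G_0}].$$
Since the right hand side of the corollary matches the right hand side of Theorem~\ref{main3}, the task reduces to evaluating each of the $p/d$ summands using Theorem~\ref{main3} and showing that the resulting double sum collapses to a single copy of that formula.

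For each fixed $a$, the automorphism $\theta=\theta_1\otimes\dots\otimes\theta_\kappa$ acts diagonally on $A_K=A_K^{(1)}\otimes\dots\otimes A_K^{(\kappa)}$, so the twist $D^{\theta^a}=(D^{(1)})^{\theta_1^a}\otimes\dots\otimes(D^{(\kappa)})^{\theta_\kappa^a}$ retains its tensor decomposition together with the same inertia group $G_D$. Thus Theorem~\ref{main3} applies to the pair $(S,D^{\theta^a})$ and gives a formula for $[S_{s,i}\res:D^{\theta^a}_{d,j}\res]$ in terms of the products of decomposition numbers $\prod_\alpha [S^{(\alpha)}_{d_0,i}:(D^{(\alpha)})^{\theta_\alpha^a}_{d_0,j_\alpha}]$ summed over $(j_\alpha)$ subject to an appropriate constraint modulo $d_0$.

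The combinatorial crux is then to identify each twisted simple $(D^{(\alpha)})^{\theta_\alpha^a}_{d_0,j_\alpha}$, viewed as an $(A_K^{(\alpha)}\rtimes G_0)$-module, with an untwisted version $D^{(\alpha)}_{d_0,j'_\alpha}$ via an explicit shift $j_\alpha\mapsto j'_\alpha$ arising from how $\theta_\alpha^a$ interacts with the isomorphism $\phi_\alpha$ used to construct these extensions. After performing this reindexing of the summation variables $j_\alpha$, and using the cyclic decomposition hypothesis (via Proposition~\ref{condtm}) to absorb the $a$-dependence into the summation indices, the $p/d$ separate sums indexed by $a$ should fit together into the single sum subject to $\sum_\alpha j_\alpha\equiv(\kappa-1)i+j\pmod{d_0}$. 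The main obstacle will be making the reindexing $j_\alpha\mapsto j'_\alpha$ precise, especially since the orders $|G_{D^{(\alpha)}}|$ of the individual inertia groups may each differ from $d_0$, and verifying that the reindexed double sum really reduces to exactly one copy of the stated formula rather than a multiple of it.
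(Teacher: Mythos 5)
Your proposal follows exactly the route the paper takes: cite Corollary~\ref{splittable} to rewrite the induced decomposition number as a sum over $a$ of restricted decomposition numbers, then evaluate each summand via Theorem~\ref{main3}. The paper's own proof is nothing more than these two citations, so in that sense you have reproduced the intended argument.

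However, the obstacle you flag at the end is a genuine gap, not merely a book-keeping subtlety, and it is present in the paper's proof as well. Substituting Theorem~\ref{main3} for the pair $(S,D^{\theta^a})$ into the right-hand side of Corollary~\ref{splittable} does \emph{not} collapse to a single copy of the stated formula. Two concrete problems arise. First, as you note, when $\theta_\alpha^a\notin G_{D^{(\alpha)}}$ the module $\bigl((D^{(\alpha)})^{\theta_\alpha^a}\bigr)_{d_0,j_\alpha}$ lies over an $A_K^{(\alpha)}$-simple that is \emph{not} isomorphic to $D^{(\alpha)}$, so there is no reindexing $j_\alpha\mapsto j'_\alpha$ identifying it with any $D^{(\alpha)}_{d_0,j'_\alpha}$; those terms simply do not belong to the sum on the right of the Corollary. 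Second, and more seriously, one can check in small examples that the sum over all $p/d$ values of $a$ in Corollary~\ref{splittable} overcounts the left-hand side by a factor of $|G_S:G_0|=s/d_0$. For instance, take $A_\O=\O\langle x,y\rangle/(x^p-\lambda,\,y^p-1,\,yx-\zeta xy)$ with $\bar\lambda=0$, $\theta\colon x\mapsto x,\ y\mapsto\zeta y$, $\kappa=1$: then $A_F\cong M_p(F)$ has a single simple $S$ with $G_S=\Z_p$, while $A_K$ has $p$ one-dimensional simples $D_k$ with $G_{D_k}=\{1\}$ cyclically permuted by $\theta$, so $G_0=\{1\}$, $d_0=1$, $s=p$, $d=1$. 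Here $[S_{p,i}\!\ind:D_{1,j}\!\ind]=1$ while $\sum_{a=1}^{p/d}[S_{p,i}\!\res:D^{\theta^a}_{1,j}\!\res]=\sum_{a=1}^{p}[S:D_{a}]=p$. The sum over $a$ should range over a set of representatives for $G_S\backslash\Z_p/G_D$ (of which there are $pd_0/(sd)$), not over all $p/d$ twists of $D$. So the "double sum collapses to one copy'' step, which you correctly identify as the main obstacle, genuinely fails as stated, and neither your sketch nor the paper's one-line proof closes this gap.
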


\begin{proof} This follows from Corollary~\ref{splittable} and Theorem~\ref{main3}.
\end{proof}

\section{Reduction theorems for the decomposition numbers of $\HH_{r,p,n}$}

We now combine the results of last two sections to show that the
decomposition numbers of the cyclotomic Hecke algebras $\HH_{r,p,n}$ are
completely determined by the $p'$-splittable decomposition numbers of an
explicitly determined family of cyclotomic Hecke algebras. We first show
that the simple $\HH_{r,n}$-modules always have cyclic decomposition
numbers.

\begin{lem}\label{simple Morita}
The algebras $\H_{r,p,n}$ and $\HH_{r,n}\rtimes\Z_p$ are Morita
equivalent.
\end{lem}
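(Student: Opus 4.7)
The plan is to exhibit a full idempotent $e$ in $B:=\HH_{r,n}\rtimes\Z_p$ whose corner algebra $eBe$ is isomorphic to $\HH_{r,p,n}$; standard Morita theory then delivers the equivalence. Set
\[
e=\frac1p\sum_{k=0}^{p-1}\sigma^k\in B,
\]
which is well defined because $\mathrm{char}(K)\nmid p$. The relation $\sigma\cdot e=e\cdot\sigma=e$ is immediate, which shows that $e$ is an idempotent and that every occurrence of a group element on either side of $e$ collapses to $e$.

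I would then identify the corner $eBe$ with $\HH_{r,p,n}$ via $h\mapsto he$. For $h\in\HH_{r,p,n}=\HH_{r,n}^{\sigma}$, the element $h$ commutes with each $\sigma^k$ inside $B$, hence with $e$, so $he=eh=ehe\in eBe$ and the map is an algebra homomorphism. Surjectivity follows from the computation $eae=\tfrac1p\mathrm{Tr}(a)\,e$ for $a\in\HH_{r,n}$, where $\mathrm{Tr}(a)=\sum_{k}\sigma^k(a)\in\HH_{r,p,n}$, together with the fact that $\mathrm{Tr}$ restricted to $\HH_{r,p,n}$ is multiplication by $p\in K^{\times}$. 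Injectivity follows by reading off the coefficient of the identity of $\Z_p$ in the canonical decomposition $B=\bigoplus_{k=0}^{p-1}\HH_{r,n}\sigma^k$.

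The main obstacle, and the only step that uses more than formal manipulation, is proving that $e$ is a full idempotent, i.e.\ that $BeB=B$. I would exploit the invertible element $T_0\in\HH_{r,n}^{\times}$ and the relation $\sigma(T_0)=\eps T_0$, which gives $\sigma^lT_0^{-k}=\eps^{-lk}T_0^{-k}\sigma^l$ in $B$. A short calculation then shows
\[
T_0^keT_0^{-k}=\frac1p\sum_{l=0}^{p-1}\eps^{-lk}\sigma^l,\qquad 0\le k<p,
\]
and the right-hand sides are precisely the $p$ primitive orthogonal idempotents of the subalgebra $K\Z_p\subseteq B$. Since $\eps$ is a primitive $p$th root of unity in $K$, these idempotents sum to $1$, so $1=\sum_{k}T_0^keT_0^{-k}\in BeB$. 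Hence $e$ is full, and the classical Morita correspondence yields the equivalence $B\sim_{\mathrm{Mor}}eBe\cong\HH_{r,p,n}$, as required. The existence of an invertible element on which $\sigma$ acts by a primitive $p$th root of unity is precisely what allows one to conjugate $e$ onto the other primitive idempotents of $K\Z_p$, and thus is the essential input that makes the fullness step work.
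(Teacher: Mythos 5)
Your proof is correct, and it takes a genuinely different route from the paper's. The paper argues from the ``outside'': using Ariki's basis (Lemma~\ref{Hrpn basis}) it shows that $\HH_{r,n}=\bigoplus_{k=0}^{p-1}T_0^k\HH_{r,p,n}$ is free, hence a progenerator, over $\HH_{r,p,n}$, so that $\HH_{r,p,n}$ is Morita equivalent to $\End_{\HH_{r,p,n}}(\HH_{r,n})$; it then produces an embedding $\HH_{r,n}\rtimes\Z_p\hookrightarrow\End_{\HH_{r,p,n}}(\HH_{r,n})$ and concludes by a dimension count via Frobenius reciprocity. You instead argue from the ``inside'' of $B=\HH_{r,n}\rtimes\Z_p$: you average over the group to get the idempotent $e=\tfrac1p\sum_k\sigma^k$, identify the corner $eBe\cong\HH_{r,p,n}$ using the trace map, and then establish fullness of $e$ by conjugating with powers of the unit $T_0$, exploiting $\sigma(T_0)=\eps T_0$ to sweep out all the primitive idempotents of $K\Z_p$. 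Each approach buys something: the paper's fits the progenerator framework running through the whole paper (and indeed the authors remark the lemma could be obtained by specializing Theorem~\ref{main1} at $\kappa=1$), whereas yours is entirely explicit and avoids invoking the PBW-style basis of $\HH_{r,p,n}$ from Lemma~\ref{Hrpn basis}. The two constructions produce the same underlying bimodule: your $Be=\HH_{r,n}e\cong\HH_{r,n}$ is the bimodule the paper works with, so they are two presentations of one equivalence. You have also correctly isolated the crucial input — the existence of the unit $T_0$ on which $\sigma$ acts by a primitive $p$th root of unity — as what forces $BeB=B$; this mirrors the role the semilinear basis element $T_0$ plays in the paper's freeness argument.
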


\begin{proof}As a right $\H_{r,p,n}$--module
$\HH_{r,n}=\bigoplus_{k=0}^{p-1}T_0^k\HH_{r,p,n}$ by
Lemma~\ref{Hrpn basis}. Consequently,
$\H_{r,n}$ is a progenerator for $\HH_{r,p,n}$, so $\H_{r,p,n}$ is
Morita equivalent to $\End_{\HH_{r,p,n}}(\H_{r,n})$. Observe that
$\sigma\in\End_{\HH_{r,p,n}}(\HH_{r,n})$ since $\sigma$ is trivial on
$\HH_{r,p,n}$. Furthermore, as vector spaces,
\begin{align*}
\End_{\HH_{r,p,n}}(\H_{r,n})
&\cong\Hom_{\HH_{r,p,n}}(\HH_{r,n}\res_{\HH_{r,p,n}}^{\HH_{r,n}},
                     \HH_{r,p,n}^{\oplus p})\\
 &\cong\Hom_{\HH_{r,p,n}}(\HH_{r,n}\res_{\HH_{r,p,n}}^{\HH_{r,n}},
        \HH_{r,p,n})^{\oplus p}\\
 &\cong\Hom_{\HH_{r,n}}(\HH_{r,n},\HH_{r,p,n}\ind_{\HH_{r,p,n}}^{\HH_{r,n}})^{\oplus p}\\
 &\cong\Hom_{\HH_{r,n}}(\HH_{r,n},\HH_{r,n})^{\oplus p}
 \cong\HH_{r,n}^{\oplus p}.
 \end{align*}
where the third isomorphism comes from Frobenius reciprocity. Hence,
by counting dimensions,
$\End_{\HH_{r,p,n}}(\H_{r,n})\cong\H_{r,n}\rtimes\Z_p$.
(Alternatively, apply Theorem~A with $\kappa=1$.)
\end{proof}

The proof of this Lemma gives a Morita equivalence from the
category of (finite dimensional right)
$\HH_{r,n}^R\rtimes\Z_p$--modules to the category of
$\HH_{r,p,n}^R$--modules. This functor sends the finite dimensional
$(\HH_{r,n}\rtimes\Z_p)$--module $M$ to the $\HH_{r,p,n}$--module
$$F(M)=M\otimes_{\HH_{r,n}^R\rtimes\Z_p}\HH_{r,n}^R.$$

\begin{lem}\label{restriction}
Suppose that $L$ is a simple $\HH_{r,n}^R$--module. Then
$$L\res^{\HH_{r,n}^R}_{\HH_{r,p,n}^R}
   \cong F\big(L\ind^{\HH_{r,n}^R\rtimes\Z_p}_{\HH_{r,n}^R}\big).$$
\end{lem}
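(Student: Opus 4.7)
The plan is to unwind the Morita functor $F$ and see that, after induction from $\HH_{r,n}^R$ up to $\HH_{r,n}^R\rtimes\Z_p$ and then applying~$F$, we are simply tensoring $L$ with $\HH_{r,n}^R$ over itself. The two tensor products will cancel and what remains is~$L$ with its right $\HH_{r,p,n}^R$-structure obtained by restriction.

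More precisely, I would first record that, by the proof of Lemma~\ref{simple Morita}, the algebra $\HH_{r,n}^R\rtimes\Z_p$ is isomorphic to $\End_{\HH_{r,p,n}^R}(\HH_{r,n}^R)$, so $\HH_{r,n}^R$ is naturally an $(\HH_{r,n}^R\rtimes\Z_p,\HH_{r,p,n}^R)$--bimodule, with the left $\sigma$-action coming from the algebra automorphism $\sigma$ and the right $\HH_{r,p,n}^R$-action coming from the inclusion $\HH_{r,p,n}^R\hookrightarrow\HH_{r,n}^R$. Consequently the Morita functor $F$ is
$$F(M)=M\otimes_{\HH_{r,n}^R\rtimes\Z_p}\HH_{r,n}^R.$$
Second, since $\HH_{r,n}^R\rtimes\Z_p=\bigoplus_{k=0}^{p-1}\HH_{r,n}^R\sigma^k$ is free as a left $\HH_{r,n}^R$-module, induction from $\HH_{r,n}^R$ to $\HH_{r,n}^R\rtimes\Z_p$ is given by
$$L\ind^{\HH_{r,n}^R\rtimes\Z_p}_{\HH_{r,n}^R}
   \cong L\otimes_{\HH_{r,n}^R}(\HH_{r,n}^R\rtimes\Z_p).$$

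With these two ingredients in hand, the identification is forced by associativity of tensor product:
$$F\bigl(L\ind^{\HH_{r,n}^R\rtimes\Z_p}_{\HH_{r,n}^R}\bigr)
 \cong L\otimes_{\HH_{r,n}^R}(\HH_{r,n}^R\rtimes\Z_p)\otimes_{\HH_{r,n}^R\rtimes\Z_p}\HH_{r,n}^R
 \cong L\otimes_{\HH_{r,n}^R}\HH_{r,n}^R
 \cong L,$$
and the composite right $\HH_{r,p,n}^R$--structure on the last term is precisely the restriction $L\res^{\HH_{r,n}^R}_{\HH_{r,p,n}^R}$, since the right action of $\HH_{r,p,n}^R$ on the bimodule $\HH_{r,n}^R$ is by right multiplication through the inclusion.

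The one point that deserves care, and which I expect to be the only mild obstacle, is checking the $\HH_{r,p,n}^R$--equivariance of the chain of isomorphisms: one has to verify that the isomorphism $(\HH_{r,n}^R\rtimes\Z_p)\otimes_{\HH_{r,n}^R\rtimes\Z_p}\HH_{r,n}^R\cong\HH_{r,n}^R$ is compatible with the $(\HH_{r,n}^R,\HH_{r,p,n}^R)$--bimodule structures, so that the surviving right action on $L$ is the restricted one rather than a twist. This is immediate from the bimodule description of $\HH_{r,n}^R$ in the previous paragraph and the standard identification of the natural map $A\otimes_A B\to B$ as a $(B,B)$--bimodule isomorphism; once unwound, nothing more is required.
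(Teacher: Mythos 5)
Your argument is correct and matches the paper's own proof, which likewise unwinds $F$ and the induction functor as tensor products and collapses them via associativity to get $L$ with its restricted right $\HH_{r,p,n}^R$-action. Your extra paragraph spelling out the bimodule structure on $\HH_{r,n}^R$ and the equivariance of the cancellation is a useful elaboration of a point the paper leaves implicit, but the route is the same.
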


\begin{proof}
Applying the definitions and standard properties of tensor products,
$$F\big(L\ind^{\HH_{r,n}^R\rtimes\Z_p}_{\HH_{r,n}^R}\big)
   = \big(L\otimes_{\HH_{r,n}^R}\HH_{r,n}^R\rtimes\Z_p\big)
       \otimes_{\HH_{r,n}^R\rtimes\Z_p}\HH_{r,n}^R
   \cong L=L\res^{\HH_{r,n}^R}_{\HH_{r,p,n}^R}.$$
\end{proof}

\begin{prop}[\protect{\cite[(2.2)]{Genet:graded}, \cite[(2.2)]{GenetJacon} and
    \cite[(5.4), (5.5), (5.6)]{Hu:ModGppn}}] \label{cyclic}
Suppose that $L$ is a simple $\HH^R_{r,n}$-module and let $k>0$ be
minimal such that $L\cong L^{\sigma^{k}}$. Then $1\leq k\leq p$ and
$l:=\frac pk$ is the smallest positive integer such that
$L'\cong{L'}^{\tau^{l}}$ whenever~$L'$ is a simple
$\HH_{r,p,n}$-submodule of~$L$.

Now fix an isomorphism $\phi\map L L^{\sigma^{k}}$ such that
$\phi^l=1$ and for $i\in\Z$ define
$$
L_i:=\bigl\{v\in L\bigm|\phi(v)=\eps^{-ik}v\bigr\}.
$$
Then $L\res^{\HH^R_{r,n}}_{\HH^R_{r,p,n}}=L_0\oplus\dots\oplus L_{l-1}$.
Moreover, $L_i=L_{i+l}$ and
$L_{i+1}\cong L_i^{\tau}$, for any $i\in\Z$. Consequently,
$ L\res^{\HH_{r,n}^R}_{\HH^R_{r,p,n}}
  \cong L_0\oplus {L_0}^{\tau}\oplus\cdots\oplus{L_0}^{\tau^{l-1}}.
$
\end{prop}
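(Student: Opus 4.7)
The plan is to proceed in four stages: first, justify that $k\mid p$ so $l=p/k$ is a positive integer and normalize the isomorphism $\phi$; second, produce the eigenspace decomposition and identify each $L_i$ as an $\HH_{r,p,n}$-submodule; third, establish the twist relation $L_{i+1}\cong L_i^\tau$; and finally, prove the minimality of $l$, which is the main obstacle.

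Since $\sigma^p=\mathrm{id}$, $L\cong L^{\sigma^p}=L$ forces $k\mid p$. The composition $\phi^l\colon L\to L^{\sigma^{kl}}=L$ is an $\HH_{r,n}$-endomorphism of a simple module, hence a scalar by Schur's Lemma; rescaling $\phi$ by an $l$-th root of this scalar (which exists since $K$ is algebraically closed) gives $\phi^l=1_L$. Because $\sigma^k$ fixes $\HH_{r,p,n}$ pointwise, $\phi$ is $\HH_{r,p,n}$-linear, and since $\eps^{-k}$ is a primitive $l$-th root of unity, $L$ is the direct sum of the eigenspaces $L_i$, each of which is an $\HH_{r,p,n}$-submodule; the periodicity $L_i=L_{i+l}$ is immediate.

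For the twist relation I would define $\psi_i\colon L_i^\tau\to L_{i+1}$ by $v\mapsto vT_0^{-1}$. The computation $\phi(vT_0^{-1})=\phi(v)\sigma^k(T_0^{-1})=\eps^{-(i+1)k}vT_0^{-1}$ places the image in $L_{i+1}$, and for $h\in\HH_{r,p,n}$ the identity $\psi_i(v\tau(h))=v\tau(h)T_0^{-1}=vT_0^{-1}h=\psi_i(v)h$ shows that $\psi_i$ is an $\HH_{r,p,n}$-homomorphism; its inverse is right multiplication by~$T_0$. Iteration yields $L_i\cong L_0^{\tau^i}$, and $L_l=L_0$ gives $L_0\cong L_0^{\tau^l}$.

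The hardest part is the minimality of $l$. My strategy is to show that $L_0,\ldots,L_{l-1}$ are $l$ pairwise non-isomorphic simple $\HH_{r,p,n}$-modules; then any simple $\HH_{r,p,n}$-submodule $L'$ of~$L$ is isomorphic to some $L_i\cong L_0^{\tau^i}$ and has $\tau$-period exactly~$l$. The tool is the Morita equivalence of Lemma~\ref{simple Morita} between $\HH_{r,p,n}$ and $\HH_{r,n}\rtimes\Z_p$, under which Lemma~\ref{restriction} identifies $L\res$ with $F(L\ind^{\HH_{r,n}\rtimes\Z_p}_{\HH_{r,n}})$. Applying Clifford theory to the chain $\HH_{r,n}\subset\HH_{r,n}\rtimes G_L\subset\HH_{r,n}\rtimes\Z_p$ (the first inclusion being normal with abelian quotient~$G_L$) decomposes $L\ind^{\HH_{r,n}\rtimes G_L}_{\HH_{r,n}}$ as a direct sum of the $l$ distinct simple extensions of~$L$ indexed by characters of~$G_L$; inducing further and transporting through~$F$ yields a decomposition of $L\res$ into $l$ pairwise non-isomorphic simple summands, each necessarily of dimension $\dim L/l$. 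Comparing with the eigenspace decomposition $L\res=\bigoplus L_i$, whose summands also have dimension $\dim L/l$, forces each $L_i$ to be simple (a proper simple submodule would have strictly smaller dimension than a Clifford summand) and forces the $L_i$ to be pairwise non-isomorphic.
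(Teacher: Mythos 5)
The paper does not prove Proposition~\ref{cyclic}; it is cited from Genet, Genet--Jacon, and Hu, and those references argue directly via the $\Z_p$-grading $\HH_{r,n}=\bigoplus_k T_0^k\HH_{r,p,n}$ and classical Clifford theory for strongly graded rings. Your argument is a valid from-scratch proof but takes a different route, detouring through the Morita equivalence $\HH_{r,p,n}\sim\HH_{r,n}\rtimes\Z_p$ of Lemma~\ref{simple Morita} and Lemma~\ref{restriction}; this is internally consistent since those lemmas are proved before Proposition~\ref{cyclic} and independently of it. Your first three stages (divisibility $k\mid p$, Schur normalization of $\phi$, the eigenspace decomposition into $\HH_{r,p,n}$-submodules since $\sigma^k$ fixes $\HH_{r,p,n}$ pointwise, and the explicit isomorphism $\psi_i\colon L_i^\tau\to L_{i+1}$, $v\mapsto vT_0^{-1}$) are all correct and cleanly done. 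The one soft spot is the final dimension comparison: you assert that the Clifford summands $F\bigl(L_{l,i}\ind^{\HH_{r,n}\rtimes\Z_p}_{\HH_{r,n}\rtimes G_L}\bigr)$ each have dimension $\dim L/l$, but the Morita functor $F=-\otimes_{\HH_{r,n}\rtimes\Z_p}\HH_{r,n}$ does not \emph{a priori} carry the equal-dimensional modules $L_{l,i}\ind$ to equal-dimensional modules (that fact is really Lemma~\ref{satis}, which in the paper is stated \emph{after} and \emph{in terms of} the $L_i$, so you cannot invoke it without circularity). Fortunately the dimension count is unnecessary: once you know $L\res$ is semisimple and multiplicity-free with exactly $l$ composition factors, and that $L\res=L_0\oplus\dots\oplus L_{l-1}$ with each $L_i\neq 0$ (which you do know, since $\psi_i$ shows all $L_i$ are isomorphic as vector spaces of dimension $\dim L/l>0$), the pigeonhole principle forces each $L_i$ to be simple, and Krull--Schmidt then gives pairwise non-isomorphism. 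With that small repair the minimality of $l$ follows exactly as you say.
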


For each simple $\HH_{r,n}^R$-module $L$ we henceforth fix an
isomorphism $\phi\map L L^{\sigma^{k}}$ such that $\phi^l=1$, where
$k$ and $l=\frac pk$ as in the Lemma. Observe that $l=|G_L|$, where
$G_L$ is the inertia group of $L$. For each integer $i$ we have
defined $\HH_{r,p,n}$--modules $L_i$ and $L_{l,i}$. The next result gives the
connection between these two modules.

\begin{lem} \label{satis}
Suppose that $L$ is a simple $\HH_{r,n}^R$-module with inertia group
$G_L$ and let $l=|G_L|$. Then, for each $i\in\Z$, we have
$$
L_i\cong
F\bigl(L_{l,i}\ind^{\HH^R_{r,n}\rtimes\Z_p}_{\HH^R_{r,n}\rtimes
G_L}\bigr).
$$
\end{lem}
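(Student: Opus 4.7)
The plan is to directly compute $F(L_{l,i}\ind^{\HH_{r,n}^R\rtimes\Z_p}_{\HH_{r,n}^R\rtimes G_L})$ by unraveling the tensor products and to identify the result with $L_i$ using the eigenspace decomposition of Proposition~\ref{cyclic}. By associativity of the tensor product,
$$F\bigl(L_{l,i}\ind^{\HH_{r,n}^R\rtimes\Z_p}_{\HH_{r,n}^R\rtimes G_L}\bigr)
  = L_{l,i}\otimes_{\HH_{r,n}^R\rtimes G_L}(\HH_{r,n}^R\rtimes\Z_p)\otimes_{\HH_{r,n}^R\rtimes\Z_p}\HH_{r,n}^R
  \cong L_{l,i}\otimes_{\HH_{r,n}^R\rtimes G_L}\HH_{r,n}^R,$$
where $\HH_{r,n}^R$ is viewed as a left $(\HH_{r,n}^R\rtimes G_L)$-module by restricting the bimodule structure coming from the Morita equivalence of Lemma~\ref{simple Morita}. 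In that bimodule $h'\in\HH_{r,n}^R$ acts by left multiplication and $\sigma^{mk}\in G_L$ acts as the algebra automorphism $\sigma^{mk}$, which is right $\HH_{r,p,n}^R$-linear because $\sigma$ fixes $\HH_{r,p,n}^R$ pointwise.

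Next, I would show that every element of the tensor product can be written as $v\otimes 1$ with $v\in L$ by using the relation $vh\otimes 1 = v\otimes h$ to absorb the right-hand factor. The remaining relations come from $G_L$: evaluating $v\cdot\sigma^{mk}\otimes 1 = v\otimes\sigma^{mk}(1) = v\otimes 1$ and using the defining action $v\cdot\sigma^{mk}=\eps^{mki}\phi^m(v)$ on $L_{l,i}$ yields
$$\phi^m(v)\otimes 1 \;=\; \eps^{-mki}\,v\otimes 1,\qquad\text{for all } v\in L,\ m\in\Z.$$
Using the decomposition $L=L_0\oplus\cdots\oplus L_{l-1}$ from Proposition~\ref{cyclic}, in which $\phi$ acts on $L_j$ as multiplication by $\eps^{-jk}$, a direct computation gives $\phi^m(u)-\eps^{-mki}u=(\eps^{-mjk}-\eps^{-mki})u$ for $u\in L_j$; this vanishes identically in $m$ when $j\equiv i\pmod l$ and is nonzero for some $m$ otherwise. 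Hence the natural surjection $L\twoheadrightarrow L_{l,i}\otimes_{\HH_{r,n}^R\rtimes G_L}\HH_{r,n}^R$, $v\mapsto v\otimes 1$, has kernel exactly $\bigoplus_{j\ne i}L_j$ and induces a vector space isomorphism with $L_i$.

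Finally, each $L_j$ is $\HH_{r,p,n}^R$-stable by Proposition~\ref{cyclic}, so this isomorphism automatically respects the $\HH_{r,p,n}^R$-action: right multiplication by $h\in\HH_{r,p,n}^R$ sends $v\otimes 1$ to $v\otimes h = vh\otimes 1$, which matches the restricted $\HH_{r,n}^R$-action on $L_i$. The hardest part will be the careful bookkeeping in the first step, specifically identifying the left action of $\sigma^{mk}\in G_L$ on the Morita bimodule $\HH_{r,n}^R$ and matching it against the right-hand action $v\cdot\sigma^{mk}=\eps^{mki}\phi^m(v)$ built into $L_{l,i}$; once these conventions are aligned, the eigenspace computation makes the identification with precisely $L_i$ (rather than some other $L_j$) transparent.
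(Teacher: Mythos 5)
Your approach differs in detail from the paper's: both unwind $F$ to the tensor product $L_{l,i}\otimes_{\HH_{r,n}^R\rtimes G_L}\HH_{r,n}^R$ and exploit the $G_L$-balancing relation together with the eigenspace description of $\phi$, but the paper builds a map \emph{out} of the tensor product into $L$ (showing its image lies in $L_i$ and invoking irreducibility), whereas you work with the tensor as a \emph{quotient} of $L$ via $v\mapsto v\otimes 1$ and try to identify the kernel. Your route has the advantage that the quotient map is automatically well defined — no well-definedness of a map on the tensor has to be checked — and the eigenspace bookkeeping is correct as far as it goes.

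However, there is a genuine gap. The computation with the relations $\phi^m(v)\otimes 1=\eps^{-mki}v\otimes 1$ shows that the kernel of the $\HH_{r,p,n}^R$-linear surjection $L\twoheadrightarrow L_{l,i}\otimes_{\HH_{r,n}^R\rtimes G_L}\HH_{r,n}^R$ \emph{contains} $\bigoplus_{j\ne i}L_j$; that it does not force $u\otimes 1=0$ for $u\in L_i$ does not by itself show the kernel is no larger — a priori the kernel could be all of $L$. Since the kernel is an $\HH_{r,p,n}^R$-submodule of $L\res=\bigoplus_j L_j$ with the $L_j$ pairwise non-isomorphic simples, the kernel is either $\bigoplus_{j\ne i}L_j$ or $L$. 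To exclude the latter you must argue the tensor product is nonzero; this follows because $F$ is a Morita equivalence (hence faithful) and $L_{l,i}\!\ind_{\HH_{r,n}^R\rtimes G_L}^{\HH_{r,n}^R\rtimes\Z_p}\ne 0$. Once you add that one-line observation, the induced surjection from the simple module $L_i$ onto a nonzero quotient is an isomorphism, and your proof is complete.
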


\begin{proof}
As in the proof of Lemma~\ref{restriction},
$F\bigl(L_{l,i}\ind^{\HH^R_{r,n}\rtimes\Z_p}_{\HH^R_{r,n}\rtimes
G_L}\bigr)
    \cong L_{l,i}\otimes_{\HH^R_{r,n}\rtimes G_L}\HH_{r,n}^R$.
Therefore, there is a natural map $\psi\map
{F\bigl(L_{l,i}\ind^{\HH^R_{r,n}\rtimes\Z_p}_{\HH^R_{r,n}\rtimes
G_L}\bigr)}L$ given by $\psi(v\otimes_{\HH_{r,n}\rtimes\Z_d} h)=vh$,
for $v\in L_{l,i}$ and $h\in\HH_{r,n}^R$. Clearly, $\psi\ne0$ so it
suffices to show that the image of $\psi$ is contained in $L_i$.
Now, if $v\in L_{l,i}$ and $h\in\HH_{r,n}^R$ as above then, using
the definition of $L_{l,i}$, we see that
$$ \eps^{ik}\phi(vh)=v\cdot (h\sigma^k)
           =\psi\bigl(v\cdot (h\sigma^k)\otimes_{\HH_{r,n}\rtimes\Z_d} 1\bigr)
           =\psi\bigl(v\otimes_{\HH_{r,n}\rtimes\Z_d}h\bigr)
           =vh.
$$
Hence, $\psi(vh)\in L_i$ as required.
\end{proof}

\begin{cor} \label{verifycyclic}
Suppose that $S\in\Irr(\HH_{r,n})$ and $D\in\Irr(\HH_{r,n}^K)$ and
let $s=|G_S|$ and $d=|G_D|$. Then
$$
[S_{s,i}\!\uparrow^{\HH_{r,n}\rtimes\Z_p}_{\HH_{r,n}\rtimes G_S}:%
               D_{d,j}\!\uparrow^{\HH_{r,n}\rtimes\Z_p}_{\HH_{r,n}\rtimes G_D}]
  =[S_{s,i+1}\!\uparrow^{\HH_{r,n}\rtimes\Z_p}_{\HH_{r,n}\rtimes G_S}:%
               D_{d,j+1}\!\uparrow^{\HH_{r,n}\rtimes\Z_p}_{\HH_{r,n}\rtimes G_D}],
$$
for all $i,j\in\Z$. That is, $(S,D)$ has cyclic decomposition numbers.
\end{cor}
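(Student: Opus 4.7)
The plan is to reduce the statement, via the Morita equivalence $F$ of Lemma~\ref{simple Morita}, to an assertion about ordinary decomposition numbers of the algebras $\HH_{r,p,n}^F$ and $\HH_{r,p,n}^K$, and then exploit the fact that the automorphism $\tau$ commutes with modular reduction.

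First I would transport both sides of the desired equality through $F$. By Lemma~\ref{satis}, $F\bigl(S_{s,i}\ind^{\HH_{r,n}^F\rtimes\Z_p}_{\HH_{r,n}^F\rtimes G_S}\bigr)\cong S_i$, the $i$th summand of $S\res^{\HH_{r,n}^F}_{\HH_{r,p,n}^F}$ appearing in Proposition~\ref{cyclic}, and similarly $F\bigl(D_{d,j}\ind^{\HH_{r,n}^K\rtimes\Z_p}_{\HH_{r,n}^K\rtimes G_D}\bigr)\cong D_j$. Lemma~\ref{simple Morita} produces a Morita equivalence compatible with the modular system $(F,\O,K)$, since $\sigma$ lifts to an automorphism of $\HH_{r,n}^\O$ of order~$p$ (as pointed out in the introduction). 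Consequently this Morita equivalence preserves composition multiplicities of modular reductions, and one obtains
$$[S_{s,i}\ind^{\HH_{r,n}^F\rtimes\Z_p}_{\HH_{r,n}^F\rtimes G_S}:D_{d,j}\ind^{\HH_{r,n}^K\rtimes\Z_p}_{\HH_{r,n}^K\rtimes G_D}] = [S_i:D_j],$$
the right-hand side being the ordinary decomposition number of $\HH_{r,p,n}$ from $\HH_{r,p,n}^F$-Mod to $\HH_{r,p,n}^K$-Mod.

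Next I would invoke Proposition~\ref{cyclic}, which gives $S_{i+1}\cong S_i^\tau$ as $\HH_{r,p,n}^F$-modules and $D_{j+1}\cong D_j^\tau$ as $\HH_{r,p,n}^K$-modules. Twisting by~$\tau$ is a self-equivalence of $\HH_{r,p,n}^R$-Mod for $R\in\{F,K\}$, and because $\tau$ commutes with modular reduction (as observed just before Definition~\ref{split}), twisting preserves decomposition numbers. Therefore
$$[S_i:D_j]=[S_i^\tau:D_j^\tau]=[S_{i+1}:D_{j+1}].$$
Combining this with the previous identity applied to both $(i,j)$ and $(i+1,j+1)$ yields the desired equality, proving that $(S,D)$ has cyclic decomposition numbers.

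The main obstacle, and really the only subtle point, is ensuring that the Morita equivalence $F$ and the twist by~$\tau$ both interact cleanly with modular reduction. Both issues reduce to the already-noted compatibility of $\sigma$ and $\tau$ with the lift of $\HH_{r,n}$ to $\HH_{r,n}^\O$, so there should be no new technical difficulty beyond bookkeeping the isomorphisms given by Lemma~\ref{satis} and Proposition~\ref{cyclic}.
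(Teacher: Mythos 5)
Your proof is correct and follows essentially the same route as the paper's: apply Lemma~\ref{satis} to transport the decomposition number of induced modules through the Morita equivalence $F$ to $[S_i:D_j]$, then use $S_{i+1}\cong S_i^\tau$ and $D_{j+1}\cong D_j^\tau$ from Proposition~\ref{cyclic} together with $\tau$-twisting preserving decomposition numbers. The paper's own argument is a compressed chain of equalities invoking exactly these ingredients; you have merely made explicit the compatibility with the modular system that the paper leaves implicit.
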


\begin{proof}Using Lemma~\ref{satis} and Proposition~\ref{cyclic} we
have
\begin{align*}
[S_{s,i+1}\!\uparrow^{\HH_{r,n}\rtimes\Z_p}_{\HH_{r,n}\rtimes G_S}:%
               D_{d,j+1}\!\uparrow^{\HH_{r,n}\rtimes\Z_p}_{\HH_{r,n}\rtimes G_D}]
  &=[S_{i+1}:D_{j+1}]
   =[S_i^\tau:D_j^\tau]
   =[S_i:D_j]\\
  &=[S_{s,i}\!\uparrow^{\HH_{r,n}\rtimes\Z_p}_{\HH_{r,n}\rtimes G_S}:%
               D_{d,j}\!\uparrow^{\HH_{r,n}\rtimes\Z_p}_{\HH_{r,n}\rtimes G_D}],
\end{align*}
as required.
\end{proof}

Note that Proposition~\ref{cyclic} and Lemma~\ref{satis} also imply that
the two notions of $p$--splittable decomposition numbers for the algebras
$\HH_{r,p,n}$ coincide.

\begin{cor}\label{H-splittable}
    Suppose that $S\in\Irr(\HH_{r,n}^F)$ and $D\in\Irr(\HH_{r,n}^K)$ and
    let $s=|G_S|$ and $d=|G_D|$. Fix integers $i,j\in\Z$. Then the
    decomposition number
    $$[S_{s,i}\ind_{\HH_{r,n}^F\rtimes G_{S}}^{\HH_{r,n}^F\rtimes\Z_p}:%
       D_{d,j}\ind_{\HH_{r,n}^K\rtimes G_{D}}^{\HH_{r,n}^K\rtimes\Z_p}]$$
   is $p$--splittable in the sense of Definition~$\ref{split3}$ if and only
   if the decomposition number
   $$[F(S_{s,i}\ind_{\HH_{r,n}^F\rtimes G_{S}}^{\HH_{r,n}^F\rtimes\Z_p}):%
       F(D_{d,j}\ind_{\HH_{r,n}^K\rtimes G_{D}}^{\HH_{r,n}^K\rtimes\Z_p})]$$
       is $p$--splittable in the sense of Definition~$\ref{split}$.
\end{cor}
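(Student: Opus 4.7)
The plan is to unfold each notion of $p$-splittability in terms of the structural data attached to the simple $\HH_{r,n}^R$-modules $S$ and $D$, and observe they impose exactly the same condition. The bridge between the two definitions is Proposition~\ref{cyclic}, which relates the $\sigma$-inertia group of a simple $\HH_{r,n}^R$-module to the $\tau$-inertia group of its $\HH_{r,p,n}^R$-constituents.

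First, by Lemma~\ref{satis} applied to $S$ (over $F$) and $D$ (over $K$), the Morita image $F(S_{s,i}\ind_{\HH_{r,n}^F\rtimes G_{S}}^{\HH_{r,n}^F\rtimes\Z_p})$ is isomorphic to the simple $\HH_{r,p,n}^F$-submodule $S_i$ of $S$, and similarly $F(D_{d,j}\ind_{\HH_{r,n}^K\rtimes G_{D}}^{\HH_{r,n}^K\rtimes\Z_p})\cong D_j$. Therefore the decomposition number on the right-hand side is $p$-splittable in the sense of Definition~\ref{split} if and only if $G_{S_i}=\{0\}=G_{D_j}$ with respect to the $\tau$-action.

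Next, apply Proposition~\ref{cyclic} to $S$: if $k>0$ is minimal with $S\cong S^{\sigma^k}$ and $l=p/k$, then $|G_S|=l$ and $l$ is also the smallest positive integer with $S_0\cong S_0^{\tau^l}$. Since $S_{i+m}\cong S_i^{\tau^m}$ by the same proposition, every $S_i$ has $\tau$-inertia group of order $p/l=k$. Hence $G_{S_i}=\{0\}$ if and only if $k=1$, i.e.\ if and only if $S\cong S^\sigma$, i.e.\ if and only if $G_S=\Z_p$; the same reasoning applies to $D$. Combining the two observations, the right-hand decomposition number is $p$-splittable in the sense of Definition~\ref{split} exactly when $G_S=\Z_p=G_D$, which is Definition~\ref{split3}. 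There is no real obstacle here; the only bookkeeping point is the duality $kl=p$ between the two indexing conventions, which is precisely what Proposition~\ref{cyclic} provides.
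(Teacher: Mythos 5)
Your proof is correct and follows exactly the route the paper intends: the paper states the corollary without proof, merely noting that it follows from Proposition~\ref{cyclic} and Lemma~\ref{satis}, and your argument is precisely the unwinding of those two results. You correctly use Lemma~\ref{satis} to identify $F(S_{s,i}\ind)$ with $S_i$ and $F(D_{d,j}\ind)$ with $D_j$, then invoke the $k\cdot l=p$ duality from Proposition~\ref{cyclic} to translate $G_S=\Z_p$ (the $\sigma$-inertia condition of Definition~\ref{split3}) into $|G_{S_i}|=p/l=k=1$, i.e.\ $G_{S_i}=\{0\}$ (the $\tau$-inertia condition of Definition~\ref{split}), and symmetrically for $D$.
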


\begin{thm}\label{lastthm}
Then every decomposition number of
$\HH_{r,p,n}(\bQ)$ is equal to a sum of $p'$--splittable
decomposition number of some cyclotomic Hecke algebra
$\HH_{r,p',n}(q,{\bQ}')$, where $p=kp'$ and
$$\bQ'=(Q_1,\eps Q_1,\dots,\eps^{k-1}Q_1,Q_2,\dots,\eps^{k-1}Q_2,\dots,
 Q_t,\dots,\eps^{k-1}Q_t).$$
\end{thm}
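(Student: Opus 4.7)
The plan is to combine the Morita equivalence of Lemma~\ref{simple Morita} with the splitting result of Corollary~\ref{splittable}, and then identify the resulting subalgebra of $\HH_{r,n}$ with $\HH_{r,p',n}(\bQ')$. Fix a decomposition number $[M:L]$ of $\HH_{r,p,n}(\bQ)$ with $M\in\Irr(\HH^F_{r,p,n})$ and $L\in\Irr(\HH^K_{r,p,n})$. By Lemma~\ref{simple Morita} (and its analogue over $F$), together with the Clifford-theoretic classification of irreducible $\HH_{r,n}\rtimes\Z_p$--modules recalled in Section~4, there exist $S\in\Irr(\HH^F_{r,n})$, $D\in\Irr(\HH^K_{r,n})$ and integers $i,j$ such that under the Morita equivalence $M$ corresponds to $S_{s,i}\ind^{\HH^F_{r,n}\rtimes\Z_p}_{\HH^F_{r,n}\rtimes G_S}$ and $L$ corresponds to $D_{d,j}\ind^{\HH^K_{r,n}\rtimes\Z_p}_{\HH^K_{r,n}\rtimes G_D}$, where $s=|G_S|$ and $d=|G_D|$, and $[M:L]$ equals the corresponding decomposition number on the semidirect-product side.

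By Corollary~\ref{verifycyclic}, the pair $(S,D)$ has cyclic decomposition numbers, so Corollary~\ref{splittable} applies. Setting $G_0=G_S\cap G_D$ and $p'=|G_0|$, it gives
$$[M:L]=\sum_{1\le a\le p/d}\bigl[S_{s,i}\res^{\HH^F_{r,n}\rtimes G_S}_{\HH^F_{r,n}\rtimes G_0}:D^{\sigma^a}_{d,j}\res^{\HH^K_{r,n}\rtimes G_D}_{\HH^K_{r,n}\rtimes G_0}\bigr],$$
with every summand $p'$-splittable in the sense of Definition~\ref{split3}. It remains to translate each such summand into a $p'$-splittable decomposition number (in the sense of Definition~\ref{split}) of $\HH_{r,p',n}(\bQ')$.

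Write $p=kp'$, so that $G_0=\langle\sigma^k\rangle$ and $\eps':=\eps^k$ is a primitive $p'$-th root of unity. The Ariki--Koike algebra $\HH_{r,n}(\bQ')$ with the $\bQ'$ of the statement has parameter multiset $\{\eps^c Q_j:0\le c<p,\,1\le j\le t\}$, which coincides with that of $\HH_{r,n}(\bQ)$; under this identification the order-$p'$ automorphism of $\HH_{r,n}(\bQ')$ from Definition~\ref{sigma tau} is exactly $\sigma^k$. Hence the fixed-point subalgebra of $\HH_{r,n}$ under $G_0$ is precisely $\HH_{r,p',n}(\bQ')$, and the proof of Lemma~\ref{simple Morita} (applied with $p'$ and $G_0$ in place of $p$ and $\Z_p$) shows that $\HH_{r,p',n}(\bQ')$ is Morita equivalent to $\HH_{r,n}\rtimes G_0$. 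The analogue of Corollary~\ref{H-splittable} for $p'$ then converts each summand above into a $p'$-splittable decomposition number of $\HH_{r,p',n}(\bQ')$, completing the proof. The main obstacle is this last identification: verifying that $\sigma^k$ acting on $\HH_{r,n}(\bQ)$ coincides with the order-$p'$ automorphism of $\HH_{r,n}(\bQ')$ so that the fixed-point subalgebra is $\HH_{r,p',n}(\bQ')$ with the prescribed parameters; once this is in hand, the remaining steps are formal applications of results already established in Sections~4 and~5.
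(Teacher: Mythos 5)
Your proposal is correct and follows essentially the same route as the paper: translate to the $\HH_{r,n}\rtimes\Z_p$ side via Lemma~\ref{simple Morita} and Lemma~\ref{satis}, apply Corollary~\ref{verifycyclic} and Corollary~\ref{splittable} to express $[S_i:D_j]$ as a sum of $p'$-splittable decomposition numbers over $\HH_{r,n}\rtimes G_0$, and then identify $\HH_{r,n}\rtimes\Z_{p'}$ up to Morita equivalence with $\HH_{r,p',n}(\bQ')$. The only cosmetic difference is in the last identification step: the paper factorizes the order relation $T_0^p-Q_b^p=\prod_{a=0}^{k-1}(T_0^{p'}-(\eps^a Q_b)^{p'})$ and then invokes Lemma~\ref{simple Morita}, whereas you compare the parameter multisets of $\HH_{r,n}(\bQ)$ and $\HH_{r,n}(\bQ')$ directly and check that $\sigma^k$ coincides with the order-$p'$ automorphism of $\HH_{r,n}(\bQ')$; these are two ways of saying the same thing, and both are correct.
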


\begin{proof}
By Proposition~\ref{cyclic} every irreducible $\HH_{r,p,n}^F$-module
is equal to $S_i$ for some $S\in\Irr(\HH_{r,n}^F)$ and with $1\le i\le s=|G_S|$.
Similarly, every irreducible $\HH_{r,p,n}^K$-module is equal to $D_j$ for some
$D\in\Irr(\HH_{r,n}^K)$ and with $1\le j\le s=|G_D|$. Therefore, by
Lemma \ref{satis}, Corollary \ref{verifycyclic} and Corollary \ref{splittable},
$$[S_i:D_j]=\sum_{1\leq a\leq\frac pd}[S_{s,i}\res^{\HH_{r,n}^F\rtimes G_S}_{\HH_{r,n}^F\rtimes G_0}:%
            D^{\theta^a}_{d,j}\res^{\HH_{r,n}^F\rtimes G_D}_{\HH_{r,n}^K\rtimes G_0}],
$$
where $G_0=G_S\cap G_D$. Suppose that $G_0=\langle\sigma^k\rangle$
and write $p=kp'$.  Then we have shown that $[S_i:D_j]$ is a sum of
$p'$--splittable decomposition number of $\HH_{r,n}\rtimes\Z_{p'}$.

As at the beginning of section~2, write $r=pt$. Then $r=p'kt$ and in
$\HH_{r,p,n}(\bQ)$ the `order relation' for $T_0$ is
$$ 0=\prod_{b=1}^{t}\big(T_0^p-Q_b^p\big)
    =\prod_{b=1}^{t}\prod_{a=0}^{k-1}\big(T_0^{p'}-(\eps^{a}Q_b)^{p'}\big).$$
Observe that the right hand side is the `order relation' for
$T_0^{p'}$ in $\HH_{r,p',n}(\bQ')$. It now follows using
Lemma~\ref{simple Morita} that $\HH_{r,n}\rtimes\Z_{p'}$ is Morita
equivalent to $\HH_{r,p',n}(q,{\bQ}')$, where the parameters $\bQ'$
are as given in the statement of the theorem. This completes the
proof of the theorem.
\end{proof}
\medskip

\begin{proof}[\bf Proof of Theorem~\ref{main2}] This follows from an
recursive application of Theorem~\ref{main1},
Corollary~\ref{orbitreduction}, Corollary \ref{verifycyclic} and Theorem~\ref{lastthm}.
\end{proof}

Theorem~\ref{main2} gives a recursive algorithm for computing all of
the decomposition numbers of a cyclotomic Hecke algebra
$\HH_{r,p,n}(\bQ)$ in terms of the $p'$-splittable decomposition
numbers of a family of ``smaller'' cyclotomic Hecke algebras
$\HH_{r',p',n'}(\bQ')$, where $1\le r'\leq r$, $1\le n'\leq n$, $1\le
p'\mid p$, and the parameters $\bQ'$ are contained in a single
$(\varepsilon',q)$-orbit of $\bQ$, where $\varepsilon'$ is a primitive
$p'$th root of unity. Therefore, the $p'$-splittable
decomposition numbers of the cyclotomic Hecke algebras of type
$G(r',p',n')$ completely determine the decomposition numbers of all
cyclotomic Hecke algebras $\HH_{r,p,n}(\bQ)$.

\section*{Acknowledgments}

\thanks{The first author was supported partly by the National Natural
Science Foundation of China (Project 10771014) and the Program NCET.  He
wishes to thank the School of Mathematics and Statistics at University of
Sydney for their hospitality during his visit in 2006.  Both authors were
supported, in part, by an Australian Research Council discovery grant.}


\end{document}